\title{Learning high-order spatial discretisations of PDEs with symmetry-preserving iterative algorithms} 
\author{J.~E. Bunder\thanks{\url{mailto:judith.bunder@adelaide.edu.au}, {\sc orcid:}0000-0001-5355-2288} \quad and \quad A.~J. Roberts\thanks{\url{mailto:profajroberts@protonmail.com}, {\sc orcid:}0000-0001-8930-1552 }
\\School of Mathematical Sciences,
\\University of Adelaide, South Australia.
}
\date{\today}
\pgfplotsset{compat=newest}
\let\LTXappendix\appendix
\def\appendix{\printbibliography%
    \gdef\printbibliography{}%
    \clearpage\LTXappendix}
\renewcommand{\vec}[1]{\text{\boldmath\(#1\)}}
\def\asinh{\operatorname{asinh}}
\def\fx{{\cal%
    \mathchoice{\scriptstyle X}{\scriptstyle X}%
    {\scriptscriptstyle X}{\scriptscriptstyle X}}}
\def\fu{{\cal%
    \mathchoice{\scriptstyle U}{\scriptstyle U}%
    {\scriptscriptstyle U}{\scriptscriptstyle U}}}
\def\tu{\ensuremath{\tilde u}}
\def\tf{\ensuremath{\tilde f}}
\def\pded{\cref{eqpdesd}}
\def\rhs{\textsc{rhs}}
\def\lhs{\textsc{lhs}}
\def\cmt{\textsc{cmt}}
\begin{document}

\maketitle



\begin{abstract}
Common techniques for the spatial discretisation of \pde{}s on a macroscale grid include finite difference, finite elements and finite volume methods. 
Such methods typically impose assumed microscale structures on the subgrid fields, so without further tailored analysis are not suitable for systems with subgrid-scale heterogeneity or nonlinearities. 
We provide a new algebraic route to systematically approximate, in principle exactly, the macroscale closure of the spatially-discrete dynamics of a general class of heterogeneous non-autonomous reaction-advection-diffusion \pde{}s.
This holistic discretisation approach, developed through rigorous theory and verified with computer algebra, systematically constructs discrete macroscale models through physics informed by the \pde\ out-of-equilibrium dynamics, thus relaxing many assumptions regarding the subgrid structure. 
The construction is analogous to recent gray-box machine learning techniques in that predictions are directed by iterative layers (as in neural networks), but informed by the subgrid physics (or `data') as expressed in the \pde{}s. 
A major development of the holistic methodology, presented herein, is novel inter-element coupling between subgrid fields which preserve self-adjointness of the \pde{} after macroscale discretisation, thereby maintaining the spectral structure of the original system.
This holistic methodology also encompasses homogenisation of microscale heterogeneous systems, as shown here with the canonical examples of heterogeneous 1D waves and diffusion.


\end{abstract}

\tableofcontents

\section{Introduction}

The observable macroscale dynamics of a multiscale system are typically driven by coherent structures governed by microscopic agents, such as electrons, molecules, or individuals in a population.
While such systems are often well-understood at the microscale, large-scale simulations of the full microscale model are usually infeasible, even on the largest high-performance computers, and in particular for nonlinear system with fluctuations that persist and interact across multiple scales \cite[]{Nasa2018}.
Multiscale modelling aims to overcome this issue by efficiently approximating over the multiple scales to provide an accurate numerical model of the coherent dynamics of the system at the macroscale of interest.
Numerous computational schemes have been developed for a wide variety of multiscale systems, from composite materials~\cite[]{Dutra2020} to  plasmas~\cite[]{Shohet2020}. 
However, many of these computational schemes are developed for specific systems and are not easily adaptable to other cases.
Furthermore, supporting theory is often only rigorously applicable in the limiting case of infinite scale separation (typically characterised as \(\epsilon\rightarrow 0\)).  
Here we further develop the holistic discretisation methodology for multiscale nonlinear systems---a general purpose approach---supported by analytical theory and straightforwardly adaptable to a wide range of spatio-temporal systems at finite scale separation.


To determine a macroscale description of a multiscale system with microscale heterogeneity we seek an homogenisation, or `average', over the microscale which accounts for the heterogeneity without retaining unnecessary fine-scale details~\cite[]{Geers2107}. 
For example, for composite materials with periodic microstructures, which are common in nature and are increasingly synthesised for novel industrial applications \cite[e.g.,][]{NematNasser2011,Bargmann2018},  asymptotic homogenisation constructs a power series in scale separation~\(\epsilon\) with coefficients  dependent on a periodic `representative volume element' (e.g., a unit cell) which describes the microscale heterogeneity \cite[]{Dutra2020,Feppon2020}.
However, homogenisation often requires substantial user input in the form of an algebraic analysis of the given microscale system prior to numerical implementation, and theoretical support is often only guaranteed in unphysical circumstances; for example, typically one needs to explicitly identify `fast' and `slow' variables, and also assume an unphysical infinite scale separation between the two  \cite[e.g.,][]{Engquist08}.
In contrast, the holistic homogenisation methodology, detailed in \cref{seccte}, requires little or no such user input and generally permits an immediate application of the numerical scheme.
It is thus more adaptable and accessible than many other common homogenisation methods. 
Theoretical support for this methodology is provided by centre manifold theory, as discussed in \cref{seclin},  and by the consistency analysis of \cref{secphoc1}, and is broadly applicable to many physical scenarios. 

The holistic homogenisation methodology was originally developed by \citet[e.g.,][]{Roberts98a}, and has been applied to several different systems \cite[e.g.,][]{Roberts00a, MacKenzie09b, Roberts2011a, Jarrad2016a}.
The fundamental idea is to partition the domain of the original system into \(N\)~disjoint elements, with these \(N\)~elements defining a macroscale grid, and then to discover appropriate microscale fields that are slaved to a useful set of macroscale variables. 
In this article, the important new development is the preservation of self-adjointness by appropriate design of the coupling between the \(N\)~elements, as defined in \cref{seccte}, which ensures that fundamental dynamical features of the multiscale system (specified by the eigenstructure) are maintained---especially for homogenisation as proved in \cref{secpsa}.  
To illustrate the adaptability of this holistic methodology, \cref{sechhd} considers a \pde\ with periodic heterogeneous diffusion.
The problem is firstly embedded in a family of general heterogeneous diffusion \pde{}s, which rephrases the heterogeneity as solely dependent on a `phase' coordinate, where the periodic phase domain is reminiscent of the representative volume element commonly utilised in the asymptotic homogenisation of periodic microstructures.
This embedding empowers a straightforward application of the numerical holistic methodology.


In this article, to establish the new homogenisation methodology we mostly consider the spatio-temporal dynamics of a field~\(u(t,x)\) satisfying non-autonomous reaction-advection-diffusion \pde{}s in the general form
\begin{equation}
u_t=-f(x,u,u_x)_x+\alpha g(t,x,u,u_x)\,,
\label{eqgenpde}
\end{equation}
where subscripts \(x\) and~\(t\) denote spatial and temporal derivatives, respectively, and functions \(f\)~and~\(g\) are characterised by~\cref{assFandG}.
The homogenisation derives accurate macroscale spatial discretisations of these nonlinear \pde{}s on a specified macroscale grid.
The homogenisation accounts for subgrid (i.e., microscale) dynamics and symmetries, and hence automatically leads to stable discretisations, assuming stability of the original system.
We also establish that the same approach generates accurate and stable spatial discretisations of 1D first-order wave \pde{}s (\cref{sechdudwpde}) and 1D second-order nonlinear wave \pde{}s obtained by replacing~\(u_t\) in~\cref{eqgenpde} by~\(u_{tt}\) (\cref{secsmwpde}).

With some adaptations, this methodology is also suitable to multi-dimensional space \pde{}s, \(u_t=-\divv\fv(\xv,u,\grad u)+\alpha g(t,\xv,u,\grad u)\)\,, and is related to the multidimensional homogenisation (albeit non-self-adjoint) of  \cite{Roberts2011a}, but we leave multi-dimensional space for future research and here we focus the case \text{of 1D space.}

\begin{assumption}\label{assFandG}
Firstly, functions~\(f\) and~\(g\) on the right-hand side of \pde{}~\cref{eqgenpde} are to be smooth functions of their arguments, and the function~\(g\) varies relatively slowly in time~\(t\).
Secondly, in the cases of second-order \pde{}s, the function~\(f\), called the \emph{flux}, is strictly monotonic decreasing in~\(u_x\): that is, for some positive~\(\nu\),
\(\D{u_x}f\leq-\nu<0\) with \(f(x,u,0)=0\)\,.
Lastly, for strict support by existing theory, we assume \(f\)~and~\(g\) are such that the \pde~\cref{eqgenpde}, with `edge conditions'~\cref{eqsiecc}, satisfies the requirements of the invariant manifold theory by \cite{Hochs2019}.
\end{assumption} 

We define \emph{smooth} (\cref{assFandG}) to mean either infinitely differentiable,~\(C^\infty\), or differentiable to an order~\(p\) sufficient for the purposes at hand,~\(C^p\).
This smoothness requirement still permits microscale heterogeneity where the functions~\(f\) and~\(g\) vary significantly on a spatial scale of the order of a microscale length~\(d\), such as for heterogeneous diffusion (\cref{sechhd}).

We denote the 1D spatial domain of the \pde~\cref{eqgenpde} by~\XX, which in examples is often over the interval \((0,\fL)\subset\RR\)\,, and consider spatial dependence of field~\(u\) in the Hilbert space~\HH\ of square integrable, twice differentiable, functions on~\XX, and often in the subspace~\LL\ of all \(\fL\)-periodic functions in~\HH\ (i.e., we often invoke periodic boundary conditions). 
We partition the domain~\XX\ into \(N\)~disjoint open interval elements~\(\XX_j\) for \(j\in\JJ:=\{1,\ldots,N\}\) and define \(\tilde\XX:=\cup_{j\in\JJ} \XX_{j}\)\,. 
Let~\(u_j(t,x)\) denote subgrid fields which satisfy the \pde~\cref{eqgenpde} in~\(\XX_j\): that is,
\(u_{jt}=-f(x,u_j,u_{jx})_x+\alpha g(t,x,u_j,u_{jx})\),
for \(x\in\XX_j\)\,.
Define macroscale, coarse, `grid' values 
\begin{equation}
U_j(t):=\cU\big[ u_j(t,x)\big] \quad\text{for every } j\in\JJ,
\label{eqgridval}
\end{equation}
for some chosen projection \(\cU: C(\XX_j)\to\RR\)\,, a projection that is some measure of the amplitude of the field~\(u_j\) in the \(j\)th~element.\footnote{The precise form of~\cU\ is largely immaterial because the evolution of states in the slow subspace is independent of how we choose to parametrise the subspace \cite[Lemma~5.1]{Roberts2014a}.  Hence the macroscale projection~\cU\ may be subjectively chosen according to any reasonable measure of the subgrid field~\(u_j\) within each element~\(\XX_j\).}
Consequently, a spatial discretisation of the \pde~\cref{eqgenpde} is a \emph{closed} set of \ode{}s for the vector of macroscale grid variables \(\Uv:=(U_1,\ldots,U_N)\) in the form 
\begin{equation}
\de t\Uv=\Gv(t,\Uv).
\label{eqcsd}
\end{equation}
Four examples discussed herein are the \ode\ systems~\cref{eq:schromacro,eqgopdiff,eqgopwave1,eqgophetdiff} modelling, respectively, nonlinear wave modulation, spatial diffusion, progressive waves, and homogenised heterogeneous diffusion.
\cref{seclin} uses centre manifold theory (\cmt) \cite[e.g.,][]{Carr81, Haragus2011, Hochs2019} to do three things in support of such a macroscale evolution~\cref{eqcsd}. 
Firstly, we develop further an approach to proving that in principle there exists an \emph{exact} macroscale closure~\cref{eqcsd} to the dynamics of the \pde~\cref{eqgenpde}. 
Secondly, \cmt\ establishes that such a closure is emergent from general initial conditions. 
Thirdly, \cmt\ justifies constructing new systematic approximations to the in-principle closure by learning subgrid field corrections from the \pde~\cref{eqgenpde}.

Traditional spatial discretisations of \pde{}s~\cref{eqgenpde}, whether finite difference, finite element, or finite volume, impose a set of assumed subgrid fields within each element and then derive approximate rules for the macroscale evolution of the parameters of the imposed fit \cite[e.g.,][]{Efendiev2004, Geers2010}.
The accuracy of this modelling by discretisation, particularly for nonlinear systems, is strongly dependent on the class of imposed subgrid microstructure and its ability to capture the microscale dynamics which emerge and persist across the macroscale \cite[e.g.,][]{Aarnes2007, Zhang2020}. 
Our dynamical systems (holistic) approach is to \emph{learn the algebraic} subgrid fields from the \pde~\cref{eqgenpde}.
Using more general subgrid structures is cognate to the aims of the so-called Generalised\slash Extended Finite Element Methods \cite[e.g.,][]{Turner2011b}, but crucially we let the \pde{} determine the subgrid structures from the powerful framework of centre manifolds in dynamical systems.
The so-called stabilized scheme \cite[e.g.,][]{Hughes95} appears analogous to the first step of our construction in \cref{seclin}.
We learn improved subgrid fields via computer algebra using the governing \pde~\cref{eqgenpde}, with second and further iterations providing higher-order terms.  
The resultant holistic discretisation is both accurate and flexible---it naturally captures fine-scale structures which persist across multiple scales, and it does so to a chosen order of accuracy that is routinely achievable via iteration.

A previous alternative holistic approach learns subgrid fields by systematically refining a piecewise constant initial approximation \cite[e.g.,][]{Roberts98a, Roberts00a, Roberts2011a}---an approach that adapts to the multi-scale gap-tooth scheme \cite[e.g.,][]{Roberts06d, Kevrekidis09a}.
Our new approach extends and complements previous schemes by systematically preserving self-adjoint symmetries in the learnt macroscale closure.

\begin{example}[\textsc{nls}]\label{ex:nls}
As an introductory nontrivial example, consider the nonlinear 1D Schr\"odinger equation (\textsc{nls}) governing a complex valued field~\(u(t,x)\):
\begin{equation}
\i\partial_t u=-\tfrac{1}{2}\partial_x^2 u+\alpha|u|^2u\,,\label{eq:schrodinger}
\end{equation}
with periodic boundary conditions on the spatial domain~\([-\pi,\pi]\).
The real and imaginary parts of the field~\(u\) typically evolve to be out-of-phase and each drives persistent oscillations in the other.  
Common applications include the modulation of light propagating in nonlinear optical fibres and planar waveguides~\cite[e.g.]{Kibler2012}.
The nonlinear wave nature of this example tests our discretisation scheme.

We discretise space into \(N\)~elements~\(\XX_j\) of uniform width~\(H\) centred about grid points~\(X_j\).
Then, defining macroscale complex variables \(U_{j}(t):=u_j(t,X_j)\) (i.e., here the projection~\(\cU\) of~\cref{eqgridval} is the subgrid field~\(u_j\) at the midpoint~\(X_j\) of the \(j\)th~element), our holistic homogenisation scheme iteratively derives a macroscale discretisation of~\cref{eq:schrodinger}.
The details of the algebraic machinations of the iteration are not important here---computer algebra like that of \cref{sechochd,sechochwave1,sechdhdc} handles the details.
The result here is that the macroscale discretisation is, with asterisk superscript denoting complex conjugation,
\begin{align}
\i\partial_t U_j&=\alpha|U_j|^2U_j 
-\gamma^2\tfrac{1}{8H^2}(U_{j+2}+U_{j-2}-2U_j)
\nonumber\\&\quad{}
+\alpha\gamma^2\tfrac{1}{96}\big[2|U_j|^2(U_{j+2}+U_{j-2}-4U_j)
\nonumber\\&\qquad{}
-4|U_{j+2}|^2(U_{j+2}-2U_j)-4|U_{j-2}|^2(U_{j-2}-2U_j)
\nonumber\\&\qquad{}
-4U_j^*(U_{j+1}^2+U_{j-1}^2-U_{j+1}U_{j-1})-4U_j(U_{j+1}^*U_{j-1}+U_{j-1}^*U_{j+1})
\nonumber\\&\qquad{}
+U_{j+2}^*(U_j^2-2U_{j+1}^2)+U_{j-2}^*(U_j^2-2U_{j-1}^2)\big]
+\Ord{\alpha^2,\gamma^3},
\label{eq:schromacro}
\end{align}
where \(\gamma\) is a coupling order parameter labelling each inter-element interaction (\cref{seccte}). 
Set \(\gamma=1\) to obtain the full inter-element coupling needed to model the \pde{}, and then the first line of the macroscale evolution~\cref{eq:schromacro} is a standard discretisation of the \textsc{nls}~\cref{eq:schrodinger} with the spatial derivatives over a step of~\(2H\).
The subsequent terms in~\cref{eq:schromacro} are the leading terms due to subgrid physical interactions, terms learnt by the algebraic methodology developed \text{in this article.}

\begin{figure}
\includegraphics[width=\textwidth]{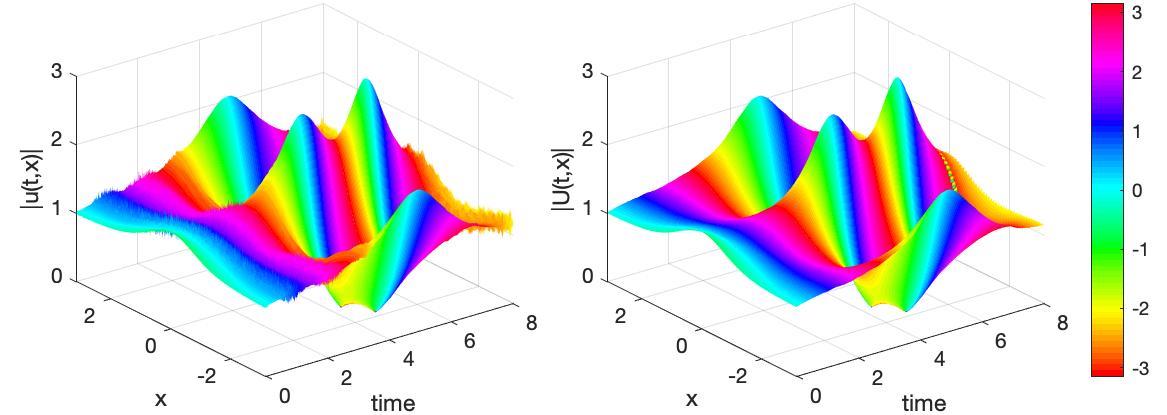} 
\caption{Simulations of the \textsc{nls} \pde\ compare predictions of field magnitudes: (left)~\(|u(t,x)|\) computed using a fine-scale discretisation of~\cref{eq:schrodinger}; and (right)~\(|U(t,x)|\) computed using the coarse scale holistic discretisation~\cref{eq:schromacro}.
The surfaces are coloured according to the arguments of the complex fields, as indicated by the side-bar. 
}
\label{fig:schrosim} 
\end{figure}%
For an example simulation we choose the initial condition \(u(0,x)=1-\i\operatorname{sech}x\) which produces Kuznetsov--Ma breather solutions of nonlinear localised oscillating peaks on a non-zero background \citep{Zhao2017}.
Breathers and solitons arise in nonlinear \pde\ when there is a balance between the dispersion and the nonlinearity.
\cref{fig:schrosim} illustrates the success of the holistic homogenisation by comparing a microscale simulation of the \textsc{nls}~\cref{eq:schrodinger} (left plot) with the evolution of the macroscale discretisation~\cref{eq:schromacro} truncated to errors~\(\Ord{\alpha^2,\gamma^4}\) (right).
The microscale simulation of the \textsc{nls} employs a spatial grid of \(3300\)~points with spacing \(h=0.0019\), whereas the holistic scheme discretises the domain with \(N=151\) elements of width \(H=0.042\)\,, which is over twenty times larger and correspondingly less stiff, and so much quicker computationally.
At all times we observe fine-scale simulation errors (via Matlab's \textsc{ode} solver \texttt{ode113}) which do not appear in the simulation with the holistic discretisation~\cref{eq:schromacro} (via a fourth order Runge--Kutta).\footnote{Matlab's \texttt{ode113} was the only Matlab \textsc{ode} solver which could simulate the fine-scale model~\eqref{eq:schrodinger} and provide reasonable accuracy (time taken: {8\,564\,s}). 
In contrast, Matlab \textsc{ode} solvers had no trouble accurately simulating the holistic discretisation~\eqref{eq:schromacro} (e.g., \texttt{ode113} time was~{246\,s}), but fourth order Runge--Kutta was substantially faster (time~{102\,s}).
The fourth order Runge--Kutta could not accurately simulate the  \text{fine-scale model.}} 
\end{example}

\begin{remark}\label{rem:iecc}
\emph{Inter-element coupling is crucial.}\quad
In order to construct an accurate macroscale discretisation the nature of the coupling between adjacent elements~\(\XX_j\) is crucial.
For example, in the context of material homogenisation, \cite{Abdulle2020b} in their abstract comment that
\begin{quote} 
A naive treatment of these artificial boundary conditions leads to a first order error \ldots\,. This error dominates all other errors originating from the discretization of the macro and the micro problems, and its reduction is a main issue in today's engineering multiscale computations. 
\end{quote}
The ``naive treatment'' in this comment corresponds to a low-order approximation in the element coupling order parameter~\(\gamma\), but with our iterative algorithm higher order approximations are straightforward.
The problematic ``boundary conditions'' in the comment we term the \emph{edge conditions} (or coupling conditions)---it is these edge conditions that define the coupling of elements~\(\XX_j\) to form the spatially complete system.
\cref{seccte} develops inter-element edge conditions that in a variety of scenarios (e.g., \textsc{nls} of \cref{ex:nls}) provably preserve self-adjointness (\cref{secpsa}), and have provable accuracy (\cref{sechdudwpde,Xsecpchdd}). 
Thus this article develops a valuable approach to addressing a key ``issue in today's engineering multiscale computations''.
\end{remark}

\begin{remark}\label{rem:ic}
\emph{Initial conditions.}\quad
For any macroscale discrete closure such as~\cref{eqcsd}, initial conditions are nontrivial.
The paradox is that, despite the definition~\cref{eqgridval} that the macroscale grid variable \(U_j(t):=\cU [u_j(t,x)]\), in order to obtain accurate forecasts over long times, generally the initial grid value \(U_j(0)\neq \cU [u_j(0,x)]\) \cite[]{Roberts89b,Roberts01a}.
In physics, \cite{vanKampen85} termed  this phenomena the ``initial slip''. 
There is, in effect, a `boundary layer' in time that accounts for non-trivial rapid transients.
Using the geometry of invariant manifolds, \cite{Roberts89b} developed an efficient general method to algebraically learn the correct initial~\(U_j(0)\) for accurate long-time forecasts by reduced dimensional models such as a discretisation of a \pde.
The method has been applied in various scenarios \cite[e.g.,][]{Roberts2014a}.
Further research is needed to form initial conditions for the discretisation framework \text{established herein.}
\end{remark}

\begin{remark}\label{rem:awml}
\emph{Analogy with machine learning.}\quad  
The iterative algorithm used to construct the holistic homogenisation of \cref{ex:nls}, and the other examples herein, was originally developed by \cite{Roberts96a}. 
At each iteration the algorithm evaluates residuals of the nonlinear~\textsc{pde} within the elements, and improves the resolution of the subgrid fields with a linear correction based upon this subgrid information.
We draw an analogy between this iteration and a machine learning algorithm where an AI learns the generic form of the macroscale evolution from many thousands of simulations \cite[e.g.,][]{GonzalezGarcia98, Frank2020,  Linot2020}, but here the AI is a `smart' grey-box which is directed by additional algebraic knowledge\slash data.
For example, \cite{BarSinai2018} imposed a conservative form on their learning of a  coarse-scale closure to Burgers' \pde, and \cite{Mercer94a} constructed stable schemes to advection-diffusion in pipes that matched the long term evolution to high order. 
In constructing the holistic discretisation, each iteration is analogous to one layer in a deep neural network: evaluating residuals is analogous to a nonlinear neurone function; and the linear corrections are analogous to using weighted linear combinations of outputs (i.e., activation functions) of one layer as the inputs of the next layer.  
The analogy requires that, at each execution, the smart neural network is directed by the holistic algorithm to specifically craft the neurones and linear weights to the problem at hand.
Being algebraic, this data which directs the algorithm encompasses all points in the state space's domain, not just sample data as in machine learning.
Consequently, we contend that mathematicians have for many decades been doing smart analogues of machine learning.
Such algebraic learning empowers the physical interpretation, validation, and verification required by modern science \cite[e.g.][]{Brenner2021}.
\end{remark}

\section{Self-adjoint preserving coupling}
\label{secsapc}

Let the 1D spatial discretisation of the general \pde~\eqref{eqgenpde} be parametrised by \(\Uv=(U_1,\ldots,U_N)\) as defined by some projection~\eqref{eqgridval} and satisfy an evolution \ode\ of the form~\cref{eqcsd}. 
The discretisation is derived from the subgrid fields~\(u_j(t,x)\) of \pde{}~\eqref{eqgenpde} over the \(N\)~disjoint elements~\(\XX_j\). 
To construct the spatial discretisation and \ode~\cref{eqcsd} we must specify~\(u_j\) at the two edges of element~\(\XX_j\). 
Specifying the fields~\(u_j\) at the edges of element~\(\XX_j\) defines an inter-element coupling.
Such coupling conveys information across space and hence engenders the macroscale dynamics.
While there are many possible coupling conditions, here we establish new coupling conditions which both preserve self-adjoint symmetry in the original microscale \pde, and also ensure the macroscale dynamics are correctly consistent to an arbitrarily high order of accuracy. 
As these new coupling conditions constrain fields on the edges of the elements~\(\XX_j\), we refer to them as \emph{edge conditions}.

We identify the right and left edge-points of the elements by writing the \(j\)th~element as the open interval \(\XX_j=(L_j,R_j)\). 
Because the elements abut, the right-edge of the \(j\)th~element is the left-edge of the \((j+1)\)th~element, \(R_j=L_{j+1}\)\,, and thus, to recover the original \pde~\eqref{eqgenpde} over the entire domain, we aim to set \(u_j(t,R_j)=u_{j+1}(t,L_{j+1})\).%
\footnote{Strictly, such edge values are the limit as~\(x\) approaches the edge of the (open) elements.}
However, to understand and theoretically support the discretisation on the elements, we parameterise a range of inter-element coupling to best control the information flow between elements. 
We introduce an real-valued inter-element coupling parameter~\(\gamma\): 
when \(\gamma=0\) the elements are uncoupled and form a base for some theory; 
when \(\gamma=1\) the elements are fully coupled to recover the original \pde\ problem~\cref{eqgenpde} for field~\(u(t,x)\).
A solution field~\(u(t,x)\) then arises as the collection of subgrid fields~\(u_j(t,x)\).
Further, we define the order parameter~\(\gamma\) so as to `label' each inter-element communication so that a term in~\(\gamma^p\) expresses a composition of interactions between each element and \(p\)~pairs of its nearest neighbour elements.
Consequently, truncating asymptotic expansions to `errors'~\ord{\gamma^{p}} naturally and automatically creates local discretisations of stencil \text{width~\((2p+1)\) in space. }

\subsection{Couple the elements}
\label{seccte}
For conciseness, denote the flux in the \(j\)th~element as \(f_j(t,x):=f(x,u_j,u_{jx})\)\,.
Further, use superscripts~\(R,L\) to denote evaluation at the corresponding right\slash left edges \(x=R_j,L_j\)\,:
for example, \(u_j^R(t):=u_j(t,R_j)\) and  \(f_j^L(t):=f_j(t,L_j)\)\,.
In addition to the inter-element coupling parameter~\(\gamma\), we introduce a second real-valued parameter~\(\theta\) that flexibly `tunes' the details of inter-element communication; typically \(|\theta|\leq1\)\,.
For example, for advective processes, \(\theta\)~controls the upwind character of the discrete model.
At every time and every element~\(j\), we couple elements with the two edge conditions
\begin{subequations}\label{eqsiecc}%
\begin{align}
(1-\tfrac12\gamma)(u_j^R-u_j^L)&=\tfrac12\gamma(u_{j+1}^L-u_{j-1}^R)+\tfrac12\gamma\theta(u_j^L+u_j^R)-\tfrac12\gamma\theta(u_{j+1}^L+u_{j-1}^R)\,,
\label{eqiecca}
\\
(1-\tfrac12\gamma)(f_j^R-f_j^L)&=\tfrac12\gamma(f_{j+1}^L-f_{j-1}^R)-\tfrac12\gamma\theta(f_j^L+f_j^R)+\tfrac12\gamma\theta(f_{j+1}^L+f_{j-1}^R).
\label{eqieccb}
\end{align}
\end{subequations}
When \(u(t,x)\) is to be \fL-periodic in space we require 
the periodicity \(u^{e}_{j+N}=u^{e}_j\) and \(f^{e}_{j+N}=f^{e}_j\) for \(e\in\{L,R\}\)\,.
\cref{secpsa} proves that these inter-element edge conditions preserve self-adjointness in the \pde~\cref{eqgenpde}.

\begin{itemize}
\item When parameter \(\gamma=0\)\,, the edge conditions~\cref{eqsiecc} reduce to
\begin{align}
u_j^R-u_j^L=0\,,\quad f_j^R-f_j^L=0\,.
\label{eqsiecc0}
\end{align}
That is, each element is uncoupled from the others when \(\gamma=0\)\,.
Consequently we find that \(\gamma=0\) forms a useful base for theory.
Denoting the element lengths by \(H_j:=R_j-L_j\)\,, the two conditions~\eqref{eqsiecc0} specify that the subgrid field~\(u_j\) is \(H_j\)-periodic.
For the forthcoming \cref{eghocdd} on the diffusion \pde, \(u_t=u_{xx}\)\,, such uncoupled elements evolve in time so that \(u_j\to{}\)constant on the cross-element diffusion time scale of~\(1/H_j^2\).
Centre manifold theory then supports the existence and emergence of an exact closure to the spatial discretisation, namely~\eqref{eqcsd}, in some finite range of~\(\gamma\) about \(\gamma=0\) (\cref{seclin}).

\item When parameter \(\gamma=1\) the edge conditions~\cref{eqsiecc} reduce to
\begin{align*}&
\tfrac12(1-\theta)(u_j^R-u_{j+1}^L)
=\tfrac12(1+\theta)(u_j^L-u_{j-1}^R),
\\&
\tfrac12(1+\theta)(f_j^R-f_{j+1}^L)
=\tfrac12(1-\theta)(f_j^L-f_{j-1}^R).
\end{align*}
In matrix notation the \(\gamma=1\) edge condition on field~\(u\) is equivalent to \(V\tilde\uv=\vec{0}\) for vector \(\tilde\uv:=(u_1^R-u_{2}^L,\ldots,u_N^R-u_{1}^L)\), and for \(N\times N\) circulant matrix
\begin{equation*}
V:=\begin{bmatrix}
\tfrac{1}{2}(1-\theta) & 0 & \cdots & 0 & \tfrac{1}{2}(1+\theta)\\
\tfrac{1}{2}(1+\theta) & \ddots &  &  \ddots & 0\\
0 & \ddots  & \ddots & \ddots & \vdots \\
\vdots &  \ddots & \ddots &\ddots &0\\
 0& \cdots& 0 &  \tfrac{1}{2}(1+\theta) & \tfrac{1}{2}(1-\theta)
\end{bmatrix}.
\end{equation*}
The eigenvalues of this circulant matrix are \(\lambda_k=\tfrac{1}{2}(1-\theta)+\tfrac{1}{2}(1+\theta)e^{-2\pi\i k(N-1)/N}\) for \(k=0,\ldots,N-1\) \cite[Sec.~3.1, Eq.~(3.7)]{Gray2006}, and thus only in the case \(k=N/2\) and \(\theta=0\) does~\(V\) have a zero eigenvalue (is singular).
Therefore, the only solution of \(V\tilde\uv=\vec{0}\) is \(\tilde\uv=\vec{0}\) (except for the case \(\theta=0\) and \(N\)~even, when there are also nontrivial zigzag solutions in the nullspace).
A similar argument holds for the flux condition which is equivalent to \(V^T\tilde\fv=\vec{0}\) for vector \(\tilde\fv:=(f_1^L-f_{N}^R,f_2^L-f_{1}^R,\ldots,f_N^L-f_{N-1}^R)\); that is, the only solution is \(\tilde\fv=\vec{0}\) (except the case \(\theta=0\) and \(N\)~is even).
Therefore, for \(\gamma=1\), edge conditions~\cref{eqsiecc} are equivalent to 
\begin{equation}
u_j^R-u_{j+1}^L=0\,,\quad f_j^R-f_{j+1}^L=0
\label{eq:gamma1}
\end{equation}
(except perhaps for the isolated case \(\theta=0\) and \(N\)~even).
Thus \(\gamma=1\) restores full inter-element coupling by requiring  continuity of the field~\(u\) and the flux~\(f\) between elements.

\begin{remark}\label{remzigzag}
For the special case of \(\theta=0\) and even~\(N\), both the continuous edge conditions~\eqref{eq:gamma1} and a discontinuous zigzag mode, \(u_j^R-u_{j+1}^L\propto(-1)^j\) and  \(f_j^R-f_{j+1}^L\propto(-1)^j\), satisfy conditions~\eqref{eqsiecc} at \(\gamma=1\)\,.
Such a macroscale zigzag mode typically occurs with periodic boundary conditions and an even number of grid points.
The zigzag mode is physically deficient because it interpolates from the macro-grid as \(\cos[\pi(x-X_j)/H]+a\sin[\pi(x-X_j)/H]\) for \emph{any} arbitrary coefficient~\(a\).   
All other modes have a unique interpolation in resolved wavenumbers \(|k|\leq\pi/H\)\,.
Because of this physical deficiency, when necessary we restrict attention to the co-dimension two space without the zigzag modes.
Without the zigzag modes, the edge conditions~\cref{eqsiecc} are \text{equivalent to~\eqref{eq:gamma1}.}
\end{remark}

\end{itemize}

For a physically reasonable description of the macroscale dynamics, we need to define some measure of the overall size of the fields~\(u_j\) in each element~\(\XX_j\), \(j=1,\ldots,N\), in order to give physical meaning to the macroscale parameters~\(\Uv=(U_1,\ldots,U_N)\), that is, we need to define the projection~\cU\ invoked by~\cref{eqgridval}. 
Among many, two possibilities are the mid-element value \(\cU [u_j]:=u_j(t,(L_j+R_j)/2)\) (used in \cref{ex:nls}), or the element average \(\cU [u_j]:=\frac1{H_j}\int_{L_j}^{R_j} u_j(t,x)\,dx\)\,.
Any reasonable choice suffices \cite[\S5.3.3]{Roberts2014a}.
The next example implements the element average.

\begin{example}[high-order consistency in discretising diffusion]
\label{eghocdd}
Consider the simplest case of \pde~\cref{eqgenpde}, the case of homogeneous diffusion \(u_t=u_{xx}\)\,, which arises when \(g=0\) and the flux \(f=-u_x\)\,.
\cref{sechochd} describes computer algebra code that discretises this diffusion \pde\ via elements, all of size~\(H\), with inter-element coupling controlled by edge conditions~\cref{eqsiecc}. 
The computer algebra analyses the \pde\ to learn the emergent slow manifold model as a power series in the coupling parameter~\(\gamma\) (\cref{rem:awml}).
Because of the simplicity of the homogeneous diffusion operator, the algebraically learnt slow manifold is here local polynomials. 
In terms of subgrid space variable \(\xi:=(x-L_j)/H\in[0,1]\)\,, tuning parameter~\(\theta\), and centred mean~\(\mu_j\) and difference~\(\delta_j\) operators on the macroscale grid variables~\(U_j\) (\cref{tblopids}), the learnt subgrid structures are, when retaining terms up to \text{quadratic order in~\(\gamma\),}
\begin{subequations}\label{eqsdiffpde}%
\begin{align}
u_j&=U_j+\gamma(\xi-\tfrac12)\mu_j\delta_j U_j 
+ \tfrac12\gamma^2(\xi^2-\xi+\tfrac16)\delta_j^2U_j
\nonumber\\&\quad{} 
-\tfrac14\gamma^2(1+\theta^2)(\xi-\tfrac12)\mu_j\delta_j^3U_j
+\tfrac18\gamma^2(1-\theta^2)(\xi^2-\xi+\tfrac16)\delta_j^4U_j
\nonumber\\&\quad{}
+\theta(\xi-\tfrac12)\big[-\tfrac12(\gamma-\gamma^2)\delta_j^2U_j
+\tfrac14\gamma^2\delta_j^4U_j\big]
+\Ord{\gamma^3}.
\label{eqdudt}
\end{align}
The corresponding learnt lowest-order evolution equation, the closure~\cref{eqcsd}, is
\begin{align}
\partial_t U_j&=\frac{\gamma^2}{H^2}\left[(1-\theta^2)\mu_j^2 
+\theta^2\right]\delta_j^2U_j 
+\Ord{\gamma^3}.
\label{eqdUdtDiff}
\end{align}
This is a correct leading discretisation of the original system for every~\(\theta\)---correct as it is consistent with the homogeneous diffusion \pde\ to errors~\Ord{H^2} as \(H\to0\)\,.
But~\eqref{eqdUdtDiff} has two unusual features for holistic discretisation: it only appears at~\Ord{\gamma^2}; and for tuning \(\theta\neq\pm1\) it invokes the next-nearest neighbours,~\(U_{j\pm2}\) (through~\(\mu_j^2\delta_j^2\)), as well as the two nearest neighbours,~\(U_{j\pm1}\).
\begin{table}
\caption{\label{tblopids}Useful operator identities \protect\cite[p.65, e.g.]{npl61} where \(E^{-1}\) and \(E\) represent a shift to the left and right, respectively, with subscript~\(x\) indicating a shift of distance~\(H\), subscript~\(j\) indicating a shift of one in this element index, and subscript~\(\xi\) indicating a  shift of one in this dimensionless subgrid variable. 
For example, \(\delta_j^2U_j=(E_j-2+E_j^{-1})U_j=U_{j+1}-2U_j+U_{j-1}\) and \(\mu_j\delta_jU_j=\frac12(E_j-E_j^{-1})U_j=\frac12[U_{j+1}-U_{j-1}]\)\,.}
\begin{equation*}
\begin{array}{rlrl}
\hline
&\delta=E^{1/2}-E^{-1/2^{\vphantom{|}}}
&&\mu=\tfrac12(E^{1/2}+E^{-1/2})
\\&\delta_x=2\sinh(H\partial_x/2)
&&\mu_x=\cosh(H\partial_x/2)
\\&\delta_j=2\sinh(\partial_j/2)
&&\mu_j=\cosh(\partial_j/2)
\\&E^{\pm1}=1\pm\mu\delta+\tfrac12\delta^2
&&\mu^2=1+\tfrac14\delta^2
\\[0.5ex]\hline
\end{array}
\end{equation*}
\end{table}%
\cref{seclin} details how centre manifold theory supports such truncations as an approximation to an in-principle exact slow manifold that exists, and is exponentially quickly attractive for a wide range of initial conditions. 
This theoretical support also applies to nonlinear modifications of such linear diffusion.

The algebra of \cref{sechochd} computes to arbitrarily high-order in coupling parameter~\(\gamma\), learning more and more subgrid structures of the diffusion \pde\ \text{\(u_t=u_{xx}\)}\,.
Computed to element interaction errors~\Ord{\gamma^6} we find the macroscale closure~\cref{eqcsd} is 
\begin{align}
H^2\partial_t U_j&=
\theta^2\big[\gamma^2\delta_j^2-\tfrac12\gamma^3\delta_j^4
+\tfrac{1}{4}\gamma^4(\tfrac{5}{3}\delta_j^4+\delta_j^6)-\tfrac{1}{4}\gamma^5(\tfrac{5}{3}\delta_j^6+\tfrac12\delta_j^8)\big]U_j 
\nonumber\\&\quad{}
+\theta^2(1-\theta^2)\big[\tfrac{1}{16}\gamma^4(\tfrac23+\tfrac13\mu_j^2)\delta_j^6
-\tfrac{1}{16}\gamma^5(\tfrac23+\tfrac13\mu_j^2)\delta_j^8 \big]U_j
\nonumber\\&\quad{}
+(1-\theta^2)\big[\gamma^2\mu_j^2\delta_j^2 
-\tfrac12\gamma^3\mu_j^2\delta_j^4
+\tfrac16\gamma^4(1+\tfrac{11}{8}\delta_j^2)\mu_j^2\delta_j^4
\nonumber\\&\qquad{}
-\tfrac16\gamma^5(1+\tfrac{5}{8}\delta_j^{2})\mu_j^2\delta_j^6 \big]U_j
+\Ord{\gamma^6}.  
\label{eqgopdiff}
\end{align}
We test how well such discretisations predict the diffusion dynamics by comparing the \pde\ \(u_t=u_{xx}\) with the equivalent \pde\ of discretisation~\cref{eqgopdiff}.  
To determine the equivalent \pde{}, the macroscale grid fields~\(U_j\) over all discrete~\(j\) are interpolated to provide a description of~\(U_j\) over the continuous variable~\(x\), so that one step in index~\(j\) of field~\(U_j\) is the same as one step in~\(x\) of size~\(H\): \(\delta_j^2U_j=\delta_x^2U_j\) and \(\mu_j\delta_jU_j=\mu_x\delta_xU_j\)\,, and similarly for higher orders.\footnote{Mean operators \(\mu_j\)~and~\(\mu_x\) and difference operators \(\delta_j\)~and~\(\delta_x\) are not equivalent; but they give the same results when operating on macroscale fields~\(U_j\) because the \(x\)~dependence of the macroscale field is defined by an interpolation of the discrete~\(j\) field values (\cref{remcom}).
In contrast, as we show in \cref{secpcmwave1,Xsecpchdd} (in particular~\cref{lemcc1eoca,Xlemcc1eocb}), operators \(\delta_x\)~and~\(\delta_j\) (and similarly \(\mu_x\)~and~\(\mu_j\)) are distinctly different when operating on microscale fields~\(u_j(t,x)\).
}
Upon computing to higher-order errors of~\Ord{\gamma^9}, the equivalent \pde\ to the discrete model is, by expanding the operator identity that \(\delta_x=2\sinh(H\partial_x/2)\) (\cref{tblopids}),
{\renewcommand{\Dn}[3]{\partial_{#1}^{#2}#3}
\begin{align}
\partial_tU&=\gamma^2 \DD xU 
+(1-\theta^2)\left[ (\tfrac13\gamma^2-\tfrac12\gamma^3+\tfrac16\gamma^4)H^2\Dn x4U
\right.\nonumber\\&\quad\left.{}
+(\tfrac{2}{45}\gamma^2-\tfrac{5}{24}\gamma^3+\tfrac{43}{144}\gamma^4-\tfrac{1}{6}\gamma^5+\tfrac{23}{720}\gamma^6)H^4\Dn x6U
\right.\nonumber\\&\quad\left.{}
+(\tfrac{1}{315}-\tfrac{3}{80}\gamma^3+\tfrac{61}{480}\gamma^4-\tfrac{3}{16}\gamma^5+\tfrac{49}{360}\gamma^6-\tfrac{23}{480}\gamma^7+\tfrac{11}{1680})H^6\Dn x8U
\right]
\nonumber\\&\quad{}
+\theta^2\left[ (\tfrac{1}{12}\gamma^2-\tfrac12\gamma^3+\tfrac{5}{12}\gamma^4)H^2\Dn x4U
\right.\nonumber\\&\quad\left.{}
+(\tfrac{1}{360}\gamma^2-\tfrac{1}{12}\gamma^3+\tfrac{23}{72}\gamma^4-\tfrac{5}{12}\gamma^5+\tfrac{8}{45}\gamma^6)H^4\Dn x6U
\right.\nonumber\\&\quad\left.{}
+(\tfrac{1}{20160}\gamma^2-\tfrac{1}{160}\gamma^3+\tfrac{13}{192}\gamma^4-\tfrac{11}{48}\gamma^5+\tfrac{257}{720}\gamma^6-\tfrac{4}{15}\gamma^7+\tfrac{13}{168}\gamma^8)H^6\Dn x8U \right]
\nonumber\\&\quad{}
+\theta^2(1-\theta^2)\left[(\tfrac{1}{16}\gamma^4-\tfrac{1}{16}\gamma^6)H^4\Dn x6U
\right.\nonumber\\&\quad\left.{}
+(\tfrac{1}{48}\gamma^4-\tfrac{1}{16}\gamma^5-\tfrac{1}{192}\gamma^6+\tfrac{3}{32}\gamma^7-\tfrac{3}{64}\gamma^8)H^6\Dn x8U\right]
\nonumber\\&\quad{}
+\theta^4(1-\theta^2)\left[(\tfrac{1}{64}\gamma^6-\tfrac{1}{64}\gamma^8)H^6\Dn x8U\right]+\Ord{H^8\partial_x^9U}.
\label{eqepdediff}
\end{align}
}
For full coupling \(\gamma=1\)  and for every tuning~\(\theta\), all these coefficients evaluate to zero, except for the leading diffusion coefficient which evaluates to one.
Thus the discretisation~\eqref{eqgopdiff} is consistent with \(u_t=u_{xx}\) to error~\Ord{H^8\partial_x^9U}.

Inspection of~\cref{eqepdediff} suggests, and \cref{Xsecpchdd} proves, that a construction of the slow manifold to errors~\ord{\gamma^p}, for every even~\(p\), learns a discrete model consistent with the diffusion \pde\ to errors~\Ord{H^{p}\partial_x^{p+2}U}.
That is, the learnt holistic discretisation is consistent with the diffusion \pde\ \text{to arbitrarily high-order.}
\end{subequations}
\end{example}

In application, this systematic consistency means that one controls and estimates errors in the holistic discretisation by varying the order~\(p\) of the inter-element coupling.

\subsection{Support spatial discretisation with centre manifold theory}
\label{seclin}

We use centre manifold theory \cite[e.g.,][]{Carr81, Haragus2011, Hochs2019} to establish (\cref{thmgsmb}) the existence and emergence of an exact closure~\eqref{eqcsd} to the spatial discretisation of \pde{}s in the class~\cref{eqgenpde}.
Centre manifold theory is based upon either an equilibrium or subspace\slash manifold of equilibria, and primarily follows from the local persistence of a spectral gap in the linearised dynamics \cite[e.g.,][Part~V]{Roberts2014a}.

To find useful equilibria we embed the \pde~\cref{eqgenpde} into a wider class of problems characterised by parameter~\(\gamma\). 
For definiteness of theory, set the macroscale boundary conditions on~\XX\ to be that the field~\(u(t,x)\) is \fL-periodic in space~\(x\).
Recall that \(u_j(t,x)\)~denotes solutions of the \pde~\cref{eqgenpde} on the elements~\(\XX_j\) that partition the domain, so we reserve~\(u(t,x)\), over~\(\tilde\XX\), to denote the union over all elements of such solutions.
Then the edge-element conditions~\cref{eqsiecc} couple each element together.
The information flow between elements is moderated by the coupling parameter~\(\gamma\).
The parameter~\(\gamma\) connects the original \pde\ over the whole domain, \(\gamma=1\), to a useful base problem, \(\gamma=0\)\,.

\begin{lemma}[equilibria] \label{thmequil}
The \pde~\cref{eqgenpde} with \cref{assFandG} on elements~\(\{\XX_j\}\) with edge conditions~\cref{eqsiecc} possesses an \(N\)-dimensional subspace~\EE\ of equilibria parametrised by \(\Uv=(U_1,U_2,\ldots,U_N)\) with parameter values \(\alpha=\gamma=0\)\,. 
Each equilibrium is of a piecewise constant field~\(u^*(x)\) such that  for every \(j\in\JJ\), \(u^*(x)=U_j\) for \(x\in\XX_j\)\,.
\end{lemma}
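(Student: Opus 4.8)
The plan is to fix the base parameter values $\alpha=\gamma=0$ and read off what an equilibrium must look like on each element separately, exploiting that these values decouple the elements. With $\alpha=0$ the reaction term vanishes and \pde~\cref{eqgenpde} reduces on each $\XX_j$ to $u_{jt}=-f(x,u_j,u_{jx})_x$\,; requiring $u_{jt}=0$ therefore forces the flux to be spatially constant, $f(x,u_j,u_{jx})=c_j$ throughout $\XX_j$. At the same time, setting $\gamma=0$ collapses the edge conditions~\cref{eqsiecc} to the uncoupled form~\cref{eqsiecc0}, $u_j^R=u_j^L$ and $f_j^R=f_j^L$. The flux condition is then automatic, since the flux is already constant in $x$, so the single remaining constraint is the equal-endpoint condition $u_j(L_j)=u_j(R_j)$ on the field.

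The heart of the argument is to upgrade this equal-endpoint condition to constancy of $u_j$, and this is exactly where both parts of \cref{assFandG} are needed. First I would show the constant flux vanishes: a (smooth) equilibrium field taking equal values at the two edges of $\XX_j$ has, by Rolle's theorem, an interior point $x^\ast\in\XX_j$ with $u_{jx}(x^\ast)=0$; evaluating the spatially constant flux there and invoking $f(x,u,0)=0$ gives $c_j=f\big(x^\ast,u_j(x^\ast),0\big)=0$. Hence $f(x,u_j,u_{jx})=0$ on all of $\XX_j$. The strict monotonicity $\D{u_x}f\leq-\nu<0$ then makes $f(x,u,\cdot)$ injective in its last slot, so $u_x=0$ is the unique root of $f(x,u,\cdot)=0$\,; applied pointwise with $u=u_j(x)$ this forces $u_{jx}\equiv0$, whence $u_j$ equals a constant $U_j$ on $\XX_j$.

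It then remains to verify the converse and count dimensions. Any piecewise constant field with $u^*(x)=U_j$ on $\XX_j$ has $u^*_x=0$, so its flux is $f(x,U_j,0)=0$ and $u^*_t=-f_x=0$, an equilibrium that trivially satisfies both conditions in~\cref{eqsiecc0} (and is consistent with $\fL$-periodicity). Thus the equilibria are precisely the piecewise constants, and the linear map $\Uv\mapsto u^*$ injects $\RR^N$ onto a linear subspace $\EE\subset\HH$, giving $\dim\EE=N$ parametrised by $\Uv=(U_1,\ldots,U_N)$ as claimed.

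I expect the main obstacle to be the vanishing-flux step rather than the subsequent monotonicity inversion. It relies on enough regularity of equilibrium solutions to apply Rolle's theorem, which I would secure by reading the constant-flux relation $f(x,u_j,u_{jx})=c_j$ as a first-order \textsc{ode} and using $\D{u_x}f\neq0$ with the implicit function theorem to write $u_{jx}$ as a smooth function of $x,u_j,c_j$, thereby rendering $u_j$ differentiable; and it depends essentially on the normalisation $f(x,u,0)=0$. A secondary caveat worth flagging is that this reduction is framed for the second-order, diffusive case for which \cref{assFandG} supplies the flux monotonicity; the purely first-order wave settings of the later sections admit the same piecewise-constant equilibria but need a separate, simpler verification, since that monotonicity hypothesis does not apply there.
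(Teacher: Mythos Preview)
Your argument is correct, but it proves strictly more than the lemma states or the paper needs. The lemma only asserts the \emph{existence} of an $N$-dimensional subspace~\EE\ of piecewise-constant equilibria at $\alpha=\gamma=0$; it does not claim these are all the equilibria. The paper even remarks immediately after the lemma that ``there usually are other equilibria of~\cref{eqgenpde}+\cref{eqsiecc}: this lemma only addresses the subspace~\EE.''

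Accordingly, the paper's proof is essentially just your converse paragraph: take $u^*\equiv U_j$ on each~$\XX_j$, note $u^*_x=0$ so $f(x,U_j,0)=0$ by \cref{assFandG}, hence $u^*_t=-f_x=0$; then check that piecewise-constant fields satisfy the $\gamma=0$ edge conditions~\cref{eqsiecc0}. That is the whole argument. Your Rolle-plus-monotonicity step establishes the stronger statement that, at $\alpha=\gamma=0$, these are the \emph{only} smooth equilibria---a nice observation, and correctly argued, but not required for the centre-manifold machinery of \cref{seclin}, which only needs an $N$-dimensional equilibrium subspace to anchor the slow manifold. So you have done more work than necessary; the lemma is really a one-line verification rather than a characterisation.
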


There usually are other equilibria of~\cref{eqgenpde}+\cref{eqsiecc}: this lemma only addresses the subspace~\EE.

\begin{proof} 
With advection-reaction coefficient \(\alpha=0\) the \pde~\cref{eqgenpde} takes the form \(u_t=-f(x,u,u_x)_x\)\,.
For every piecewise constant field \(u^*(x)=U_j\) in~\(\XX_j\) the gradient \(u^*_{x}=0\)\,, and by \cref{assFandG} the flux \(f(x,u^*,u_{x}^*)=f(x,U_j,0)=0\)\,.
Thus \(\Uv=(U_1,U_2,\ldots,U_N)\) with \(\alpha=0\) are equilibria of the \pde\ on~\(\tilde\XX\). 
The equilibria also require coupling \(\gamma=0\)\,, for which the right-hand sides of the conditions~\cref{eqsiecc} are zero.
The left-hand sides are also zero for fields constant in each element, and so the conditions~\cref{eqsiecc} are satisfied.
\end{proof}

We seek solutions \(u=u^*(x)+\hat u(t,x)\) of the general \pde~\cref{eqgenpde} where \(\hat u(t,x)\)~denotes a small linearised perturbation of the equilubria in~\EE. 
Use \(\hat u_j(t,x)\) as a synonym for~\(\hat u(t,x)\) on the \(j\)th~element~\(\XX_j\).
Then, since parameter \(\alpha=0\) and for small~\(\hat u\) flux \(f(x,u,u_x)\approx f(x,u^*,\hat{u}_x)\)\,, the \pde~\cref{eqgenpde} linearises to
\begin{align}&
\hat u_t=-\partial_x\left[\D{u_x}f(x,u^*,0)\hat u_{x}\right]
\quad\text{on }\XX; 
\nonumber\\&
\text{that is,}\quad 
\hat u_{jt}=\partial_x\big[\kappa_j(x)\hat u_{jx}\big]
\quad\text{for }x\in\XX_j\,,
\label{eqpdelin}
\end{align}
where we define the local diffusivity \(\kappa_j(x):=-\D{u_x}{f(x,U_j,0)}\) for \(x\in\XX_j\)\,.
From~\cref{assFandG}, \(\kappa_j(x)\geq\nu>0\)\,.
The edge conditions~\cref{eqsiecc} are linear, so they apply here with flux~\(f_j=-\kappa_j\hat u_{jx}\)\,, and \(u_j\) replaced by~\(\hat u_j\)---for symbolic simplicity let's omit the hat hereafter.

A spectrum of eigenvalues arises from the general linearised dynamics~\eqref{eqpdelin}.
We turn to determining this spectrum when the coupling parameter \(\gamma=0\)\,, namely the diffusion~\cref{eqpdelin} with isolating edge conditions~\cref{eqsiecc0}.
In doing this, the following self-adjointness is crucial.


\begin{lemma}[self-adjoint]\label{thmsa0}
The diffusion operator \begin{equation}
\cL:=[\kappa_j(x)\partial_x\cdot]_x
\label{eqcL}
\end{equation}
subject to the \(\gamma=0\) edge conditions~\cref{eqsiecc0}, is self-adjoint in the Hilbert space~\LL\ with the usual inner product 
\(\left<v,u\right>:=\sum_j\int_{\XX_j} v_j u_j\,dx\)\,.
\end{lemma}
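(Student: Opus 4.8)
\emph{Proof idea.}\quad The plan is to establish self-adjointness by the classical Lagrange-identity route: first show that \(\cL\) is symmetric by integrating by parts twice and verifying that the resulting boundary (concomitant) terms vanish under the \(\gamma=0\) edge conditions~\cref{eqsiecc0}, and then appeal to regular Sturm--Liouville theory on each bounded element to upgrade symmetry to genuine self-adjointness. Throughout, \(u\)~and~\(v\) denote fields in the domain, so both satisfy the edge conditions~\cref{eqsiecc0}, and I retain the edge-superscript notation \(u_j^R=u_j(t,R_j)\) etc.

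First I would form \(\left<v,\cL u\right>-\left<\cL v,u\right>\) and compute it element by element. On each \(\XX_j=(L_j,R_j)\), because the symmetric cross-terms \(\kappa_j v_{jx}u_{jx}\) cancel, the integrand is an exact derivative,
\begin{equation*}
v_j\,(\kappa_j u_{jx})_x-u_j\,(\kappa_j v_{jx})_x
=\partial_x\big[\kappa_j(v_j u_{jx}-u_j v_{jx})\big].
\end{equation*}
Summing over \(j\in\JJ\) and integrating, every volume contribution collapses to a boundary evaluation,
\begin{equation*}
\left<v,\cL u\right>-\left<\cL v,u\right>
=\sum_{j\in\JJ}\big[\kappa_j(v_j u_{jx}-u_j v_{jx})\big]_{L_j}^{R_j}.
\end{equation*}

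It then remains to show each bracketed term vanishes. Regrouping the \(j\)th term by the edge values of the field and of the flux \(f_j=-\kappa_j u_{jx}\), it reads
\begin{equation*}
v_j^R(\kappa_j u_{jx})^R-v_j^L(\kappa_j u_{jx})^L
-u_j^R(\kappa_j v_{jx})^R+u_j^L(\kappa_j v_{jx})^L.
\end{equation*}
The \(\gamma=0\) edge conditions~\cref{eqsiecc0} supply exactly the two ingredients needed: continuity of the field within the element (\(u_j^R=u_j^L\) and \(v_j^R=v_j^L\)), and continuity of the flux (\(f_j^R=f_j^L\), that is \((\kappa_j u_{jx})^R=(\kappa_j u_{jx})^L\), and likewise for~\(v\)). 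Substituting these pairwise equalities makes the first two terms and the last two terms separately cancel, so each element contributes zero. I would stress that this cancellation is \emph{internal} to each element, reflecting that \(\gamma=0\) isolates the elements; no inter-element matching is invoked, consistent with \cref{thmequil}.

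Having shown \(\left<v,\cL u\right>=\left<\cL v,u\right>\), the operator is symmetric. The main obstacle to a fully rigorous claim is the domain question: symmetry alone is weaker than self-adjointness. For completeness I would note that on each bounded interval \(\XX_j\), with \(\kappa_j\) smooth and \(\kappa_j\geq\nu>0\) by \cref{assFandG} and with the periodic-type edge conditions~\cref{eqsiecc0}, the restriction of \(\cL\) is a regular self-adjoint Sturm--Liouville operator; the direct sum over \(j\in\JJ\) is then self-adjoint on~\LL. Checking that \(\operatorname{Dom}(\cL^*)=\operatorname{Dom}(\cL)\), rather than merely the inclusion given by symmetry, is where care is required, but this is standard for regular problems, so the essential content of the lemma is the element-wise boundary cancellation above.
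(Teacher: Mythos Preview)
Your argument is correct and follows essentially the same route as the paper: integrate by parts on each element to reduce \(\left<v,\cL u\right>-\left<\cL v,u\right>\) to boundary terms, then use the \(\gamma=0\) conditions \(u_j^R=u_j^L\), \(f_j^R=f_j^L\) (and likewise for~\(v\)) to cancel them element by element. Your added remark distinguishing symmetry from self-adjointness and invoking regular Sturm--Liouville theory on each bounded interval is more careful than the paper, which simply presents the boundary-term cancellation and calls the operator self-adjoint.
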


\begin{proof}
Proceed straightforwardly via integration by parts, where for every~\(u\in\LL\) the flux \(f_j:=-\kappa_j(x)u_{jx}\)\,, and likewise for~\tu\ and~\tf: 
\begin{align*}
\left<\tu,\cL u\right> - \left<\cL\tu, u\right>
&=\sum_j\int_{\XX_j}\tu_j[\kappa_j(x)u_{jx}]_x-u_j[\kappa_j(x)\tu_{jx}]_x\,dx
\\&=\sum_j \big[\tu_j \kappa_j(x)u_{jx}-u_j \kappa_j(x)\tu_{jx}\big]_{L_j}^{R_j}
\\&=\sum_j \big( -\tu_j^R f_j^R+u_j^R \tf_j^R 
+\tu_j^L f_j^L-u_j^L \tf_j^L\big)
\\&=\sum_j \big( -\tu_j^L f_j^L+u_j^L \tf_j^L 
+\tu_j^L f_j^L-u_j^L \tf_j^L\big)
\quad(\text{by }\cref{eqsiecc0})
\\&=0\,.
\end{align*}
\end{proof}

\cref{thmsa} extends \cref{thmsa0} by establishing that the linear operator~\cL, with inter-element coupling controlled by edge conditions~\cref{eqsiecc} is self-adjoint for every~\(\gamma\).

By the self-adjointness of \(\cL\)~\eqref{eqcL}, we need only seek real eigenvalues in the spectrum of~\eqref{eqpdelin}. 
Let's first discuss the zero eigenvalues, and second, the non-zero eigenvalues.
For the diffusion operator~\cref{eqcL} any eigenfunction corresponding to an eigenvalue of zero must be linear on each~\(\XX_j\).
The element-periodicity conditions~\cref{eqsiecc0} for \(\gamma=0\) then require that the eigenfunctions must be constant within every element.
That is, the zero-eigenspace of~\cL\ when \(\gamma=0\) is the \(N\)-D~subspace of equilibria~\EE\ (defined in \cref{thmequil}).
By self-adjointness, there are no generalised eigenfunctions.
Hence the operator~\cref{eqcL}+\cref{eqsiecc0} has \(N\)~eigenvalues of zero, and \EE\ is the corresponding \text{\(N\)-D slow subspace.}

\begin{lemma}[exponential dichotomy] \label{thmdicho}
Under \cref{assFandG}, the operator~\cref{eqcL}, \(\cL=[\kappa_j(x)\partial_x\cdot]_x\) subject to~\cref{eqsiecc0}, in~\LL\ has \(N\)~zero eigenvalues and all other eigenvalues~\(\lambda\) are negative and bounded away from zero by \(\lambda\leq-\beta\) where a bound \(\beta:=\min_{j\in\JJ,\,x\in\XX_j}\big[ 4\pi^2\kappa_j(x)/H_j^2 \big]>0\)\,.
\end{lemma}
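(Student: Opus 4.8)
The plan is to exploit the fact that the \(\gamma=0\) edge conditions~\cref{eqsiecc0} completely decouple the elements, so that the eigenproblem for~\cL\ reduces to \(N\)~independent self-adjoint periodic problems, one per element. First I would observe that an eigenfunction of~\cL\ with eigenvalue~\(\lambda\) must, on each element~\(\XX_j\), satisfy \([\kappa_j(x)u_{jx}]_x=\lambda u_j\) together with the \(H_j\)-periodicity forced by~\cref{eqsiecc0}; since the elements do not communicate when \(\gamma=0\), the spectrum of~\cL\ is the union over \(j\in\JJ\) of the spectra of the single-element operators \(\cL_j:=[\kappa_j(x)\partial_x\cdot]_x\) with periodic boundary conditions. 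Each~\(\cL_j\) is a regular Sturm--Liouville operator with smooth coefficient \(\kappa_j\geq\nu>0\) on a bounded interval, hence has compact resolvent and therefore purely discrete real spectrum (the realness coming from the self-adjointness of~\cref{thmsa0}). The \(N\)~zero eigenvalues, and the identification of~\EE\ as the corresponding slow subspace, is exactly the conclusion already drawn in the paragraph preceding the lemma, since the zero-eigenspace of each~\(\cL_j\) is spanned by the constant field on~\(\XX_j\). It thus remains only to bound every \emph{nonzero} eigenvalue away from zero.

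Next I would estimate the spectral gap of each~\(\cL_j\) through its Rayleigh quotient. Because \(\cL_j\) is self-adjoint with zero-eigenspace equal to the constants, any eigenfunction~\(u_j\) with \(\lambda\neq0\) is orthogonal to the constants, that is \(\int_{\XX_j}u_j\,dx=0\). Integrating by parts and using the periodicity to annihilate the boundary terms gives
\[
\lambda=\frac{\int_{\XX_j}u_j[\kappa_j(x)u_{jx}]_x\,dx}{\int_{\XX_j}u_j^2\,dx}
=-\frac{\int_{\XX_j}\kappa_j(x)u_{jx}^2\,dx}{\int_{\XX_j}u_j^2\,dx}\,,
\]
which is manifestly negative for any non-constant~\(u_j\). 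Bounding \(\kappa_j(x)\geq\kappa_{j,\min}:=\min_{x\in\XX_j}\kappa_j(x)\), the problem reduces to a lower bound on \(\int_{\XX_j}u_{jx}^2\,dx\big/\int_{\XX_j}u_j^2\,dx\) for mean-zero periodic functions.

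Finally I would invoke the Wirtinger (Poincar\'e) inequality for periodic functions of zero mean on an interval of length~\(H_j\): expanding \(u_j\) in a Fourier series, the absence of the constant mode means the smallest frequency present is \(2\pi/H_j\), whence \(\int_{\XX_j}u_{jx}^2\,dx\geq(2\pi/H_j)^2\int_{\XX_j}u_j^2\,dx\). Combining these estimates yields \(\lambda\leq-4\pi^2\kappa_{j,\min}/H_j^2\) for every nonzero eigenvalue arising from element~\(j\), and taking the minimum over all elements gives the uniform bound \(\lambda\leq-\beta\) with \(\beta=\min_{j\in\JJ,\,x\in\XX_j}[4\pi^2\kappa_j(x)/H_j^2]\); positivity \(\beta>0\) follows from \(\kappa_j\geq\nu>0\) together with finiteness of the~\(H_j\). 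The one point I would be most careful about is the applicability and sharpness of the Wirtinger constant: it is essential that each nonzero eigenfunction be genuinely mean-zero on its own element, so that the \(k=0\) Fourier mode is truly absent and the gap constant is \(2\pi/H_j\) rather than~\(0\). This is precisely what the self-adjointness of~\cref{thmsa0}, combined with the fact that the zero-eigenspace consists exactly of the per-element constants, guarantees.
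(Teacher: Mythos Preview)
Your proof is correct and follows essentially the same approach as the paper: both compute the Rayleigh quotient via integration by parts (boundary terms vanishing by the \(\gamma=0\) periodicity), bound \(\kappa_j\) below pointwise, and then invoke the spectral gap of the constant-coefficient periodic Laplacian. The only cosmetic difference is that you decouple explicitly into per-element problems and name the Wirtinger/Poincar\'e inequality, whereas the paper keeps the global eigenfunction~\(v\) and phrases the same gap estimate as a Rayleigh--Ritz comparison with the homogeneous operator \(\cL_*=H_j^2\partial_x^2\) (whose smallest nonzero eigenvalue \(-4\pi^2\) is precisely your Wirtinger constant); in both cases the key input is that \(v\perp\EE\), which you state more explicitly than the paper does.
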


\begin{proof} 
The precisely \(N\)~zero eigenvalues are established in the paragraphs preceding \cref{thmdicho}.
Also, \cref{thmsa0} establishes all eigenvalues of~\(\cL\) are real. 
Let \(\lambda\)~be a non-zero eigenvalue and \(v(x)\)~be a corresponding eigenfunction.
Consequently, and using the usual inner product 
\(\left<v,u\right>:=\sum_j\int_{\XX_j} v_j u_j\,dx\)\,,
we proceed to derive the inequality 
\begin{align}
(-\lambda)\|v\|^2
&=-\lambda\left<v,v\right>
=\left<v,-\lambda v\right>
=\left<v,-\cL v\right>
=\sum_{j\in\JJ}\int_{\XX_j}-v_j [\kappa_j(x)v_{jx}]_x\,dx
\nonumber\\&=
\sum_{j\in\JJ}\big[-v_j \kappa_jv_{jx}\big]_{L_j}^{R_j}
+\sum_{j\in\JJ}\int_{\XX_j}v_{jx}\kappa_j v_{jx}\,dx
\quad(\text{integrating by parts})
\nonumber\\&=
\sum_{j\in\JJ} 0
+\sum_{j\in\JJ}\int_{\XX_j}\kappa_j v_{jx}^2\,dx
\quad(\text{using \cref{eqsiecc0}})
\nonumber\\&\geq
\frac\beta{4\pi^2}\sum_{j\in\JJ}\int_{\XX_j}H_j^2 v_{jx}^2\,dx
\,,\label{eqlem3a}
\end{align}
for the bound~\(\beta\) defined in terms of the lowest diffusivity of operator~\(\cL\): \(\kappa_j(x)\geq\beta H_j^2/(4\pi^2)\) for all \(x\in\XX_j\)\,, for every~\(j\in\JJ\)\,.
The bound \(\beta>0\) follows from the monotonicity of~\(f\), \cref{assFandG}, since \(\kappa_j(x) =-\D{u_x}f(x,U_j,0) \geq\nu>0\) for all~\(j\in\JJ\)\,,  so \(\beta/(4\pi^2) =\min_{j\in\JJ,\,x\in\XX_j}\big[\kappa_j(x)/H_j^2\big] \geq \nu/{\max_j H_j^2} >0\)\,.
A first consequence of inequality~\eqref{eqlem3a} is that there are no positive eigenvalues~\(\lambda\).

Secondly, we establish that the non-zero eigenvalues are bounded away from zero by~\(-\beta\).
This bound is established by relating inequality~\eqref{eqlem3a} for a general eigenvalue~\(\lambda\) and associated eigenvector~\(v\) of some heterogeneous problem to the known lowest magnitude eigenvalue~\(\lambda_1\) of a spatially homogeneous problem.
Let \(\cL_*:=H_j^2\DD x{}\) for \(x\in\XX_j\) with edge conditions~\cref{eqsiecc0} (that is, the linear operator~\eqref{eqcL} for the special homogeneous case of \(\kappa_j(x)=H_j^2\)).
Then by the reverse argument to that of the previous paragraph,
\(\sum_{j\in\JJ}\int_{\XX_j}H_j^2 v_x^2\,dx
=\cdots=\left< v,-\cL_* v\right>\)\,.
By the Rayleigh--Ritz theorem, the smallest magnitude, non-zero, eigenvalue~\(\lambda_1\) of~\(\cL_*\) satisfies \(-\lambda_1=\min_{w\perp\EE} \left<w,-\cL_*w\right>/\|w\|^2\)\,, and so 
\(\sum_{j\in\JJ}\int_{\XX_j}H_j^2 v_x^2\,dx
=\left< v,-\cL_* v\right> \geq -\lambda_1\|v\|^2\)\,.
But for~\(\cL_*\), subject to the condition of \(H_j\)-periodicity~\cref{eqsiecc0}, it is well-known that the smallest magnitude, non-zero eigenvalue is \(\lambda_1=-4\pi^2\) (corresponding to eigenmodes \(\e^{\pm\i2\pi x/H_j}\)).
Hence, inequality~\eqref{eqlem3a} becomes
\((-\lambda)\|v\|^2
\geq \frac\beta{4\pi^2}\sum_{j\in\JJ}\int_{\XX_j}H_j^2 v_x^2\,dx
\geq -\frac\beta{4\pi^2}\lambda_1\|v\|^2
= \beta\|v\|^2 \)\,.
That is, \(\lambda\leq-\beta\) for every non-zero eigenvalue~\(\lambda\).
\end{proof}

With \cref{thmdicho} proving a spectral gap in the linearised dynamics of~\cref{eqpdelin}+\cref{eqsiecc0} with \(\alpha=\gamma=0\)\,, we are now in a position to discuss the corresponding slow manifold of~\cref{eqgenpde}+\cref{eqsiecc}.  
Inspired by the backwards theory of numerical analysis \cite[e.g.,][]{Grcar2011}, we apply backwards theory to establish the existence and emergence of a slow manifold \cite[]{Hochs2019,Roberts2018a}.

For rigorous theory we notionally adjoin the two trivial dynamical equations \(\alpha_t=\gamma_t=0\) to the linearised system~\cref{eqpdelin}+\cref{eqsiecc}.
Then the equilibria in \(\alpha\gamma u\)-space are~\((0,0,u^*(x))\) since these equilibria are at \(\alpha=\gamma=0\)\,.
Hence, strictly, there are two extra zero eigenvalues associated with the trivial \(\alpha_t=\gamma_t=0\)\,, and the corresponding slow subspace of each equilibria is \((N+2)\)-D.
But, except for issues associated with the domain of validity for non-zero \(\alpha\) and \(\gamma\), for simplicity, in the following we do not explicitly discuss these two trivial dynamical equations nor their eigenvalues, but consider them implicit.

\begin{theorem}[slow manifold]\label{thmgsmb}
Consider the generic reaction-advection-diffusion \pde~\cref{eqgenpde} with \cref{assFandG} on domain~\(\tilde\XX\) with inter-element edge conditions~\cref{eqsiecc}. 
For every order~\(p\geq2\)\,, there exists a system for~\(u(t,x)\), 
namely the following polynomial map~\(\upsilon_p\) and \pde\ with polynomial nonlinearity~\(\ff_p\)
\begin{equation}
u(t,x)=u^*+\upsilon_p(t,x,v(t,x)) 
\quad\text{where }v_t=\cL v+\ff_p(t,x,v),
\label{eqpdesd}
\end{equation}
such that the map+\pde~\cref{eqpdesd} is \Ord{\|u-\EE\|^p+\alpha^p+\gamma^p} close to \cref{eqgenpde}+\cref{eqsiecc}, and that the map+\pde~\cref{eqpdesd} has the following properties.
\begin{enumerate}[label={\ref{thmgsmb}}(\alph{enumi}), leftmargin=*
                 ,ref={\cref{thmgsmb}}(\alph{enumi})] %
\item There exists an open domain~\cE, in \(\alpha\gamma u\)-space, containing the subspace~\EE, such that in~\cE\ there exists a constructible slow manifold of~\pded, polynomial in~\Uv, \(\alpha\) and~\(\gamma\) of order~\(p-1\):
\begin{equation}
u=u(t,x,\Uv,\gamma,\alpha)
\quad\text{such that}\quad
\frac{d\Uv}{dt}=\Gv(t,\Uv,\gamma,\alpha).
\label{eqgensm}
\end{equation}

\item\label{thmgsm:emergent} 
This slow manifold is emergent in the sense that for all solutions~\(u(t,x)\) of~\pded\ that stay in~\cE, there exists a solution~\(\Uv(t)\) of~\cref{eqgensm} such that \(u(t,x)=u(t,x,\Uv(t),\gamma,\alpha)+\Ord{e^{-\beta' t}}\) for some decay rate \(0<\beta'<\beta\) (\(\beta\) defined by \cref{thmdicho}).

\end{enumerate}
\end{theorem}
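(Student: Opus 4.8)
The plan is to apply the backward-error formulation of centre manifold theory of \cite{Hochs2019} and \cite{Roberts2018a}, using the exponential dichotomy of \cref{thmdicho} as the essential hypothesis. First I would adjoin the trivial dynamics \(\alpha_t=\gamma_t=0\), as flagged in the paragraph preceding the theorem, so that the perturbation parameters become dynamical variables and the slow subspace at each equilibrium of~\EE\ is \((N+2)\)-dimensional: the \(N\)~directions of~\EE\ (\cref{thmequil}) together with the two trivial directions. By \cref{thmsa0} the operator~\cL\ subject to the uncoupled edge conditions~\cref{eqsiecc0} is self-adjoint, hence has real spectrum and no generalised eigenvectors, so \cref{thmdicho} delivers a clean spectral gap: exactly \(N\)~zero eigenvalues with slow eigenspace~\EE, and all remaining spectrum bounded above by~\(-\beta<0\). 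This dichotomy, the smoothness of \(f\) and~\(g\) (\cref{assFandG}), and the Hilbert-space setting of~\LL\ are precisely the ingredients the abstract theorem requires.

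Next I would carry out the backward-error construction that underlies claim~(a). Rather than build an exact invariant manifold of the original \pde, the residual-reducing iteration of \cref{rem:awml} constructs the polynomial map~\(\upsilon_p\) and the polynomial vector field~\(\ff_p\) order by order in the combined small quantities \(\|u-\EE\|\), \(\alpha\) and~\(\gamma\). At each order the update solves a homological equation of the form \(\cL w=(\text{known residual})\) on the stable complement of~\EE; this is solvable precisely because the spectral gap of \cref{thmdicho} makes \cL\ boundedly invertible there, equivalently because self-adjointness (\cref{thmsa0}) renders \(\EE^{\perp}\) an \cL-invariant subspace on which the Fredholm alternative applies after projecting the residual off the kernel~\EE. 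Truncating after the order-\((p-1)\) terms yields \(\upsilon_p\) and \(\ff_p\) for which the map+\pde~\pded\ has the order-\((p-1)\) polynomial~\cref{eqgensm} as an exact slow manifold, while differing from \cref{eqgenpde}+\cref{eqsiecc} only by terms of \Ord{\|u-\EE\|^p+\alpha^p+\gamma^p}, which is the stated closeness.

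Finally I would establish emergence, claim~(b). Because the constructed polynomial system~\pded\ shares the linear part~\cL\ and hence the dichotomy of \cref{thmdicho}, its stable directions contract at a rate no slower than~\(\beta\), so the attractivity half of the centre-manifold theorem of \cite{Hochs2019} applies directly to~\pded: any trajectory remaining in the open neighbourhood~\cE\ of~\EE\ is shadowed by a trajectory on the slow manifold, with an error decaying like~\Ord{e^{-\beta' t}} for any fixed \(0<\beta'<\beta\). Pushing this slow trajectory through the projection~\cU\ of~\cref{eqgridval} recovers the reduced \ode~\cref{eqgensm} for~\Uv\ and completes the emergence statement.

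The main obstacle, and the reason the theorem is cast in backward-error form, is the interplay of three features that defeat a naive appeal to classical autonomous, finite-dimensional centre manifold theory: the infinite-dimensionality of~\LL, which demands the resolvent and analytic-semigroup estimates for~\cL\ that \cref{thmdicho} underpins only at the level of the spectrum; the spatial heterogeneity of the diffusivity~\(\kappa_j\), which precludes explicit eigenfunctions yet is controlled uniformly by the bound~\(\beta\); and the slow non-autonomy carried by~\(g(t,\cdot)\), which must be absorbed either as an additional slow variable or through the non-autonomous invariant-manifold theory of \cite{Hochs2019}. Verifying that \cref{assFandG} genuinely secures these hypotheses---rather than the comparatively routine algebra of the homological iteration---is where the real work lies; once they hold, existence, the approximation order, and emergence all follow from the cited abstract machinery.
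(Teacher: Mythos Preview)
Your proposal is correct and follows essentially the same route as the paper: invoke the backward-error centre manifold theorems of \cite{Hochs2019}, using the self-adjointness of \cref{thmsa0} and the spectral gap of \cref{thmdicho} to verify the hypotheses, then read off closeness, existence of the polynomial slow manifold, and exponential attraction from that machinery. The paper is terser---it simply cites Theorems~2.18, 2.22, Corollary~2.23 and Proposition~2.10 of Hochs--Roberts rather than spelling out the homological iteration---and it adds one step you skate over: the construction is carried out pointwise at each equilibrium \(u^*\in\EE\), and smoothness in~\(u^*\) is then used to glue the local domains~\(D_\mu\) into the global~\(\cE\).
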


In this theorem, and subsequently, asserting a quantity~\(\epsilon=\Ord{v^p+\alpha^q+\gamma^r}\) is to mean the asymptotic property that \(\epsilon/(v^p+\alpha^q+\gamma^r)\) is bounded as \((v,\alpha,\gamma)\to\vec 0\) \cite[e.g.,][]{Roberts2014a}.
Also, the norm \(\|u-\EE\|\) is a measure of the distance of~\(u\) from the subspace~\EE\ of equilibria.
Strictly, this measure of distance is a norm in appropriate graded Frechet spaces that are obtained from intersections of compactly nested sequences of Banach spaces \cite[Defs.~2.2--2.4]{Hochs2019} (hereafter denoted~HR).

\begin{proof}
For every equilibria \(u^*\in\EE\)\,, we invoke the backwards Theorems~2.18 and~2.22 of~HR.
From \cref{thmsa0}, the eigenfunctions of the operator \(\cL=[\kappa_j(x)\partial_x\cdot]_x\) subject to~\cref{eqsiecc0} form an orthonormal basis of the requisite Hilbert space on \(\tilde\XX:=\cup_j\XX_j\) (Hilbert--Schmidt theorem).
Since \(f\) and~\(g\) are smooth (infinitely differentiable) they can be expanded polynomially to any specified order~\(p\) as required.
Hence, with \cref{assFandG}, the conditions of Theorems 2.18~and~2.22 by HR are satisfied.
Thus Corollary~2.23 \text{by~HR applies.}

For every order~\(p\), and for~\(u(t,x)\) defined by the map+\pde~\eqref{eqpdesd}, this corollary asserts that \(u(t,x)\) satisfies \cref{eqgenpde}+\cref{eqsiecc} to a residual \(R_p=\Ord{\|(v,\alpha,\gamma)\|^p}=\Ord{\|u-\EE\|^p+\alpha^p+\gamma^p}\)\,.
In this sense of asymptotic agreement, every such system~\cref{eqpdesd} is close to the system \cref{eqgenpde}+\cref{eqsiecc}.

Also, every system~\pded\ is constructed to have clear invariant subspaces in~\(v\), and so clear invariant manifolds in~\(u\) (Definition~2.11 of~HR).
Further, from the exponential dichotomy of \cref{thmdicho} there exists a domain~\(D_{\mu}\), in \(\alpha\gamma u\)-space,  in which the system~\pded\ has a slow manifold.
Since \(0\leq \mu<\beta\leq -\lambda\) (Section~2.3.1 of~HR) and by Proposition~2.10 of~HR, the slow manifold solutions are exponentially quickly attractive, at rate of at least~\(\beta'=\beta-\mu>0\)\,, to all solutions in the domain~\(D_{\mu}\).

These properties hold when based upon each and every equilibrium~\(u^*\) in the slow subspace~\EE. 
By smoothness of the system \cref{eqgenpde}+\cref{eqsiecc}, we can ensure we construct the close systems \pded\ to be smooth as a function of~\(u^*\).
Hence the properties hold smoothly in the union of all the domains~\(D_\mu\), to form a global domain, denoted~\(\cE\), in which the properties hold. 
\text{This establishes \cref{thmgsmb}.}
\end{proof}

\cref{thmgsmb} holds for the \pde\ system \cref{eqgenpde} on domain~\(\tilde\XX\) with edge conditions~\cref{eqsiecc} between elements.
The system \cref{eqgenpde}+\cref{eqsiecc} reverts to the original  \pde~\cref{eqgenpde} on the domain~\(\XX\) when we evaluate \cref{eqgenpde}+\cref{eqsiecc} on~\(\tilde\XX\) at full coupling \(\gamma=1\)\,.
Thus the evolution equation~\cref{eqgensm} evaluated at full coupling, namely \(d\Uv/dt=\Gv(t,\Uv,1,\alpha)\)\,, is controllably close to a discretisation of the original \pde.
Further, this evolution is that on a slow manifold which is emergent in the domain \(\cE_1:=\cE|_{\gamma=1}\) (which is not empty as \(\cE_1\)~contains at least the equilibria \(u={}\)constant and \(\alpha=0\)).

Similar slow manifold properties to those of \cref{thmgsmb} could be established via the \emph{forward} Theorems~2.9 and~3.22 of \cite{Haragus2011}, together with Proposition~3.6 of \cite{Potzsche2006}.  
The argument for Theorem~6 by \cite{Jarrad2016a} is an example.
However, such a forward approach attempts to prove the existence of an invariant manifold for the nonlinear system~\cref{eqgenpde}+\cref{eqsiecc}, and in doing so imposes significant constraints on the functions~\(f\) and~\(g\), constraints that in applications either are often hard to establish, or are not satisfied. 
The \emph{backwards} theory of \cite{Hochs2019}, which shows the existence of a system with an invariant manifold arbitrarily close to \cref{eqgenpde}+\cref{eqsiecc}, is less restrictive on the functions~\(f\) and~\(g\) and so has a wider range of applications.

\subsection{The slow manifold of wave-like PDEs}
\label{secsmwpde}

Although this article's principal scope is the spatial discretisation, or dimensional reduction, of reaction-advection-diffusion \pde{}s~\cref{eqgenpde}, much of the approach and theory usefully applies to the spatial discretisation of non-autonomous wave-like \pde{}s in the form
\begin{equation}
u_{tt}=-f(x,u,u_x)_x+\alpha g(t,x,u,u_x)
\label{eqwavpde}
\end{equation}
on a domain~\XX, with \cref{assFandG}.
The only difference between~\cref{eqwavpde,eqgenpde} is the number of time derivatives on the left-hand side.
This subsection comments on the similarities and differences of the theoretical support for such wave systems.

Partition space into \(N\)~elements as described at the beginning of \cref{secsapc} with the  \(j\)th~element over the interval \(\XX_j=(L_j,R_j)\)\,, and apply the edge conditions~\cref{eqsiecc}.
Then, for \(\alpha=\gamma=0\)\,, the subspace~\EE\ of piecewise linear equilibria of \cref{thmequil} still exists.
Upon linearisation of~\cref{eqwavpde} about each of these equilibria, the spatial differential operator \(\cL=[\kappa_j(x)\partial_x\cdot]_x\) on~\(\tilde\XX=\cup_j\XX_j\) remains self-adjoint.
The exponential dichotomy of the operator~\cL\ (\cref{thmdicho}) still applies, namely that there are \(N\)~eigenvalues of zero, and the others are\({}\leq-\beta\)\,.
So far, the considerations are the same for both diffusion systems and wave systems.

Differences between wave-like \pde{}s~\cref{eqwavpde} and reaction-advection-diffusion \pde{}s~\cref{eqgenpde} start with the eigenvalues of the right-hand side operator~\cL\ with its eigenvalues denoted by~\(\lambda\).
Seeking wave solutions of~\eqref{eqwavpde} with frequency~\(\omega\), then \(\omega=\pm\sqrt{-\lambda}\) and all frequencies~\(\omega\) are real as every \(\lambda\leq0\)\,.
Here the slow manifold dichotomy is now between slow waves with near zero frequency, separated from fast waves with frequencies\({}\geq\sqrt{\beta}\) (\cref{thmsa0}).
Such subcentre slow manifolds (a class of nonlinear normal modes \cite[e.g.]{Shaw94a}) are ubiquitous in both geophysical and elastic engineering applications.
However, much less is known rigorously about subcentre slow manifolds: even their existence is problematic \cite[]{Lorenz87}.
Nonetheless, inspired by the backwards theory of \cite{Roberts2018a, Hochs2019}
we make the following backwards conjecture for the wave \pde~\cref{eqwavpde} that is analogous to \cref{thmgsmb} for dissipative systems.

\begin{conjecture}\label{thmwavemap}
For every order~\(p\), there exists a (polynomial) coordinate transformation and a (polynomial) \pde\ system in the new variables~\((\Uv,V(x))\) of the form
\begin{subequations}\label{eqwavct}%
\begin{align}
&u=u(t,x,\Uv,V,\gamma,\alpha),\quad
\label{eqwavcta}
\\&
\dd t\Uv=\Gv(t,\Uv,V,\gamma,\alpha),\quad \DD tV=H(t,x,\Uv,V,\gamma,\alpha)V,
\label{eqwavctb}
\end{align}
\end{subequations}
such that  in the \(\alpha\gamma u\)-space the system~\eqref{eqwavct} is the same as the \pde~\cref{eqwavpde} to a residual~\Ord{\|u-\EE\|^p+\gamma^p+\alpha^p}, and such that the subspace~\EE\ is the tangent space of the manifold \(V=0\) at \(\gamma=\alpha=0\).
\begin{enumerate}[label={\ref{thmwavemap}}(\alph{enumi}), leftmargin=*
                 ,ref={\cref{thmwavemap}}(\alph{enumi})] %
\item Let \cE\ denote a \(\alpha\gamma u\)-domain in which the coordinate transform~\cref{eqwavcta} is a diffeomorphism containing~\EE, then \(V=0\) is an exact slow manifold of the dynamics of~\cref{eqwavct}: that is, a slow manifold is
\begin{equation}
u=u(t,x,\Uv,0,\gamma,\alpha)\quad \text{such that}\quad
\dd t\Uv=\Gv(t,\Uv,0,\gamma,\alpha).
\label{eqwavsm}
\end{equation}

\item Solutions near, but off the slow manifold with \(V\neq 0\)\,, generally evolve differently (due to wave-wave forcing of mean flow):
\begin{equation*}
\dd t\Uv=\Gv(t,\Uv,0,\gamma,\alpha)+\Ord{\|V\|^2}.
\end{equation*}

\end{enumerate}
\end{conjecture}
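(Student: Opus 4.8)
The plan is to mirror the backwards construction behind \cref{thmgsmb}, replacing its dissipative spectral gap by the oscillatory frequency gap that \cref{secsmwpde} makes available for wave systems. First I would exploit the linear structure: by \cref{thmsa0} the operator~\(\cL\) subject to the uncoupled edge conditions is self-adjoint in~\(\LL\), so its eigenfunctions form an orthonormal basis, and by \cref{thmdicho} exactly \(N\)~eigenvalues vanish (spanning~\(\EE\)) while all others satisfy \(\lambda\leq-\beta\). Linearising the wave \pde~\cref{eqwavpde} as \(u_{tt}=\cL u\), the temporal eigenvalues are the purely imaginary~\(\pm\i\omega\) with \(\omega=\sqrt{-\lambda}\): the zero eigenvalues span the slow subspace~\(\EE\) parametrised by~\(\Uv\), while the remaining modes oscillate at frequencies \(|\omega|\geq\sqrt\beta\) bounded away from zero. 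I would decompose \(u-u^*\) along these two complementary, \(\cL\)-invariant subspaces and parametrise the fast oscillatory complement by the field~\(V(x)\), so that at \(\gamma=\alpha=0\) the tangent space of \(V=0\) is precisely~\(\EE\), as the conjecture requires.

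Second, I would carry out the backwards normal-form construction order by order in \((\|u-\EE\|,\alpha,\gamma)\). Since \(f\)~and~\(g\) are smooth (\cref{assFandG}) they expand polynomially to order~\(p\); I would then build the polynomial coordinate transform~\cref{eqwavcta} and the polynomial right-hand sides \(\Gv\)~and~\(H\) of~\cref{eqwavctb} so that the transformed system acquires the prescribed block structure, solving a homological equation at each order. The decisive lever is the frequency gap \(0<\sqrt\beta\leq|\omega|\), equivalently the invertibility of~\(\cL\) on the fast complement: because the slow eigenvalue~\(0\) is separated from every fast eigenvalue, each term linear in~\(V\) appearing in the \(\Uv\)-equation can be removed by a near-identity transformation, the associated homological equation being solvable because~\(\cL\) is invertible on the fast subspace. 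This simultaneously renders the \(V\)-equation homogeneous, \(\DD tV=HV\), so that \(V=0\) is invariant, and leaves the only surviving coupling of the fast field into the mean at quadratic order, \(\Ord{\|V\|^2}\)---physically the \(\omega-\omega=0\) wave-wave forcing of mean flow. Truncating these expansions at order~\(p\) yields, exactly as in the proof of \cref{thmgsmb}, a residual \(\Ord{\|u-\EE\|^p+\gamma^p+\alpha^p}\) against \cref{eqwavpde}+\cref{eqsiecc}.

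Parts~(a) and~(b) then follow with little extra work. On a domain~\(\cE\) where the transform~\cref{eqwavcta} is a diffeomorphism---guaranteed near~\(\EE\) by the implicit function theorem, since the transform's linear part is the invertible spectral decomposition---the homogeneous \(V\)-equation makes \(\{V=0,\ \partial_tV=0\}\) an exactly invariant manifold, delivering the slow manifold~\cref{eqwavsm}. Part~(b) is then immediate from the earlier removal of all terms linear in~\(V\), which leaves \(\dd t\Uv=\Gv(t,\Uv,0,\gamma,\alpha)+\Ord{\|V\|^2}\).

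The genuine obstacle---and the reason for stating this as a conjecture rather than a theorem---is that the backwards invariant-manifold theory of~\cite{Hochs2019} underpinning \cref{thmgsmb} is proved for a dissipative dichotomy, \(\operatorname{Re}\lambda\leq-\beta<0\), which supplies both the invariant manifold within the requisite graded function spaces and its exponential attraction. For the wave \pde\ the temporal eigenvalues \(\pm\i\omega\) are purely imaginary, so the spectral gap is in frequency rather than decay rate: there is no attraction, and hence no emergence---consistent with part~(b), where off-manifold solutions merely persist and evolve differently. The formal order-by-order construction sketched above transfers essentially unchanged, and \(V=0\) is invariant by construction; but certifying it within a rigorous framework analogous to~\cite{Hochs2019}, and delineating a genuine diffeomorphism domain~\(\cE\), requires a backwards theory for subcentre manifolds that is not yet available---even the existence of such subcentre manifolds is known to be delicate \cite{Lorenz87}. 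The mechanism behind this delicacy is resonance among the fast frequencies: products of several fast modes oscillate at combination frequencies \(\sum_i\pm\omega_{k_i}\), and when such a combination is small but nonzero the corresponding homological equations carry small denominators, so controlling convergence and smoothness beyond any fixed finite order~\(p\)---and thereby securing a uniform domain~\(\cE\)---is where I expect the main difficulty to lie.
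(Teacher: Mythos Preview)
The paper does not prove this statement: it is explicitly a \emph{conjecture}, motivated by analogy with \cref{thmgsmb} and the backwards theory of \cite{Roberts2018a, Hochs2019}, and no proof is offered in the text. Your proposal therefore cannot be compared against a paper proof, because none exists.

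That said, your heuristic sketch is well aligned with the paper's stated motivation. The paper introduces the conjecture with ``inspired by the backwards theory of \cite{Roberts2018a, Hochs2019} \ldots\ we make the following backwards conjecture for the wave \pde~\cref{eqwavpde} that is analogous to \cref{thmgsmb} for dissipative systems'', which is precisely the line you take: replace the dissipative spectral gap by the frequency gap \(|\omega|\geq\sqrt\beta\), carry out an order-by-order normal-form construction, and obtain the block structure~\cref{eqwavctb} with \(V=0\) invariant. You also correctly identify the reason the paper leaves this as a conjecture---the backwards invariant-manifold machinery of \cite{Hochs2019} is built on a dissipative dichotomy, and subcentre slow manifolds for purely oscillatory spectra are known to be delicate \cite{Lorenz87}. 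Your remarks on small divisors from near-resonant combination frequencies go somewhat beyond what the paper states, but are a reasonable articulation of where a rigorous proof would face difficulty.

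In short: there is nothing to grade against, but your argument is a faithful expansion of the paper's own rationale for the conjecture, and you are right that it is not a proof.
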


Consequently, we contend that the methodology developed here for learning spatially discrete, finite dimensional, accurate, models of dissipative \pde{}s may be also usefully applied to many wave-like \pde{}s~\cref{eqwavpde}, provided we accept the potential for microscale wave-wave interactions be unresolved in the model.
In order to resolve such wave-wave forcing of the mean flow, one would have to include the amplitude modulation of the fast waves into the list of macroscale variables of interest.

\subsection{Preserving self-adjointness}
\label{secpsa}

We now return to reaction-advection-diffusion \pde{}s~\cref{eqgenpde} embedded into spatial elements by the coupling conditions~\cref{eqsiecc} (for general \(\gamma\)).
\cref{thmgsmb} proves the existence of systems arbitrarily close to~\cref{eqgenpde}+\cref{eqsiecc}---systems that have a constructible slow manifold that forms an emergent discretisation of~\cref{eqgenpde}. 
However, \cref{thmgsmb} does not address how well the constructed slow manifold discretisation models the original \pde{}~\cref{eqgenpde}.
This article proves two crucial properties of the slow manifold modelling. 
Firstly, this section establishes that the particular edge conditions~\cref{eqsiecc} preserve self-adjointness and thus preserves this fundamental symmetry in the discretisation. 
Secondly, \cref{secphoc1} proves that the slow manifold of \cref{eqgenpde}+\cref{eqsiecc}, the discrete model, is \emph{consistent} \text{to the \pde~\cref{eqgenpde}.}

Self-adjointness is a property of linear operators, but the \pde~\cref{eqgenpde} is nonlinear.
Consequently, we establish self-adjointness for linear perturbations about a set of base states of~\cref{eqgenpde}, with coefficient~\(\alpha=0\) to eliminate nonlinearity~\(g\) from consideration.
That is, as before, we linearise \pde{}~\cref{eqgenpde} about the equilibria \(u_j(t,x)=U_j\) constant in each element~\(\XX_j\) (\cref{thmequil}), to obtain the linearised \pde{}~\eqref{eqpdelin}.
\cref{thmsa0} established self-adjointness of the diffusion operator~\(\cL\) of~\eqref{eqpdelin} subject to the \(\gamma=0\) edge conditions~\cref{eqsiecc0}, and the following \cref{thmsa} generalises it to edge conditions~\cref{eqsiecc} with \(\gamma\neq0\). 

\begin{lemma}[self-adjoint] \label{thmsa}
The differential operator of the system~\cref{eqpdelin,eqsiecc}, namely \(\cL=[\kappa_j(x)\partial_x\cdot]_x\) on~\LL\ and subject to~\cref{eqsiecc}, and in the usual inner product \(\left<\tu,u\right>:=\sum_{j\in\JJ}\int_{\XX_j} \tu_j u_j\,dx\)\,, is self-adjoint for almost every real~\(\gamma\) and~\(\theta\). 
\end{lemma}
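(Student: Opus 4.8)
The plan is to reduce self-adjointness to the vanishing of a boundary bilinear form, exactly as in \cref{thmsa0}, and then to show that the full edge conditions~\cref{eqsiecc} (not merely their \(\gamma=0\) special case~\cref{eqsiecc0}) force that form to vanish. First I would integrate by parts once in each element, precisely as in the proof of \cref{thmsa0}, to obtain
\begin{equation*}
\left<\tu,\cL u\right>-\left<\cL\tu,u\right>
=\sum_{j\in\JJ}\big(-\tu_j^R f_j^R+u_j^R\tf_j^R+\tu_j^L f_j^L-u_j^L\tf_j^L\big)=:B(\tu,u),
\end{equation*}
where \(f_j=-\kappa_j u_{jx}\) and \(\tf_j=-\kappa_j\tu_{jx}\). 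Then \(\cL\) is symmetric on its domain iff \(B(\tu,u)=0\) for all \(u,\tu\) satisfying~\cref{eqsiecc}, and self-adjointness follows once this symmetric domain is shown to coincide with that of~\(\cL^*\). The \(\gamma=0\) argument cannot be reused verbatim, since now \(u_j^R\neq u_j^L\) in general; the whole content lies in exploiting the precise algebraic structure of~\cref{eqsiecc}.

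Next I would recast the edge conditions as operator relations on the cyclic index \(j\in\JJ\). Writing \(S\) for the circulant shift \((Sv)_j=v_{j+1}\) (so \(S^{-1}=S^T\) and \(S^N=I\) encodes the stated periodicity \(u^e_{j+N}=u^e_j\)) and collecting the edge values into vectors \(u^R,u^L,f^R,f^L\in\RR^N\) (and likewise the tilde fields), condition~\cref{eqiecca} rearranges to \(Pu^R=Qu^L\) and condition~\cref{eqieccb} to \(\tilde Pf^R=\tilde Qf^L\), where
\begin{align*}
P&=[1-\tfrac12\gamma(1+\theta)]I+\tfrac12\gamma(1+\theta)S^{-1},
&Q&=[1-\tfrac12\gamma(1-\theta)]I+\tfrac12\gamma(1-\theta)S,
\\
\tilde P&=[1-\tfrac12\gamma(1-\theta)]I+\tfrac12\gamma(1-\theta)S^{-1},
&\tilde Q&=[1-\tfrac12\gamma(1+\theta)]I+\tfrac12\gamma(1+\theta)S.
\end{align*}
The structural facts I would highlight are that all four operators are polynomials in~\(S\), hence mutually commute, and that the deliberate sign-flip of the \(\theta\)-terms between~\cref{eqiecca} and~\cref{eqieccb} yields exactly \(\tilde P=Q^T\) and \(\tilde Q=P^T\). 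Thus the flux condition is \(Q^Tf^R=P^Tf^L\).

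The heart of the proof is then a cancellation in pairs. For \((\gamma,\theta)\) with \(P\) invertible, set \(M:=P^{-1}Q\); commutativity gives \(u^R=Mu^L\) and \(\tu^R=M\tu^L\) from the field conditions, while \(Q^Tf^R=P^Tf^L\) rearranges (again using commutativity) to \(f^L=(P^T)^{-1}Q^Tf^R=M^Tf^R\) and likewise \(\tf^L=M^T\tf^R\). Writing the four sums in \(B\) as Euclidean pairings \(\left<\cdot,\cdot\right>\) on~\(\RR^N\) and using that \(M^T\) is the Euclidean adjoint of~\(M\),
\begin{align*}
\left<\tu^L,f^L\right>&=\left<\tu^L,M^Tf^R\right>=\left<M\tu^L,f^R\right>=\left<\tu^R,f^R\right>,
\\
\left<u^L,\tf^L\right>&=\left<u^L,M^T\tf^R\right>=\left<Mu^L,\tf^R\right>=\left<u^R,\tf^R\right>,
\end{align*}
so the four terms of \(B\) cancel in the pairs \(-\left<\tu^R,f^R\right>+\left<\tu^L,f^L\right>=0\) and \(\left<u^R,\tf^R\right>-\left<u^L,\tf^L\right>=0\), whence \(B(\tu,u)=0\).

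Finally, the ``almost every'' qualifier pins down where the argument can break. The only obstruction is invertibility of the circulant~\(P\): its eigenvalues are \([1-\tfrac12\gamma(1+\theta)]+\tfrac12\gamma(1+\theta)\omega_k\) over the \(N\)th roots of unity~\(\omega_k\), vanishing only on a measure-zero locus of \((\gamma,\theta)\)---the same kind of exceptional values as the singular-\(V\) case discussed near~\cref{remzigzag}---and off this locus \(P\) is invertible and the above applies, giving self-adjointness for almost every real \(\gamma\) and~\(\theta\). I expect the main obstacle to be twofold. First, establishing the transpose identities \(\tilde P=Q^T\), \(\tilde Q=P^T\): it is precisely the opposite placement of the \(\theta\)-terms in~\cref{eqiecca} versus~\cref{eqieccb} that makes the field operator and the flux operator mutually adjoint, and this adjoint duality is what produces the cancellation---so this is where I would be most careful. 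Second, upgrading symmetry of the form \(B\) to genuine self-adjointness by verifying that the \(2N\) constraints~\cref{eqsiecc} cut out a subspace of the correct codimension in the \(4N\) edge values (a maximal isotropic subspace for the boundary form), so that \(\cL^*\) and \(\cL\) share the same domain.
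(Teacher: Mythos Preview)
Your approach is essentially the paper's: both integrate by parts to the boundary bilinear form, encode~\cref{eqsiecc} via commuting circulants (your \(P,Q\) are the paper's \(C_+,C_-^T\)), and use invertibility of one circulant together with the transpose duality between the field and flux conditions to effect the cancellation. The only substantive difference is that the paper also runs the argument assuming \(C_-\) invertible and then computes \(\det C_\pm\) to show that both fail simultaneously only at the isolated point \(N\) even, \(\gamma=1\), \(\theta=0\), thereby sharpening your ``measure-zero locus'' to a single exceptional point; your closing remark about upgrading symmetry to genuine self-adjointness via a maximal-isotropic codimension count is a care the paper's proof does not take.
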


The exception in ``almost every'' is the isolated case when the number of elements is even, \(\gamma=1\)\,, and \(\theta=0\) that is associated with the unphysical nature of the zigzag mode as discussed by \cref{remzigzag}.


\begin{proof}
Let's consider the edge conditions of the \(N\)~elements as \(N\times N\)~matrix equations with coupling matrices~\(C_\pm\). 
For each of the patch right and left edges, \(e\in\{R,L\}\) respectively, define the field vector \(\uv^{e}:=(u_1^{e},\ldots,u_N^{e})\) and the flux vector \(\fv^{e}:=(f_1^{e},\ldots,f_N^{e})\)\,. 
Then, using superscript~\(T\) to denote matrix transpose, the edge conditions~\eqref{eqsiecc} are
\begin{align}
&C_+\uv^R=C_-^T\uv^L,\quad 
C_-\fv^R=C_+^T\fv^L,\label{eq:LRcc}
\quad\text{for circulant matrices}
\\&\nonumber
C_\pm:=\begin{bmatrix}
1-\tfrac{\gamma}{2}(1\pm\theta) & 0 & \cdots & 0 & \tfrac{\gamma}{2}(1\pm\theta)\\
 \tfrac{\gamma}{2}(1\pm\theta) & \ddots & \ddots  &  & 0\\
0 & \ddots  & \ddots & \ddots & \vdots \\
\vdots &  \ddots & \ddots &\ddots &0\\
 0& \cdots& 0 &  \tfrac{\gamma}{2}(1\pm\theta) & 1-\tfrac{\gamma}{2}(1\pm\theta) 
\end{bmatrix}.
\end{align}
We firstly prove that these edge conditions preserve self-adjointness when either~\(C_+\) or~\(C_-\) is invertible, and then secondly when neither is invertible.
This proof requires the commutativity  
\begin{equation}
C_+^TC_-=C_-C_+^T. \label{eq:id}
\end{equation}
This commutativity follows since~\(C_\pm\), and their transposes, are circulant \cite[Thm.~3.1(1)]{Gray2006}, and holds for any two circulant matrices.

First consider the case when \(C_+\) is invertible. 
Then \(C_+^T\) is also invertible and circulant, so by commutativity~\eqref{eq:id}, \(C_-C_+^{-1,T}=C_+^{-1,T}C_-\)\,.
Now, for every~\(u,\tu\in\LL\)\,,
\begin{align*}
&\left<\tu,\cL u\right> - \left<\cL\tu, u\right>
\\&=\sum_{j\in\JJ}\int_{\XX_j}\tu_j[\kappa_j(x)u_{jx}]_x-u_j[\kappa_j(x)\tu_{jx}]_x\,dx
\\&\qquad(\text{then integrate by parts})
\\&=\sum_{j\in\JJ} \big[\tu_j \kappa_j(x)u_{jx}-u_j \kappa_j(x)\tu_{jx}\big]_{L_j}^{R_j}
\\&\qquad(\text{then as flux }f_j:=-\kappa_j(x)u_{jx}\text{ and }\tf_j:=-\kappa_j(x)\tu_{jx})
\\&=\sum_{j\in\JJ} \big[ -\tu_j^R f_j^R+u_j^R \tf_j^R 
+\tu_j^L f_j^L-u_j^L \tf_j^L\big]
\\&=-\tilde{\uv}^{R,T}\fv^R+\uv^{R,T}\tilde{\fv}^R+\tilde{\uv}^{L,T}\fv^L-\uv^{L,T}\tilde{\fv}^L
\\&\qquad(\text{then, since \(C_+\) is invertible})
\\&=-\tilde{\uv}^{R,T}C_+^{T}C_+^{-1,T}\fv^R+\tilde{\uv}^{L,T}\fv^L+\uv^{R,T}C_+^{T}C_+^{-1,T}\tilde{\fv}^R-\uv^{L,T}\tilde{\fv}^L
\\&=-[C_+\tilde{\uv}^{R}]^TC_+^{-1,T}\fv^R+\tilde{\uv}^{L,T}\fv^L+[C_+\uv^{R}]^TC_+^{-1,T}\tilde{\fv}^R-\uv^{L,T}\tilde{\fv}^L
\\&\qquad(\text{apply edge conditions~\eqref{eq:LRcc}})
\\&=-[C_-^T\tilde{\uv}^{L}]^TC_+^{-1,T}\fv^R+\tilde{\uv}^{L,T}\fv^L+[C_-^T\uv^{L}]^TC_+^{-1,T}\tilde{\fv}^R-\uv^{L,T}\tilde{\fv}^L
\\&=-\tilde{\uv}^{L,T}C_-C_+^{-1,T}\fv^R+\tilde{\uv}^{L,T}\fv^L+\uv^{L,T}C_-C_+^{-1,T}\tilde{\fv}^R-\uv^{L,T}\tilde{\fv}^L
\\&\qquad(\text{apply commutativity~\eqref{eq:id}})
\\&=-\tilde{\uv}^{L,T}C_+^{-1,T}C_-\fv^R+\tilde{\uv}^{L,T}\fv^L+\uv^{L,T}C_+^{-1,T}C_-\tilde{\fv}^R-\uv^{L,T}\tilde{\fv}^L
\\&\qquad(\text{apply edge conditions~\eqref{eq:LRcc}})
\\&=-\tilde{\uv}^{L,T}C_+^{-1,T}C_+^T\fv^L+\tilde{\uv}^{L,T}\fv^L+\uv^{L,T}C_+^{-1,T}C_+^T\tilde{\fv}^L-\uv^{L,T}\tilde{\fv}^L
\\&=0\,.
\end{align*}
Thus \(\cL\) is self-adjoint when \(C_+\)~is invertible.

Second, consider the case when \(C_-\)~is invertible.
The detailed argument directly corresponds to the previous case, so we present a summary:
\begin{align*}
&\langle \tilde{u},\mathcal{L}u\rangle-\langle \mathcal{L}\tilde{u},u\rangle
\\&=-\tilde{\uv}^{R,T}\fv^R+\uv^{R,T}\tilde{\fv}^R+\tilde{\uv}^{L,T}\fv^L-\uv^{L,T}\tilde{\fv}^L
\\&=-\tilde{\uv}^{R,T}\fv^R+[C_-^T\tilde{\uv}^{L}]^TC_-^{-1}\fv^L+\uv^{R,T}\tilde{\fv}^R-[C_-^T\uv^{L,T}]C_-^{-1}\tilde{\fv}^L\
\\&=-\tilde{\uv}^{R,T}\fv^R+[C_+\tilde{\uv}^{R}]^TC_-^{-1}\fv^L+\uv^{R,T}\tilde{\fv}^R-[C_+\uv^{R}]^TC_-^{-1}\tilde{\fv}^L
\\&=-\tilde{\uv}^{R,T}\fv^R+\tilde{\uv}^{R,T}C_-^{-1}C_+^T\fv^L+\uv^{R,T}\tilde{\fv}^R-\uv^{R,T}C_-^{-1}C_+^T\tilde{\fv}^L
\\&=-\tilde{\uv}^{R,T}\fv^R+\tilde{\uv}^{R,T}C_-^{-1}C_-\fv^R+\uv^{R,T}\tilde{\fv}^R-\uv^{R,T}C_-^{-1}C_-\tilde{\fv}^R
\\&=0\,.
\end{align*}
Thus \(\cL\) is self-adjoint when \(C_-\)~is invertible.

Now consider the case that neither \(C_+\)~nor~\(C_-\) is invertible, so that their determinants are zero.  
By a first row expansion,  \(\det C_\pm=\left[1-\tfrac{\gamma}{2}(1\pm\theta)\right]^N-(-1)^{N}\left[\tfrac{\gamma}{2}(1\pm\theta)\right]^N\)\,.
The determinant is zero only when \(\left[1-\tfrac{\gamma}{2}(1\pm\theta)\right]^N=\left[-\tfrac{\gamma}{2}(1\pm\theta)\right]^N\)\,.
This cannot occur in the case \(\gamma(1\pm\theta)=0\) as then the \(\rhs=0\) and the \(\lhs=1\): consequently we can divide by the \rhs\ and seek when 
\(\big[1-\frac2{\gamma(1\pm\theta)}\big]^N=1\)\,.
For real~\(\gamma,\theta\), the zero determinant thus can only occurs when \(N\)~is even and \(1-\frac2{\gamma(1\pm\theta)}=-1\)\,.
That is, when \(N\)~is even and \(\gamma(1\pm\theta)=1\)\,.
Hence the previous arguments which depend on either \(C_+\)~or~\(C_-\) being invertible only fail when \(N\)~is even and both \(\gamma(1\pm\theta)=1\)\,.
This last pair of equations is only satisfied when \(\gamma=1\) and \(\theta=0\)\,.
Therefore, the operator~\cL\ on~\LL\ is self-adjoint for every real~\(\gamma\) and every real~\(\theta\), except perhaps for the isolated specific case of even~\(N\), \(\gamma=1\) and \(\theta=0\)\,.
%
\end{proof}

The proofs of this section hold for periodic boundary conditions for~\(u(t,x)\) on~\XX.
We expect further research to establish self-adjointness for other common boundary conditions for~\(u(t,x)\) on~\XX, such as Dirichlet and Neumann.

\section{Prove high-order consistency in 1D space}
\label{secphoc1}

\cref{seclin} establishes the in-principle existence and emergence of an exact discrete closure to the dynamics of \pde{}s in the class~\cref{eqgenpde}, a closure that also preserves self-adjointness (\cref{secpsa}).
An issue is whether the systematic approximations, established by \cref{thmgsmb}, are accurate.
As evidence for their accuracy, this section proves that the  constructed approximate spatially discrete models are consistent to the original \pde~\cref{eqgenpde} to any specified \text{order of error.}

As a `stepping stone' to the second-order diffusion \pde, and because of its interest in its own right, \cref{secpcmwave1} proves consistency of cognate discrete models of a first-order wave \pde.
\cref{Xsecpchdd} then invokes similar arguments to prove consistency of discrete models of diffusion-like \pde{}s.
In this section we restrict attention to homogeneous and autonomous \pde{}s, that is, \(f\) and~\(g\) are here assumed independent of~\(t,x\).

\cref{sechhd} uses an embedding to extend the consistency proof to the homogenisation of systems with microscale heterogeneity.

\subsection{Example: consistent discretion of the first-order wave~PDE}
\label{sechdudwpde}

As a first step, corresponding to \cref{eghocdd} for diffusion, here we analogously construct discrete models of the first-order wave \pde\ 
\begin{equation}
u_t+cu_x=0\,.
\label{equniwpde}
\end{equation}
The spatially discrete models are obtained using the elements and coupling introduced in \cref{secsapc}, and are constructed by computer algebra code (\cref{sechochwave1}) that applies deep recursive refinement to algebraically learn the spatial discretisations as a slow manifold in a power series in inter-element coupling~\(\gamma\).
The code also verifies the high-order consistency between the discrete model and the wave \pde~\eqref{equniwpde}.
In addition, this example explores the upwind\slash downwind character induced by the parameter~\(\theta\) in the inter-element \text{coupling edge conditions~\cref{eqsiecc}.}

A major difference between the diffusion operator \(\cL=\partial_x[\kappa_j(x)\partial_x]\) in~\cref{eqcL} and the advection operator \(-c\partial_x\) in~\cref{equniwpde} is that (albeit depending upon boundary conditions) the diffusion operator is self-adjoint and the advection operator is not (it is anti-symmetric).
Thus the issue of self-adjointness is \text{not discussed here.}

For the wave \pde~\eqref{equniwpde} with inter-element edge conditions~\cref{eqiecca} (flux coupling~\eqref{eqieccb} does not apply here), the subspace~\EE\ of piecewise constant equilibria still exists.  
A proof is a simpler version of that given for \cref{thmequil}: for every piecewise constant field \(u^*(x)=U_j\) in~\(\XX_j\) the wave \pde~\eqref{equniwpde} is simply \(u_t=0\) and thus \(\Uv=(U_1,U_2,\ldots,U_N)\) are equilibria of the \pde\ on~\(\tilde\XX\) with conditions~\cref{eqiecca} satisfied for \(\gamma=0\)\,.

For reasons corresponding to those discussed by \cref{secsmwpde}, the spatial discretisation of the wave \pde~\eqref{equniwpde} arises as a slow subcentre manifold on coupled elements.
Since~\(u_j(t,x)\) denotes the field~\(u(t,x)\) on the \(j\)th~element,  \(u_j\)~is to satisfy the wave \pde~\eqref{equniwpde}, namely \(u_{jt}=-cu_{jx}\) on~\(\XX_j\).
Let the elements be coupled with edge conditions~\cref{eqiecca} with arbitrary coupling parameter~\(\gamma\).
Recall that the measure of~\(u_j(t,x)\) in each element is almost immaterial \cite[Lemma~5.1]{Roberts2014a}, so here we choose each macroscale parameter~\(U_j(t)\) to be the element average.

For the case of equi-sized elements, \(H_j=H\)\,, the code of \cref{sechochwave1} constructs the slow manifold as a power series in inter-element coupling parameter~\(\gamma\). 
The tuning parameter~\(\theta\) provides a range of alternative discrete models for the macroscale dynamics.
The code finds the wave \pde~\eqref{equniwpde} with inter-element coupling~\cref{eqiecca} has a slow manifold with evolution governed \text{by the \ode{}s}
\begin{subequations}\label{eqepdewave}
\begin{align}
\dot U_j&=-\frac{\gamma c}{2H}(U_{j+1}-U_{j-1})+\theta\frac{\gamma c}{2H}(U_{j+1}-2U_j+U_{j-1})+\Ord{\gamma^2},
\label{eqlowgamma}
\end{align}
in terms of the element average values~\(U_j\).
At full coupling \(\gamma=1\)\,, the parameter~\(\theta\) includes the following alternatives:
\(\theta=0\) gives the anti-symmetric, centred difference, form \(\dot U_j\approx-c(U_{j+1}-U_{j-1})/(2H)\);
\(\theta=1\) gives the upwind (when \(c>0\)) difference \(\dot U_j\approx- c(U_j-U_{j-1})/H\)\,;
whereas \(\theta=-1\) gives the downwind (when \(c>0\)) difference \(\dot U_j\approx- c(U_{j+1}-U_j)/H\) (and vice versa when \(c<0\)).

Such discrete models of the macroscale evolution are learnt in conjunction with the subgrid field structure of the slow manifold.
The learnt subgrid field corresponding to the low-order~\eqref{eqlowgamma} is the linear \(u_j=U_j
+\gamma(\xi-\tfrac12)(\mu_j\delta_j-\tfrac12\theta_j\delta_j^2)U_j
+\Ord{\gamma^2}\)\,,
written in terms of the local subgrid variable \(\xi:=(x-L_j)/H\), and centred mean~\(\mu_j\) and difference~\(\delta_j\) operators (\cref{tblopids}).
Higher-order models have more detailed subgrid fields representing more effects of both inter-element communication and subgrid scale physics. 

The computer algebra of \cref{sechochwave1} easily computes to higher-order in~\(\gamma\).
Deep iterative refinement learns that the macroscale evolution~\eqref{eqlowgamma} on the slow manifold is refined to the following, where~\(U_j\) is omitted for brevity:
\begin{align}
\tfrac{H}{c}\partial_t&
= -\gamma\mu_j\delta_j
+\tfrac12\theta\gamma(1-\gamma)\delta_j^2
+\tfrac14\gamma^2\big[1-\tfrac13\gamma+\theta^2(1-\gamma)\big]\mu_j\delta_j^3
\nonumber\\&\quad{}
-\tfrac18\theta\gamma^2(1-\gamma)(2-\gamma-\theta^2\gamma)\delta_j^4
-\tfrac1{16}\gamma^3\big[\tfrac43-\gamma +\theta^2(4-6\gamma)-\theta^4\gamma\big]\mu_j\delta_j^5
\nonumber\\&\quad{}
+\Ord{\gamma^5,\delta_j^6}
.\label{eqgopwave1}
\end{align}
The \(\gamma\)-dependence has the appealing feature that when evaluated at full coupling a derived discrete model such as~\eqref{eqgopwave1} is independent of the tuning~\(\theta\) up to some order in~\(\delta\): 
here, for example, at \(\gamma=1\) the evolution~\eqref{eqgopwave1} becomes simply \(\tfrac Hc\partial_t=-\mu_j\delta_j+\tfrac16\mu_j\delta_j^3+\Ord{\delta_j^5}\)\,.
That such a discrete model is consistent with the wave \pde~\eqref{equniwpde} is seen by the equivalent \pde\ to the discrete model.
For example, constructing~\eqref{eqgopwave1} to next order, errors~\Ord{\gamma^6}, and substituting expansions for \(\delta_j=\delta_x=2\sinh(H\partial_x/2)\) and \(\mu_j=\mu_x=\cosh(H\partial_x/2)\) (\cref{tblopids}; when operating on macroscale grid field~\(U_j\)), the equivalent \pde\ of the \text{discrete model~\eqref{eqgopwave1} is}
{\renewcommand{\Dn}[3]{\partial_{#1}^{#2}#3}
\begin{align}
\frac1c \partial_tU&
=-\gamma \partial_xU
+\tfrac12\gamma(1-\gamma)\theta H\DD xU
-\tfrac1{12}\gamma(1-\gamma)(2-\gamma-3\theta^2\gamma)H^2\Dn x3U
\nonumber\\&\quad{}
+\tfrac1{24}\gamma(1-\gamma)\theta\big[1-6\gamma +3(1+\theta^2)\gamma^2\big]H^3\Dn x4U
\nonumber\\&\quad{}
-\tfrac1{240}\gamma(1-\gamma)\big[2 -(13+15\theta^2)\gamma 
+12(1+5\theta^2)\gamma^2 
\nonumber\\&\qquad{}
-3(1+10\theta^2+15\theta^4)\gamma^3 \big]H^4\Dn x5U
\quad{}+\Ord{H^5\partial_x^6U}
.\label{eqepdewave1}
\end{align}
}
\end{subequations}
At full coupling \(\gamma=1\)\,, due to the factors~\((1-\gamma)\), this equivalent \pde\ reduces to the required wave \pde~\eqref{equniwpde} for every value of~\(\theta\).
The order of error in this consistency appears proportional to the chosen order of \text{coupling in~\(\gamma\).}

\paragraph{Perturbations}
One can perturb the wave \pde~\eqref{equniwpde} and the computer algebra still constructs a discrete model that is consistent to high-order to the perturbed \pde.
The code of \cref{sechochwave1} caters for perturbations to the first-order wave \pde~\eqref{equniwpde} such as
\begin{equation*}
\D tu+c\D xu=\alpha\left[c_0u+c_2\DD xu +c_3\Dn x3u +c_4\Dn x4u \right].
\end{equation*}
\cref{sechochwave1} also codes provision for a microscale \emph{lattice} version of the advection, namely \(u_t=-c\big[u(t,x+d)-u(t,x-d)\big]/(2dH)\)\,.
The computer analysis in both cases learns discrete macroscale models with high-order consistency to the corresponding given perturbed wave system.

\begin{remark}\label{remcom}
The consistency results of \cref{secpcmwave1,Xsecpchdd} use deductions expressed in the operator algebra of spatial operators \(\partial_x\), \(\delta_x\) and~\(\mu_x\).
The application of these operators evaluates fields at specific spatial points, but exactly how depends on the scale at which they are operating. 
At the microscale~\(u_j\) is governed by a given \pde{} (such as~\eqref{eqgenpde} in the general case) and this \pde\ justifies the evaluation of such operators.
For example, in the context of the wave \pde, \(\partial_tu_j=-c\partial_xu_j\)\,, effectively \(\partial_x=-\tfrac1c\partial_t\) and \(\delta_x=2\sinh[H\partial_x/2]=-2\sinh[H\partial_t/(2c)]\)  (\cref{tblopids}). 
Thus such spatial operators can be interpreted as transformed to time derivatives via the \pde, evaluated at a point, and then transformed back via the \pde\ to spatial operators.
Often, when operating at the microscale, we convert to the subgrid variable \(\xi=(x-L_i)/H\) with \(\partial_{\xi}=\partial_x/H\)\,, \(\delta_\xi=\delta_x\) and \(\mu_\xi=\mu_x\)\,.
In contrast, the macroscale grid fields~\(U_j\) are discrete so there is no formal spatial derivative, but the main aim of this article is to construct an approximate macroscale \pde\ from a discretised evolution equation of macroscale grid fields on elements~\(j\), and to this end we define the spatial derivative at the macroscale in terms of finite-difference equations of element indices~\(j\). 
For example,  to obtain~\eqref{eqepdewave1} from~\eqref{eqgopwave1} we substitute \(\delta_j=\delta_x=2\sinh(H\partial_x/2)\)\,.
\end{remark}

\subsection{Prove consistency to the first-order wave PDE}
\label{secpcmwave1}

As indicated by its equivalent \pde~\eqref{eqepdewave1}, we here prove that the holistic discretisation~\cref{eqgopwave1} is consistent to the first-order wave \pde~\cref{equniwpde}, for every order of analysis. 
\cref{lemcc1eoca,lemwavemed} establish two key identities for the subsequent consistency \cref{thmhdcuwpde,thmfocwave},  but first we prove coupling identities for derivatives of the subgrid fields which apply in \text{subsequent lemmas.}

\begin{lemma}
\label{lemwaveds}
Let the subgrid fields~\(u_j\) be smooth, satisfy the wave \pde~\cref{equniwpde}, and be coupled by~\cref{eqiecca}.
Then the edge conditions~\cref{eqiecca} hold for all spatial derivatives of~\(u_j\).
\end{lemma}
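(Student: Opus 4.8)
The plan is to use the wave \pde~\cref{equniwpde} to trade spatial derivatives for time derivatives, and to exploit that the edge condition~\cref{eqiecca} is a linear, homogeneous relation among edge values that holds identically in time~\(t\). Since an identity in~\(t\) may be differentiated in~\(t\) as often as we please, I would differentiate~\cref{eqiecca} repeatedly in time and then translate each resulting time derivative back into a spatial derivative through the \pde, thereby recovering~\cref{eqiecca} for the derivative fields.

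The first step is the elementary identity \(\partial_t^n u_j=(-c)^n\partial_x^n u_j\), valid on each element~\(\XX_j\) and, by the assumed smoothness of~\(u_j\) up to the boundary, also in the edge limits \(x\to L_j,R_j\). This is proved by induction on~\(n\): the base case \(n=1\) is the \pde~\(u_{jt}=-cu_{jx}\) itself, and the inductive step uses commutativity of mixed partials for smooth fields, \(\partial_t^{n+1}u_j=\partial_t[(-c)^n\partial_x^n u_j]=(-c)^n\partial_x^n u_{jt}=(-c)^{n+1}\partial_x^{n+1}u_j\).

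The second step applies \(d^n/dt^n\) to~\cref{eqiecca}. Because each edge value \(u_k^e(t)\) is evaluated at a \emph{fixed} edge point, time-differentiation commutes with edge evaluation, \(\tfrac{d^n}{dt^n}u_k^e=(\partial_t^n u_k)^e\), so the differentiated identity is the same linear combination of the quantities~\((\partial_t^n u_k)^e\). Substituting \((\partial_t^n u_k)^e=(-c)^n(\partial_x^n u_k)^e\) from the first step makes the common factor~\((-c)^n\) appear in every term, and since~\cref{eqiecca} is linear and homogeneous this nonzero factor (using \(c\neq0\)) divides out, leaving precisely~\cref{eqiecca} with each~\(u_k\) replaced by~\(\partial_x^n u_k\). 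As~\(n\) is arbitrary, the edge conditions hold for all spatial derivatives. I expect the only delicate point to be the commutation of the \(t\)-derivative with the edge-value limit and the transfer of the \pde\ identity to the edges, both of which are secured by the smoothness hypothesis; the remainder is a direct linear manipulation.
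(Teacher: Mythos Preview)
Your proof is correct and follows essentially the same approach as the paper: both exploit the wave \pde\ to convert spatial derivatives into time derivatives and then use that the edge condition~\cref{eqiecca} holds identically in~\(t\), so its time derivatives vanish. The only organisational difference is that the paper inducts one derivative at a time (showing the expression for~\(u_j^{(n)}\) equals \(-\tfrac1c\partial_t\) of the expression for~\(u_j^{(n-1)}\)), whereas you first establish \(\partial_t^n u_j=(-c)^n\partial_x^n u_j\) in one lemma and then apply \(d^n/dt^n\) to~\cref{eqiecca} in a single stroke; both routes rely on the same smoothness to pass the \pde\ identity to the edges and both implicitly require \(c\neq0\).
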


\begin{proof}
Denote the \(n\)th derivative \(u_j^{(n)}:=\partial_x^nu_j\)\,.
Proceed via induction. 
For every \(n\geq1\) consider, recalling superscripts~\(R,L\) denote evaluation at the right\slash left edge of an element,
\begin{align*}
&c(1-\tfrac12\gamma)(u_j^{(n)R}-u_j^{(n)L})-\tfrac c2\gamma(u_{j+1}^{(n)L}-u_{j-1}^{(n)R})
\\&{}-\tfrac c2\gamma\theta(u_j^{(n)L}+u_j^{(n)R})+\tfrac c2\gamma\theta(u_{j+1}^{(n)L}+u_{j-1}^{(n)R})
\\&\quad(\text{due to the identity }u_j^{(n)}=\partial_xu_j^{(n-1)})
\\&=c(1-\tfrac12\gamma)(\partial_xu_j^{(n-1)R}-\partial_xu_j^{(n-1)L})-\tfrac c2\gamma(\partial_xu_{j+1}^{(n-1)L}-\partial_xu_{j-1}^{(n-1)R})
\\&{}-\tfrac c2\gamma\theta(\partial_xu_j^{(n-1)L}+\partial_x^2u_j^{(n-1)R})+\tfrac c2\gamma\theta(\partial_xu_{j+1}^{(n-1)L}+\partial_xu_{j-1}^{(n-1)R})
\\&\quad(\text{then using the \pde\ as in \cref{remcom}})
\\&=-(1-\tfrac12\gamma)(\partial_tu_j^{(n-1)R}-\partial_tu_j^{(n-1)L})+\tfrac12\gamma(\partial_tu_{j+1}^{(n-1)L}-\partial_tu_{j-1}^{(n-1)R})
\\&{}+\tfrac12\gamma\theta(\partial_tu_j^{(n-1)L}+\partial_tu_j^{(n-1)R})-\tfrac12\gamma\theta(\partial_tu_{j+1}^{(n-1)L}+\partial_tu_{j-1}^{(n-1)R})
\\&=-\partial_t\left\{(1-\tfrac12\gamma)(u_j^{(n-1)R}-u_j^{(n-1)L})-\tfrac12\gamma(u_{j+1}^{(n-1)L}-u_{j-1}^{(n-1)R})\right.
\\&{}\left.-\tfrac12\gamma\theta(u_j^{(n-1)L}+u_j^{(n-1)R})+\tfrac12\gamma\theta(u_{j+1}^{(n-1)L}+u_{j-1}^{(n-1)R})\right\}
\\&=\partial_t\left\{0\right\}=0\,,
\end{align*}
where the penultimate equality above holds provided \(u^{(n-1)}_j\) satisfies~\cref{eqiecca}.
By induction, since \(u^{(0)}\) satisfies~\cref{eqiecca} then so does~\(u^{(n)}\) for every~\(n\geq0\)\,.
\end{proof}

Now we recast the coupling condition so the spatial difference is on the left-hand side and all coupling terms are on the right-hand side.

\begin{lemma}\label{lemcc1eoca}
Let all elements be of length \(H_j=H\)\,. 
Define the mid-element locations \(X_j:=(R_j+L_j)/2\)\,.
Then the inter-element edge condition~\cref{eqiecca} is equivalent to the coupling condition
\begin{equation}
\delta_x u_j
=\frac{\gamma(\mu_j-\tfrac12\theta\delta_j)\delta_j}{\sqrt{1+\tfrac12\gamma\delta_j^2-\gamma\theta\mu_j\delta_j+\tfrac14\gamma^2(\theta^2-1)\delta_j^2}}u_j
\quad\text{at }x=X_j\,.
\label{eqieccat}
\end{equation}
Further, this identity also holds for every spatial derivative \(\partial_x^n u_j\) evaluated at the mid-element point \(x=X_j\)\,.
\end{lemma}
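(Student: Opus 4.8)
The plan is to translate the two-sided edge condition~\cref{eqiecca} into a single operator identity acting on the field~\(u_j\) at the mid-element point~\(X_j\), and then to solve that identity for~\(\delta_x u_j\). The essential preliminary is to read the shift operators on two independent ``slots'': let \(E_x\) denote the shift of the subgrid variable by a full element width (so \(E_x^{1/2}\) shifts~\(x\) by~\(H/2\)), and let \(E_j\) denote the shift of the element index by one, at fixed local coordinate. Because \(X_j=(L_j+R_j)/2\) and \(H_j=H\), expressing the four edge values as operators on~\(u_j\) evaluated at~\(X_j\) gives \(u_j^R=E_x^{1/2}u_j\), \(u_j^L=E_x^{-1/2}u_j\), \(u_{j+1}^L=E_jE_x^{-1/2}u_j\) and \(u_{j-1}^R=E_j^{-1}E_x^{1/2}u_j\); here \(E_x\) and~\(E_j\) act on different slots and hence commute.

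Substituting these into~\cref{eqiecca} and collecting the coefficients of \(E_x^{1/2}\) and \(E_x^{-1/2}\), the edge condition becomes \(A\,E_x^{1/2}u_j=B\,E_x^{-1/2}u_j\), where \(A:=\bigl(1-\tfrac12\gamma(1+\theta)\bigr)+\tfrac12\gamma(1+\theta)E_j^{-1}\) and \(B:=\bigl(1-\tfrac12\gamma(1-\theta)\bigr)+\tfrac12\gamma(1-\theta)E_j\) are polynomials in the single index-shift~\(E_j\). Multiplying on the right by~\(E_x^{1/2}\) yields the clean closed relation \(E_x=A^{-1}B\): the full-width spatial shift is slaved to the index-shift operators through the coupling. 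Since \(A\), \(B\) and the \(E_j\)-operators mutually commute, I then compute \(\delta_x u_j=(E_x^{1/2}-E_x^{-1/2})u_j=A^{-1/2}B^{1/2}u_j-A^{1/2}B^{-1/2}u_j=(AB)^{-1/2}(B-A)u_j\), so that \(\delta_x u_j=\dfrac{B-A}{\sqrt{AB}}\,u_j\).

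It then remains to expand \(B-A\) and \(AB\) in terms of \(\delta_j\) and~\(\mu_j\) using \(E_j^{\pm1}=1\pm\mu_j\delta_j+\tfrac12\delta_j^2\) (\cref{tblopids}). A short computation gives \(B-A=\gamma(\mu_j-\tfrac12\theta\delta_j)\delta_j\) and \(AB=1+\tfrac12\gamma\delta_j^2-\gamma\theta\mu_j\delta_j+\tfrac14\gamma^2(\theta^2-1)\delta_j^2\), which are precisely the numerator and the radicand of~\cref{eqieccat}; this establishes the identity at~\(x=X_j\). For the final assertion, note that \cref{lemwaveds} shows every derivative \(\partial_x^n u_j\) satisfies the same edge condition~\cref{eqiecca}, and \(\partial_x^n\) commutes with \(E_x\), \(E_j\), \(\delta_j\) and~\(\mu_j\); hence the whole derivation applies verbatim with \(u_j\) replaced by \(\partial_x^n u_j\), giving the identity for every spatial derivative evaluated at~\(X_j\).

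The main obstacle is conceptual rather than computational: the careful two-slot bookkeeping that lets the genuinely two-element condition collapse to the single commuting-operator relation \(E_x=A^{-1}B\), together with the legitimacy of the operator square root. The latter is justified because \(AB\to1\) as \(\gamma\to0\), so \(\sqrt{AB}\) is an unambiguous power series in~\(\gamma\) (equivalently in~\(\delta_j\)), the rearrangement \(A^{-1/2}B^{1/2}-A^{1/2}B^{-1/2}=(AB)^{-1/2}(B-A)\) is valid within this commutative algebra, and the square-root branch is fixed by matching the leading behaviour \(\delta_x u_j=\gamma\mu_j\delta_j u_j+\Ord{\gamma^2}\) at small~\(\gamma\).
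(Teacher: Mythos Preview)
Your argument is correct and is algebraically equivalent to the paper's, but your packaging is a little cleaner. The paper also substitutes the shift-operator expressions for \(u_j^{R,L}\), \(u_{j\pm1}^{L,R}\), but instead of your factorisation \(A\,E_x^{1/2}=B\,E_x^{-1/2}\) it groups into \(\mu_\xi,\delta_\xi\) to obtain \((1+\tfrac14\gamma\delta_j^2-\tfrac12\gamma\theta\mu_j\delta_j)\,\delta_\xi u_j=\gamma(\mu_j-\tfrac12\theta\delta_j)\delta_j\,\mu_\xi u_j\), then squares both sides, uses \(\mu_\xi^2=1+\tfrac14\delta_\xi^2\) to eliminate~\(\mu_\xi\), and finally takes a square root. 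With \(C:=\tfrac12(A+B)\) and \(D:=B-A\) this is exactly your identity, since \(C^2-\tfrac14D^2=AB\); your route simply avoids the detour through \(\delta_\xi^2\) by working directly with \(E_x^{1/2}=(A^{-1}B)^{1/2}\).

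One point to tighten: the step ``multiply on the right by~\(E_x^{1/2}\) to get \(E_x=A^{-1}B\)'' is not a pointwise consequence of the single edge relation at~\(X_j\); it needs the fact that the same relation holds for every \(\partial_x^n u_j\) (\cref{lemwaveds}), so that Taylor-summing lifts the pointwise equation to an operator identity on (a smooth extension of)~\(u_j\). You invoke \cref{lemwaveds} only at the end, but it is already doing work here. The paper faces the identical issue at its squaring step and handles it with an explicit remark that \(u_j\) must be smoothly extended to \(\xi\in[-\tfrac12,\tfrac32]\); your power-series justification of \(\sqrt{AB}\) is the right idea, just make clear it also underwrites the passage to \(E_x=A^{-1}B\).
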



\begin{proof}
We first rewrite the edge conditions~\cref{eqiecca} in terms of mean and difference operators \(\mu\)~and~\(\delta\).
Identities herein involving~\(u_j\) are evaluated ``at \(x=X_j\)''---equivalently ``at \(\xi=1/2\)'' in terms of the subgrid space variable \(\xi:=(x-L_j)/H\) (as introduced in the subgrid fields~\eqref{eqsdiffpde})  because in this proof we regard~\(u_j\) as a function of~\(\xi\), not directly~\(x\).
Using the spatial shift operators~\(E_j\) and~\(E_\xi\) (\cref{tblopids}), \(u_j^R=E_\xi^{1/2}u_j\)\,, \(u_j^L=E_\xi^{-1/2}u_j\)\,, \(u_{j-1}^R=E_j^{-1}E_\xi^{1/2}u_j\)\,, \(u_{j+1}^L=E_jE_\xi^{-1/2}u_j\)\,, which we substitute into~\eqref{eqiecca}, and then replace the shift operators with mean and difference operators to obtain after some rearrangement that~\eqref{eqiecca} is the same as
\begin{equation}
(1+\tfrac14\gamma\delta_j^2-\tfrac12\gamma\theta\mu_j\delta_j)\delta_\xi u_j=\gamma(\mu_j-\tfrac12\theta\delta_j)\delta_j\mu_\xi u_j\,.\label{eq:ccshift}
\end{equation}
On squaring the operators\footnote{
Equation~\cref{eq:ccshift} is an operation on~\(u_j\) evaluated at \(\xi=\frac12\) to define the coupling conditions at element edges \(\xi=0,1\)\,.
When squaring the operators we must smoothly extend \(u_j\) over \(\xi\in[-\frac12,0),(1,\frac32]\)  (as defined by the \pde{}) with the new `squared operator' equation defining new coupling conditions at \(\xi=-\frac12,\frac32\) (or equivalently, at \(x=X_{j-1},X_{j+1}\) but still in the smooth extension of element~\(j\)).
For example, \(\delta^2_{\xi} u_j(\xi=\frac12)=u_j(\frac32)-u_j(-\frac12)\) and we remain in a smooth extension \text{of element~\(j\).}
}
\begin{align*}
(1+\tfrac14\gamma\delta_j^2-\tfrac12\gamma\theta\mu_j\delta_j)^2\delta_\xi ^2 u_j=\gamma^2(\mu_j-\tfrac12\theta\delta_j)^2\delta_j^2\mu_{\xi}^2 u_j\,.
\end{align*}
Expand both sides, and on the left substitute \(\mu_j^2=1+\frac14\delta_j^2\) while on the right substitute \(\mu_\xi^2=1+\frac14\delta_\xi^2\) (\cref{tblopids}), to obtain
\begin{equation*}
\big[1+\tfrac12\gamma\delta_j^2-\gamma\theta\mu_j\delta_j+\tfrac14\gamma^2(\theta^2-1)\delta_j^2\big]\delta_\xi^2 u_j
={\gamma^2(\mu_j^2-\tfrac14\theta^2\delta_j^2)^2\delta_j^2}u_j\,.
\end{equation*}
Since \(\delta_\xi=\delta_x\) on the microscale, dividing the above by the~\([\cdot]\) factor, taking the square root, and recalling that this operator applies to~\(u_j\) at \(\xi=\tfrac12\) (that is, \(x=X_j\)), \text{we obtain~\eqref{eqieccat}.}
\end{proof}

\begin{lemma}\label{lemwavemed}
The first-order wave \pde~\cref{equniwpde} on elements with coupling~\cref{eqieccat} has macroscale  dynamics such that
\begin{equation}
\partial_tU_j=-\frac{2c}H\asinh\left[ \frac{\tfrac12\gamma(\mu_j-\tfrac12\theta\delta_j)\delta_j}{\sqrt{1+\tfrac12\gamma\delta_j^2-\gamma\theta\mu_j\delta_j+\tfrac14\gamma^2(\theta^2-1)\delta_j^2}} \right] U_j\,,
\label{eqeqpdewave1}
\end{equation}
where macroscale grid field~\(U_j\) is a local average of~\(u_j(x)\) about the centre of the \(j\)th element; that is, \(U_j:=\tfrac{1}{2\ell}\int_{X_j-\ell}^{X_j+\ell}w(x-X_j)u_j(x)\,dx\) for some \(\ell\leq H/2\) and some weight function~\(w\) (the macroscale field reduces to the mid-element value \(U_j:=u_j(X_j)\) as \(\ell\rightarrow 0\) and for weight \(w(0)=1\)).
\end{lemma}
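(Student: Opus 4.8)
\quad The plan is to combine the operator identity \(\delta_x=2\sinh(\tfrac12H\partial_x)\) of \cref{tblopids} with the coupling identity \cref{eqieccat} and the wave \pde\ \cref{equniwpde}, and then to average over the subgrid. Write \(\Phi\) for the index operator on the right-hand side of \cref{eqieccat}, so that \cref{lemcc1eoca} reads \(\delta_x u_j=\Phi u_j\) at \(x=X_j\). Here \(\Phi\), built from \(\mu_j\) and \(\delta_j\), is interpreted as a formal power series in \(\gamma\) and in the integer element-shift operators, the square root (and, later, the \(\asinh\)) being expanded as power series; since the numerator of \(\Phi\) carries a factor \(\gamma\), on these subgrid fields \(\delta_xu_j=\Phi u_j=\Ord{\gamma}\), consistent with the slow-manifold construction.

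First I would upgrade the pointwise coupling identity to the operator statement \(\delta_x^m u_j=\Phi^m u_j\) near \(x=X_j\) for every \(m\geq1\). The final sentence of \cref{lemcc1eoca} (resting on \cref{lemwaveds}) gives \(\delta_x\partial_x^n u_j=\Phi\,\partial_x^n u_j\) at \(x=X_j\) for all \(n\). Because \(\Phi\) acts only on the element index \(j\) while \(\partial_x\) and \(\delta_x=\delta_\xi\) act only on the subgrid coordinate \(\xi\), the two families of operators commute, so all \(x\)-Taylor coefficients of \(\delta_x u_j\) and \(\Phi u_j\) agree at \(x=X_j\). For smooth \(u_j\) this yields the functional identity \(\delta_x u_j=\Phi u_j\) as functions of \(\xi\) near \(\xi=\tfrac12\); iterating with \(\delta_x\Phi=\Phi\delta_x\) then gives \(\delta_x^m u_j=\Phi^m u_j\) for every \(m\).

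Next I would invert the table identity to write \(\partial_x=\tfrac2H\asinh(\tfrac12\delta_x)\) as a series in odd powers of \(\delta_x\), apply it to \(u_j\), and substitute \(\delta_x^m u_j=\Phi^m u_j\) term by term, converting the series in \(\delta_x\) into the identical series in \(\Phi\); thus \(\partial_x u_j=\tfrac2H\asinh(\tfrac12\Phi)u_j\) near \(x=X_j\), and the \pde\ \(u_{jt}=-cu_{jx}\) gives \(\partial_t u_j=-\tfrac{2c}{H}\asinh(\tfrac12\Phi)u_j\). Finally, apply the averaging \(\mathcal{A}[\cdot]:=\tfrac1{2\ell}\int_{X_j-\ell}^{X_j+\ell}w(x-X_j)\,[\cdot]\,dx\) defining \(U_j=\mathcal{A}[u_j]\), legitimate since \(\ell\leq H/2\) keeps the support inside element~\(j\). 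Time differentiation commutes with this spatial average, so \(\partial_t U_j=-\tfrac{2c}{H}\mathcal{A}\!\left[\asinh(\tfrac12\Phi)u_j\right]\). As \(\Phi\) is a power series in integer element-shifts with coefficients constant in \(x\), each \(\Phi^k u_j\) is a fixed combination \(\sum_n c_n u_{j+n}\), and \(\mathcal{A}\) (applied identically relative to each midpoint \(X_{j+n}\)) satisfies \(\mathcal{A}[\Phi^k u_j]=\sum_n c_n U_{j+n}=\Phi^k U_j\); summing the \(\asinh\) series gives \(\mathcal{A}[\asinh(\tfrac12\Phi)u_j]=\asinh(\tfrac12\Phi)U_j\), which is exactly \cref{eqeqpdewave1}.

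The main obstacle is the clean justification of the two interchanges: passing from the pointwise coupling identity to the functional operator identity \(\delta_x=\Phi\) acting on \(u_j\), and commuting the subgrid average \(\mathcal{A}\) past the index-operator function \(\asinh(\tfrac12\Phi)\). Both hinge on the fact that the element-index operators \(\mu_j,\delta_j\) commute with the subgrid operations \(\partial_x,\delta_\xi\) and with the \(x\)-average, together with treating the square root and \(\asinh\) as formal power series in \(\gamma\). Once that commutation is secured, the remaining algebra is routine.
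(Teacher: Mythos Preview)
Your proposal is correct and follows the same logical thread as the paper: use \cref{lemcc1eoca} (via \cref{lemwaveds}) to get the coupling identity for all spatial derivatives at~\(X_j\), Taylor-sum to extend to a neighbourhood, average, and revert via \(\asinh\). The one noteworthy difference is \emph{where} you invoke the \pde. The paper uses the \pde\ first, replacing \(\delta_x\) by the time operator \(2\sinh(-H\partial_t/2c)\); then the single identity \(2\sinh(-H\partial_t/2c)\,\partial_x^p u_j\big|_{X_j}=\Phi\,\partial_x^p u_j\big|_{X_j}\) involves on the left an operator that commutes trivially with the Taylor monomials \((x-X_j)^p\) and with the subgrid average~\(\mathcal A\), so the extension to a neighbourhood and the averaging go through without any iteration, and a single \(\asinh\) reversion finishes. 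Your route stays purely spatial: you first extend \(\delta_x u_j=\Phi u_j\) to a neighbourhood, then iterate to \(\delta_x^m u_j=\Phi^m u_j\), feed that into the \(\asinh\) series for \(\partial_x\), and only then use \(\partial_t=-c\partial_x\). Both reach the same conclusion; the paper's ordering is a touch more economical because the temporal operator sidesteps the need to justify iterating the spatial shift identity, whereas your route must argue (as you do) that the functional identity \(\delta_x u_j=\Phi u_j\) holds on a wide enough interval to support repeated application of~\(\delta_x\).
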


Knowing how spatial differences of \pde\ solutions transform now translates into the corresponding evolution of the macroscale variables.

\begin{proof} 
Consider the wave \pde~\cref{equniwpde} in the \(j\)th~element and higher order spatial derivatives (and using \cref{tblopids}):  for every \(p=0,1,2,\ldots\), \(\partial_t\partial_x^p u_j=-c\partial_x \partial_x^p u_j=-\tfrac cH2\asinh(\delta_x/2) \partial_x^pu_j\)\,.
Inverting the operator~\(\asinh(\delta_x/2)\), this identity is equivalent to \(2\sinh(-H\partial_t/2c) \partial_x^p u_j=\delta_x \partial_x^p u_j\)\,.
Evaluating this at the element mid-point and by the coupling~\cref{eqieccat}, 
\begin{align*}
2\sinh(-H\partial_t/2c) \partial_x^p u_j\big|_{X_j}
&=\delta_x \partial_x^p u_j\big|_{X_j}\\
&=\frac{\gamma(\mu_j-\tfrac12\theta\delta_j)\delta_j}{\sqrt{1+\tfrac12\gamma\delta_j^2-\gamma\theta\mu_j\delta_j+\tfrac14\gamma^2(\theta^2-1)\delta_j^2}} \partial_x^p u_j\big|_{X_j}\,.
\end{align*}
Premultiplying by \((x-X_j)^p/p!\) provides the \(p\)th term in a Taylor expansion of~\(u_j(x)\) about~\(x=X_j\) (the term~\((x-X_j)^p=H^p(\xi-\frac12)^p\) commutes with~\(\partial_t\), \(\delta_j\)~and~\(\mu_j\) since it is independent of \(t\)~and~\(j\), but it does not commute with~\(\delta_x=\delta_{\xi}\) when \(p>0\)) 
and on summing over all~\(p=0,1,2,\ldots\) we obtain
\begin{equation}
2\sinh(-H\partial_t/2c) u_j(x)=\frac{\gamma(\mu_j-\tfrac12\theta\delta_j)\delta_j}{\sqrt{1+\tfrac12\gamma\delta_j^2-\gamma\theta\mu_j\delta_j+\tfrac14\gamma^2(\theta^2-1)\delta_j^2}} u_j(x)\label{eqieccatAllx}
\end{equation}
in some open interval containing~\(X_j\).
Then multiply by the weight function~\(w(x-X_j)\) and integrate over the centre of the \(j\)th~element from \(x=X_j-\ell\) to~\(X_j+\ell\) and finally
revert the \(\sinh(-H\partial_t/2c)\) operator to deduce~\cref{eqeqpdewave1}; or, for the averaging width \(\ell\rightarrow 0\)\,, set \(x=X_j\) and revert the \(\sinh\) operator to deduce~\cref{eqeqpdewave1}.
\end{proof}

The above proof also shows that~\cref{eqieccat} holds for \(u_j\) evaluated at all \(x\in(X_j-\ell,X_j+\ell)\)\,, not just \(x=X_j\)\,. 
To prove this, in~\cref{eqieccatAllx} substitute \(2\sinh(-H\partial_t/2c)=2\sinh(H\partial_x/2)=\delta_x\)\,.

Computer algebra readily learns the dynamics~\eqref{eqeqpdewave1} in the form of a Taylor series of  the coupling parameter~\(\gamma\). 
That is, the computer algebra recovers precisely the series~\cref{eqgopwave1} to a chosen order of~\(\gamma\).
This recovery of the series verifies \cref{lemwaveds,lemcc1eoca,lemwavemed}.

\begin{theorem}\label{thmhdcuwpde}
At full coupling \(\gamma=1\)\,, and for every~\(\theta\), the holistic discretisation~\eqref{eqeqpdewave1}, equivalently~\eqref{eqgopwave1}, is consistent with the first-order wave \pde~\eqref{equniwpde}.
\end{theorem}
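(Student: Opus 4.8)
The plan is to work directly with the exact closed-form macroscale evolution~\eqref{eqeqpdewave1} supplied by \cref{lemwavemed}, rather than with the truncated series~\eqref{eqgopwave1}, and to show that setting \(\gamma=1\) collapses the entire \(\theta\)-dependence to leave a clean advection operator. First I would substitute \(\gamma=1\) into the radicand appearing in the denominator of~\eqref{eqeqpdewave1} and collect the \(\delta_j^2\) terms, obtaining
\[
1+\tfrac12\delta_j^2-\theta\mu_j\delta_j+\tfrac14(\theta^2-1)\delta_j^2
=1+\tfrac14(1+\theta^2)\delta_j^2-\theta\mu_j\delta_j.
\]
The crucial observation---the step I expect to be the linchpin---is that this operator is a perfect square. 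Using the identity \(\mu_j^2=1+\tfrac14\delta_j^2\) from \cref{tblopids}, the right-hand side equals \(\mu_j^2-\theta\mu_j\delta_j+\tfrac14\theta^2\delta_j^2=(\mu_j-\tfrac12\theta\delta_j)^2\). Choosing the branch of the square root that reduces to \(\mu_j=1>0\) at \(\delta_j=0\), the denominator of~\eqref{eqeqpdewave1} at full coupling is therefore exactly the operator \(\mu_j-\tfrac12\theta\delta_j\).

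With this factorisation in hand, the numerator factor \((\mu_j-\tfrac12\theta\delta_j)\delta_j\) and the denominator share the common factor \(\mu_j-\tfrac12\theta\delta_j\), which cancels for every~\(\theta\). Hence at \(\gamma=1\) the argument of \(\asinh\) in~\eqref{eqeqpdewave1} simplifies to \(\tfrac12\delta_j\), independently of the tuning parameter, and the evolution reduces to
\[
\partial_tU_j=-\frac{2c}{H}\asinh\bigl(\tfrac12\delta_j\bigr)U_j.
\]
To finish I would invoke the identity \(\delta_j=2\sinh(\partial_j/2)\) (\cref{tblopids}), so that \(\asinh(\tfrac12\delta_j)=\asinh\bigl(\sinh(\partial_j/2)\bigr)=\tfrac12\partial_j\) as a formal operator, giving \(\partial_tU_j=-\tfrac{c}{H}\partial_jU_j\). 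Finally, interpreting the discrete index derivative at the macroscale via \cref{remcom}---one step in the index~\(j\) equals one step of size~\(H\) in~\(x\), so that \(\partial_j=H\partial_x\)---yields \(\partial_tU=-c\,\partial_xU\), which is precisely the wave \pde~\eqref{equniwpde}.

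The only subtlety to handle carefully is the branch choice for the square root together with the status of these manipulations as identities of formal power series in the difference operators; the cancellation itself is exact, so the reduction holds to all orders in~\(\delta_j\) and not merely as a leading-order consistency statement. This exact cancellation is also what explains the \(\theta\)-independence already visible in~\eqref{eqgopwave1} evaluated at \(\gamma=1\). It is the recognition of the perfect-square structure of the radicand that constitutes the heart of the argument, with everything thereafter reducing to the operator identities of \cref{tblopids}.
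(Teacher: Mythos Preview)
Your proposal is correct and follows essentially the same route as the paper's own proof: substitute \(\gamma=1\), use \(\mu_j^2=1+\tfrac14\delta_j^2\) to recognise the radicand as the perfect square \((\mu_j-\tfrac12\theta\delta_j)^2\), cancel against the numerator to leave \(\asinh(\tfrac12\delta_j)\), and then invoke the operator identities of \cref{tblopids} together with \cref{remcom} to recover \(\partial_tU=-c\,\partial_xU\). The only cosmetic difference is that the paper passes through \(\delta_j\to\delta_x\) before inverting the \(\asinh\), whereas you invert first via \(\delta_j=2\sinh(\partial_j/2)\) and then identify \(\partial_j=H\partial_x\); your explicit remark on the branch choice is a nice addition the paper leaves implicit.
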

\begin{proof} 
Evaluating~\cref{eqeqpdewave1} at full-coupling \(\gamma=1\) gives (\cref{tblopids})
\begin{align*}
\partial_tU_j&=-\frac{2c}H\asinh\left[ \frac{\tfrac12(\mu_j-\tfrac12\theta\delta_j)\delta_j}{\sqrt{1+\tfrac12\delta_j^2-\theta\mu_j\delta_j+\tfrac14(\theta^2-1)\delta_j^2}} \right] U_j
\\&=-\frac{2c}H\asinh\left[\frac{\tfrac12(\mu_j-\tfrac12\theta\delta_j)\delta_j}{\sqrt{\mu_j^2-\theta\mu_j\delta_j+\tfrac14\theta^2\delta_j^2}} \right] U_j
\\&=-\frac{2c}H\asinh\left[ \frac{\tfrac12(\mu_j-\tfrac12\theta\delta_j)\delta_j}{\sqrt{(\mu_j-\tfrac12\theta\delta_j)^2}}
 \right] U_j
 \\&=-\frac{2c}H\asinh\left[  \delta_j/2\right] U_j=-\frac{2c}H\asinh\left[  \delta_x/2\right] U_j\quad\text{(\cref{remcom})}
\\&=-\frac cH\partial_j U_j=-c\partial_x U_j\,.
\end{align*}
That \(\partial_tU_j=-c\partial_xU_j\) confirms that computing on elements with full coupling recovers macroscale simulations and predictions consistent with the macroscale dynamics of the first-order wave \pde~\cref{equniwpde}.
\end{proof}

Substituting \(\gamma=1\) reduces the series~\cref{eqgopwave1}, obtained from computer algebra, to the  first-order wave \pde~\cref{equniwpde} and verifies \cref{thmhdcuwpde}.
However, a practical derivation of the series~\cref{eqgopwave1} necessarily truncates to some finite order in the coupling, say to errors~\ord{\gamma^p}. 
Two important questions concerning this error in~\(\gamma\) and the practical application of this holistic discretisation are: 
how is this error reinterpreted in terms of a stencil of nearest-element coupling? 
and how does it affect the consistency of the series~\cref{eqgopwave1} with the wave \pde~\cref{equniwpde}?

\begin{theorem}\label{thmfocwave}
Consider constructing, to errors~\ord{\gamma^p}, the slow manifold of the first-order wave \pde~\cref{equniwpde} on elements coupled by~\cref{eqiecca}. 
The resulting holistic discretisation (that is, \cref{eqgopwave1} truncated to errors~\ord{\gamma^p}) is \begin{itemize}
\item a scheme of stencil width \((2p-1)\) on the macroscale grid, and
\item consistent with the wave \pde\ to errors~\ord{\partial_x^p}.
\end{itemize}
\end{theorem}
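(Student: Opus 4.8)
The plan is to work directly from the closed-form macroscale operator~\cref{eqeqpdewave1} of \cref{lemwavemed}, namely \(\partial_tU_j=-\tfrac{2c}H\asinh[A]U_j\) where I abbreviate the argument as
\[
A:=\frac{\tfrac12\gamma(\mu_j-\tfrac12\theta\delta_j)\delta_j}{\sqrt{1+\tfrac12\gamma\delta_j^2-\gamma\theta\mu_j\delta_j+\tfrac14\gamma^2(\theta^2-1)\delta_j^2}}\,.
\]
Expanding \(A\), and then \(\asinh[A]\), as a power series in the coupling~\(\gamma\) gives \(\asinh[A]=\sum_{n\geq1}b_n\gamma^n\), with each~\(b_n\) a polynomial operator in the grid operators~\(\mu_j\) and~\(\delta_j\) (and in~\(\theta\)). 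The whole theorem then reduces to two structural bounds on~\(b_n\): an \emph{upper} bound on its operator degree, which controls the stencil width; and a \emph{lower} bound on its number of difference factors, which controls the consistency order.

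First I would track degrees through the formula for~\(A\). Writing the radicand as \(Q:=1+\tfrac12\gamma\delta_j^2-\gamma\theta\mu_j\delta_j+\tfrac14\gamma^2(\theta^2-1)\delta_j^2\), the key observation is that the \(\gamma^\ell\)-part of \(Q-1\) has total degree in \(\{\mu_j,\delta_j\}\) at most~\(2\ell\) and contains at least~\(\ell\) factors of~\(\delta_j\) (for \(\ell=1\) the minimal such term is \(-\theta\mu_j\delta_j\); for \(\ell=2\) it is \(\tfrac14(\theta^2-1)\delta_j^2\)). Both properties are additive under operator multiplication, so they propagate through the binomial expansion of \(Q^{-1/2}\) and through the explicit prefactor \(\tfrac12\gamma(\mu_j-\tfrac12\theta\delta_j)\delta_j\) (itself of degree~\(2\), at least one factor of~\(\delta_j\), and one power of~\(\gamma\)). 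Hence the \(\gamma^n\)-part of~\(A\) has degree at most~\(2n\) and at least~\(n\) factors of~\(\delta_j\). Because \(\asinh\) is an odd power series in~\(A\), summing \(\gamma\)-orders and degrees over the three-or-more factors of each \(A^{2l+1}\) preserves both estimates term by term, so each~\(b_n\) has \(\{\mu_j,\delta_j\}\)-degree at most~\(2n\) and at least~\(n\) factors of~\(\delta_j\).

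For the stencil width, a monomial~\(\mu_j^a\delta_j^b\) (with \(a+b\) even, as forced by evolving the integer-indexed grid field) reaches grid neighbours up to \((a+b)/2\) elements either side of~\(j\), so degree at most~\(2n\) yields a symmetric stencil of width at most~\(2n+1\). Truncating \(\asinh[A]\) to errors~\ord{\gamma^p} retains exactly the terms \(n=1,\dots,p-1\), giving a scheme of width at most~\(2p-1\); sharpness follows from a non-cancellation check that the extremal contribution \(\mu_j\delta_j^{2p-3}\)---arising from the prefactor times the \((\tfrac12\delta_j^2-\theta\mu_j\delta_j)^{p-2}\) piece of \(Q^{-1/2}\) through the linear term of \(\asinh\)---carries a coefficient nonzero for generic~\(\theta\), consistent with the explicit low orders in~\cref{eqgopwave1}. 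For consistency, I would pass to the equivalent \pde\ via \(\delta_j=\delta_x=2\sinh(H\partial_x/2)=H\partial_x+\Ord{H^3\partial_x^3}\) and \(\mu_j=\mu_x=\cosh(H\partial_x/2)=1+\Ord{H^2\partial_x^2}\) (\cref{tblopids,remcom}): each \(\delta_j\) contributes one power of~\(\partial_x\) at lowest order while~\(\mu_j\) contributes none, so the lower bound of~\(n\) difference factors forces \(b_n=\Ord{H^n\partial_x^n}\). By \cref{thmhdcuwpde} the full series at \(\gamma=1\) is exactly \(-c\partial_x\); hence the error from truncation is the tail \(-\tfrac{2c}H\sum_{n\geq p}b_n\) at \(\gamma=1\), whose lowest-order term is of order \(H^{p-1}\partial_x^p\), which is precisely consistency with the wave \pde~\cref{equniwpde} to errors~\ord{\partial_x^p}.

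The main obstacle I anticipate is making the degree- and factor-counting watertight as it passes through the two \emph{nonlinear, infinite} operations---the square-root denominator \(Q^{-1/2}\) and the \(\asinh\)---rather than the (routine) algebra at any single order. In particular, the consistency claim requires the additive lower bound to hold uniformly across every cross term of these expansions, and the sharp stencil width requires a genuine non-cancellation argument for the extremal coefficient; both are exactly the places where the interplay of \(\gamma\)-order with \(\delta_j\)-order must be controlled simultaneously, not one order at a time. (An order-by-order induction on~\(p\) is an alternative, but I expect the generating-function bookkeeping on the closed form to be cleanest.)
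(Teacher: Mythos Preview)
Your proposal is correct and follows essentially the same strategy as the paper's proof: both analyse the closed-form operator~\cref{eqeqpdewave1} by tracking how powers of~\(\gamma\) pair with powers of~\(\delta_j\) (and~\(\mu_j\)) through the square-root denominator and the \(\asinh\), yielding the same stencil-width and consistency bounds. Your framing via separate upper and lower degree bounds is slightly more systematic than the paper's \(\sim\)-expansion, and your explicit appeal to \cref{thmhdcuwpde} for the consistency conclusion, together with the non-cancellation check for sharp stencil width, go modestly beyond the paper's argument, but the underlying counting is identical.
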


\begin{proof} 
Consider macroscale solutions for which the differences~\(\delta_j\) are `small' (i.e., \(U_j\) varies slowly with changes in~\(j\)).
For such solutions, the Taylor series in~\(\delta_j\) of the holistic discretisation~\cref{eqeqpdewave1} is appropriate.
But we only need the pattern of powers of~\(\gamma\), \(\mu_j\) and~\(\delta_j\), so in the derivation we \emph{omit almost all details of the coefficients, and use~\(\sim\) instead of~\(=\) to denote such omission}.

The \(\asinh\) term  in~\cref{eqeqpdewave1} is
\begin{equation}
\asinh\left[ \frac{\tfrac12(\mu_j-\tfrac12\theta\delta_j)(\gamma\delta_j)}{\sqrt{1+\tfrac12\delta_j(\gamma\delta_j)-\theta\mu_j(\gamma\delta_j)+\tfrac14(\theta^2-1)(\gamma\delta_j)^2}} \right],
\end{equation}
where we pair each instance of~\(\gamma\) with an instance of~\(\delta_j\).
As all \(\gamma\) are paired with a \(\delta_j\), but not all \(\delta_j\) (or \(\mu_j\)) are paired with a \(\gamma\), we conclude that in a Taylor expansion in small \(\delta_j\), up to a given power~\(\delta_j^p\), every~\(\gamma\) exponent is~\(\leq p\) (as in~\cref{eqgopwave1}).
Furthermore, truncating construction to errors~\ord{\gamma^p} ensures that the terms are all complete and correct for every power of~\(\delta_j\) less than~\(p\); that is, the error is~\ord{\delta_j^p}.
Since \(\partial_x\sim\delta_j\) when operating on macroscale grid fields,  in such a construction the consistency error is~\ord{\partial_x^p} as asserted \text{in the theorem.}

To determine the width of the stencil it is sufficient to expand
\begin{align*}
\asinh[\cdot]&\sim\asinh\left[\frac{(\mu_j+\delta_j)(\gamma\delta_j)}{\sqrt{1+(\mu_j+\delta_j)(\gamma\delta_j)+(\gamma\delta_j)^2}}\right]
\\&\sim \sum_{m=0}^{\infty}(\mu_j+\delta_j)^{2m+1}(\gamma\delta_j)^{2m+1}[1+(\mu_j+\delta_j)(\gamma\delta_j)+(\gamma\delta_j)^2]^{-m-1/2}
\\&\sim \sum_{m=0}^{\infty}(\mu_j+\delta_j)^{2m+1}(\gamma\delta_j)^{2m+1}\sum_{n=0}^{\infty}(\mu_j+\delta_j)^{0:n}(\gamma\delta_j)^{n:2n} 
\\&\sim \sum_{m,n=0}^{\infty}(\mu_j+\delta_j)^{2m+1:2m+n+1}(\gamma\delta_j)^{2m+n+1:2m+2n+1}\,.
\end{align*}
On expanding to errors~\ord{\gamma^p} we retain up to \(\gamma^{p-1}\).
Then, the lowest order~\(\gamma\) which obtains the highest order \(E_j^{\pm1/2}\sim \mu_j,\delta_j\) is when \(p-1=2m+n+1\)\,, so that in the above expansion we have up to \(E_j^{[(2m+n+1)+(2m+n+1)]/2}\sim E_j^{\pm (p-1)}\)\,.
So the stencil is of width~\((2p-1)\). 
\end{proof}

This completes the proof of the consistency of these fixed-bandwidth holistic discretisations, elements coupled by~\eqref{eqiecca}, to the first-order wave \pde~\cref{equniwpde}.

\subsection{Holistic discretisation of generalised diffusion is consistent}
\label{Xsecpchdd}

Recall that \cref{eghocdd} shows that in the case of the diffusion \pde, \(u_t=u_{xx}\)\,, the holistic discretisation~\cref{eqgopdiff} is consistent with the \pde\ to all computed orders of accuracy.
The computer algebra (\cref{sechochd}) deriving the results~\cref{eqsdiffpde} is easily modified to analyse more general \pde{}s (such as advection-diffusion, \(u_t=-cu_x+u_{xx}\)\,, or lattice diffusion, \(u_t=u|_{x+d}-2u+u|_{x-d}\)) and in all cases investigated we found corresponding high-order consistency.
Consequently, here we prove consistency for a generalised class of \pde{}s.
The proofs in this section are analogous to those in \cref{secpcmwave1} for wave \pde{}s.

The proof of consistency is cognate to an earlier proof \cite[\S6.1]{Roberts2011a} of high-order consistency emerging from a distinctly different inter-element coupling.
However, the new inter-element coupling~\eqref{eqsiecc},  implemented on element edges in order to preserve important symmetries, requires \text{new proof.}

The analysis here  explores, for some Hilbert space~\UU, the evolution in time~\(t\) of a \UU-valued field~\(u(t,x)\) over space~\(x\in\XX\)\,.
We restrict attention to smooth \UU-valued fields~\(u\) in the vector space \(\CC:=C^\omega(\XX)\)\,, and correspondingly \(\CC_j:=C^\omega(\XX_j)\) on the spatial elements.
The field~\(u\) is to satisfy the isotropic, homogenous, generalised diffusion-like \pde, the evolution equation,
\begin{equation}
u_t=\cK_0 u+\cK(\partial_x^2)u\,,
\label{Xeqdiffpde}
\end{equation}
for some linear operators \(\cK_k:\UU\to\UU\) and isotropic invertible \(\cK:\CC\to \CC\)\,, with inverse~\(\cK^{-1}\), where \(\cK\)~is formally expandable as \(\cK=\sum_{k=1}^\infty \cK_k\partial_x^{2k}\) with \(\cK_1\)~invertible.

The most basic example of~\eqref{Xeqdiffpde} is simple diffusion, \(u_t=u_{xx}\)\,, obtained with \(\UU=\RR\)\,, zero~\(\cK_0\), and identity~\cK.
For a second example, with microscale space step~\(d\) the difference equation \(u_t=\delta^2 u:=u|_{x+d}-2u+u|_{x-d}\) corresponds to (\cref{tblopids}) setting \(\cK_0:=0\) and \(\cK(\partial_x^2) :=4\sinh^2(d\partial_x/2) =d^2\partial_x^2 +\tfrac1{12}d^4\partial_x^4 +\tfrac1{360}d^6\partial_x^6 +\cdots\) and so \(\cK_1=d^2\)\,.
Further in this second example, the invertibility gives that since \(\delta^2=\cK(\partial_x^2)\) then \(\partial_x^2 =\cK^{-1}(\delta^2) =(4/d^2)\asinh^2(\delta/2) =\tfrac1{d^2}\big[\delta^2 -\tfrac1{12}\delta^4 +\tfrac1{90}\delta^6-\cdots \big]\).

\cref{Xlemrpde,Xlemhods,Xlemcc1eocb,Xlemdiffmed} establish key identities for the subsequent consistency \cref{Xthmhdcudiff,Xthmfocdiff}.

\begin{lemma}\label{Xlemrpde}
Define the transformed time-derivative operator \(\cT:=\cK^{-1}(\partial_t-\cK_0)\)\,.
Then the evolution equation~\cref{Xeqdiffpde} is equivalent to
\begin{equation}
\cT u=u_{xx}\,.
\label{Xeqrpde}
\end{equation}
\end{lemma}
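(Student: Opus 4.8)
The plan is to exploit the definition of the transformed operator $\cT$ directly and to verify that rearranging the original evolution equation~\cref{Xeqdiffpde} yields precisely~\cref{Xeqrpde}. The key observation is that the operator $\cK$ is stated to be invertible (with inverse $\cK^{-1}$) as a map $\CC\to\CC$, and that $\cK$ acts on the field through its spatial argument $\partial_x^2$, so the manipulations are purely algebraic in the commuting operators $\partial_t$, $\partial_x^2$, $\cK_0$, and $\cK$. First I would take the evolution equation~\cref{Xeqdiffpde} in the form $u_t=\cK_0u+\cK(\partial_x^2)u$ and subtract $\cK_0u$ from both sides to isolate the diffusion-like part, obtaining $u_t-\cK_0u=\cK(\partial_x^2)u$. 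Then I would apply $\cK^{-1}$ to both sides; since $\cK^{-1}$ is a genuine (left and right) inverse on $\CC$, this gives $\cK^{-1}(u_t-\cK_0u)=\cK^{-1}\cK(\partial_x^2)u=\partial_x^2u=u_{xx}$.

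Recognising the left-hand side as exactly the definition $\cT:=\cK^{-1}(\partial_t-\cK_0)$ applied to $u$ completes the forward direction: $\cT u=u_{xx}$. For a complete proof of equivalence I would also record the reverse implication, which follows by applying $\cK$ to~\cref{Xeqrpde} and adding $\cK_0u$, noting that $\cK$ and $\cK^{-1}$ are mutually inverse so that $\cK\cT u=(\partial_t-\cK_0)u$; this returns~\cref{Xeqdiffpde}. Since every step is reversible, the two equations are equivalent on the class $\CC$ of smooth fields.

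The only point requiring care—and the step I expect to be the main (if modest) obstacle—is the well-definedness of $\cK^{-1}$ and the justification that it may legitimately be applied term-by-term to the field. The operator $\cK$ is given only as a formal power series $\cK=\sum_{k\geq1}\cK_k\partial_x^{2k}$ with $\cK_1$ invertible, so one must confirm that invertibility of the leading coefficient $\cK_1$ guarantees a formal inverse $\cK^{-1}$ acting on the analytic fields in $\CC:=C^\omega(\XX)$, and that $\cK$ and $\cK^{-1}$ commute with $\partial_t$ and with $\cK_0$ as required. I would address this by appealing to the hypothesis stated in the lemma's setup—that $\cK$ is invertible with inverse $\cK^{-1}$—so the formal manipulation is backed by an assumed genuine inverse; the commutation of $\partial_t$ with the $x$-acting operators $\cK,\cK_0$ is immediate since they act on independent variables. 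With these observations the identity $\cK^{-1}\cK=\cK\cK^{-1}=\mathrm{id}$ on $\CC$ closes the argument.
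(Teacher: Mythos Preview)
Your proposal is correct and follows essentially the same approach as the paper: rearrange the operator identity \(\partial_t=\cK_0+\cK(\partial_x^2)\) by subtracting~\(\cK_0\) and applying the assumed inverse~\(\cK^{-1}\) to obtain \(\cK^{-1}(\partial_t-\cK_0)=\partial_x^2\). The paper's proof is a one-line version of exactly this manipulation (invoking \cref{remcom} for the operator-algebra viewpoint), so your additional care about the reverse implication and the formal inverse is sound but more than the paper itself records.
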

\begin{proof}
As in \cref{remcom}, and since~\cK\ is invertible, we rearrange the operator equation \(\partial_t=\cK_0+\cK(\partial_x^2)\) to \(\cK^{-1}(\partial_t-\cK_0)=\partial_x^2\)\,. 
Hence, \eqref{Xeqrpde}~is equivalent to the evolution equation~\eqref{Xeqdiffpde}.
\end{proof}

\begin{lemma}
\label{Xlemhods}
Let the subgrid fields~\(u_j\in\CC_j\) satisfy the evolution equation~\cref{Xeqdiffpde}, equivalently~\eqref{Xeqrpde}, and be coupled by edge conditions~\cref{eqsiecc} with flux \(f_j:=- u_{jx}\)\,.
Then the edge condition~\cref{eqiecca} holds for all even spatial derivatives of~\(u_j\), and the edge condition~\cref{eqieccb} holds for all odd spatial derivatives of~\(u_j\).
\end{lemma}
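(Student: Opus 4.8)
The plan is to mirror the inductive argument of \cref{lemwaveds}, with the single spatial derivative~\(\partial_x\) (traded for \(-\tfrac1c\partial_t\) in the wave case) replaced by the second spatial derivative~\(\partial_x^2\) (traded for the operator~\(\cT\) via \cref{Xlemrpde}). The decisive structural fact, which I would establish first, is that \(\cT=\cK^{-1}(\partial_t-\cK_0)\) contains no spatial derivatives: since \(\cK^{-1}\) is the functional inverse of the power series \(\cK=\sum_{k\geq1}\cK_k\partial_x^{2k}\) (well defined because \(\cK_1\) is invertible), the operator~\(\cT\) is a power series in the purely temporal/fibre operator \(\partial_t-\cK_0\)—a combination that is well defined precisely because this section assumes homogeneous, autonomous \pde{}s, so the constant coefficients \(\cK_0,\cK_k\) mean \(\partial_t\) and \(\cK_0\) commute. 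Consequently \(\cT\) commutes with every spatial operation appearing in the edge conditions, both \(\partial_x\) and evaluation of a field at any element edge \(x=L_{j'},R_{j'}\). Iterating \cref{Xeqrpde} and this commutation then gives \(\partial_x^{2n}u_{j'}=\cT^{\,n}u_{j'}\) within each element (and its smooth extension, as in \cref{lemcc1eoca}).

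With that in hand, I would treat the two conclusions in parallel. Regard the left-hand-minus-right-hand side of \cref{eqiecca} as a fixed linear functional~\(\mathcal E_u\) of a field, built solely from the four edge values \((\cdot)_j^R,(\cdot)_j^L,(\cdot)_{j+1}^L,(\cdot)_{j-1}^R\); let \(\mathcal E_f\) denote the corresponding functional from \cref{eqieccb}. The hypotheses supply the base cases \(\mathcal E_u[u_j]=0\) and \(\mathcal E_f[f_j]=0\). For the even derivatives, evaluate \(\partial_x^{2n}u_{j'}=\cT^{\,n}u_{j'}\) at each edge of each element \(j'\in\{j-1,j,j+1\}\); because \(\cT^{\,n}\) is a temporal/fibre operator it factors out of the linear combination of edge values, giving
\[
\mathcal E_u\bigl[\partial_x^{2n}u_j\bigr]=\cT^{\,n}\,\mathcal E_u[u_j]=\cT^{\,n}\{0\}=0,
\]
which is exactly \cref{eqiecca} for the even derivative~\(\partial_x^{2n}u_j\).

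For the odd derivatives I would run the same computation on the flux. Since \(f_j=-u_{jx}\) and \(\cT\) commutes with~\(\partial_x\), the identity \(\partial_x^2 f_j=-\partial_x^3 u_j=-\partial_x(\cT u_j)=\cT f_j\) holds, and hence \(\partial_x^{2n}f_{j'}=\cT^{\,n}f_{j'}\) at every edge. Factoring \(\cT^{\,n}\) out of~\(\mathcal E_f\) then yields
\[
\mathcal E_f\bigl[\partial_x^{2n}f_j\bigr]=\cT^{\,n}\,\mathcal E_f[f_j]=0;
\]
as \cref{eqieccb} is homogeneous and \(\partial_x^{2n+1}u_j=-\partial_x^{2n}f_j\), this is precisely the assertion that the flux condition holds for every odd derivative of~\(u_j\). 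I expect the only real obstacle to be the preliminary commutation claim: one must argue carefully that \(\cK^{-1}(\partial_t-\cK_0)\) is genuinely free of~\(\partial_x\), so that a single temporal/fibre operator can be pulled out of a combination of values taken at \emph{distinct} spatial edges. Once that is secured, everything reduces to linearity and the two given base conditions.
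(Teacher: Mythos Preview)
Your proposal is correct and follows essentially the same route as the paper: both proofs trade $\partial_x^2$ for the purely temporal operator~$\cT$ via \cref{Xeqrpde}, then pull $\cT$ (or $\cT^{\,n}$) outside the linear combination of edge values and apply the base conditions. The paper writes the induction step from $n-2$ to $n$ explicitly and unifies the even/odd cases by noting that \cref{eqieccb} for $u_{jx}$ is \cref{eqiecca} with $\theta\mapsto-\theta$, whereas you jump directly to $\partial_x^{2n}u_j=\cT^{\,n}u_j$ and handle the odd case through the flux~$f_j$; these are cosmetic differences around the same idea.
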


\begin{proof}
Denote the \(n\)th derivative \(u_j^{(n)}:=\partial_x^nu_j\)\,.
Proceed via induction. 
From the definition of the flux, multiplying the flux edge condition~\cref{eqieccb} by~\(-1\) is the edge condition for the first derivative of~\(u_j\), and this coupling is equivalent to~\cref{eqiecca} with \(\theta\mapsto -\theta\)\,.
Then for every \(n\geq2\) consider
\begin{align*}
&(1-\tfrac12\gamma)(u_j^{(n)R}-u_j^{(n)L})-\tfrac12\gamma(u_{j+1}^{(n)L}-u_{j-1}^{(n)R})
\\&\quad{}
-\tfrac12\gamma(-1)^n\theta(u_j^{(n)L}+u_j^{(n)R})
+\tfrac12\gamma(-1)^n\theta(u_{j+1}^{(n)L}+u_{j-1}^{(n)R})
\\&\quad(\text{due to the identity }u_j^{(n)}=\partial_x^2u_j^{(n-2)})
\\&=(1-\tfrac12\gamma)(\partial_x^2u_j^{(n-2)R}-\partial_x^2u_j^{(n-2)L})
-\tfrac12\gamma(\partial_x^2u_{j+1}^{(n-2)L}-\partial_x^2u_{j-1}^{(n-2)R})
\\&\quad{}
-\tfrac12\gamma(-1)^n\theta(\partial_x^2u_j^{(n-2)L}+\partial_x^2u_j^{(n-2)R})
+\tfrac12\gamma(-1)^n\theta(\partial_x^2u_{j+1}^{(n-2)L}+\partial_x^2u_{j-1}^{(n-2)R})
\\&\quad(\text{then using \eqref{Xeqrpde}})
\\&=(1-\tfrac12\gamma)(\cT u_j^{(n-2)R}-\cT u_j^{(n-2)L})
-\tfrac12\gamma(\cT u_{j+1}^{(n-2)L}-\cT u_{j-1}^{(n-2)R})
\\&\quad{}
-\tfrac12\gamma(-1)^n\theta(\cT u_j^{(n-2)L}+\cT u_j^{(n-2)R})
+\tfrac12\gamma(-1)^n\theta(\cT u_{j+1}^{(n-2)L}+\cT u_{j-1}^{(n-2)R})
\\&=\cT \left\{(1-\tfrac12\gamma)(u_j^{(n-2)R}-u_j^{(n-2)L})
-\tfrac12\gamma(u_{j+1}^{(n-2)L}-u_{j-1}^{(n-2)R})
\right.\\&\left.\quad{}
-\tfrac12\gamma(-1)^n\theta(u_j^{(n-2)L}+u_j^{(n-2)R})
+\tfrac12\gamma(-1)^n\theta(u_{j+1}^{(n-2)L}+u_{j-1}^{(n-2)R})\right\}
\\&=\cT \left\{0\right\}=0\,,
\end{align*}
where the penultimate equality above holds provided \(u^{(n-2)}_j\) satisfies~\cref{eqiecca} when \(n\)~is even and~\cref{eqieccb} when \(n\)~is odd.
By induction, since \(u^{(0)}\) satisfies~\cref{eqiecca} then so does~\(u^{(n)}\) for every even~\(n\geq0\)\,, and 
since \(u^{(1)}\) satisfies~\cref{eqieccb} then so does~\(u^{(n)}\) for every odd~\(n\geq0\)\,.
\end{proof}

In the case of simple diffusion, the code of \cref{sechochd} verifies using computer algebra that for the constructed slow manifold discretisation (\cref{seclin}), the field~\(u_j(t,x)\) satisfies the following identity~\cref{Xeqieccatdif}.  
We now prove this identity more generally.
The proof is similar to that of \cref{lemcc1eoca}, but a little more complex as we now have edge conditions for both field and flux.

\begin{lemma}\label{Xlemcc1eocb}
Recall the mid-element locations \(X_j:=(R_j+L_j)/2\)\,.
For~\(u_j(t,x)\) as specified in \cref{Xlemhods}, and \(u_j\)~smoothly extended to being over~\([X_{j-1},X_{j+1}]\),
\begin{equation}
\delta_x^2 u_j=\frac{\gamma^2(\mu_j^2-\tfrac14\theta^2\delta_j^2)\delta_j^2}{1+\tfrac12\gamma\delta_j^2-\tfrac14\gamma^2(\theta^2+1)\delta_j^2}u_j
\quad\text{at }x=X_j\,.
\label{Xeqieccatdif}
\end{equation}
Further, this identity also holds for every spatial derivative~\(\partial_x^nu_j\).
\end{lemma}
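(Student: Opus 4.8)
The plan is to mirror the proof of \cref{lemcc1eoca} for the first-order wave \pde, but now to carry \emph{both} edge conditions along simultaneously and to exploit the parity bookkeeping supplied by \cref{Xlemhods}. First I would rewrite each edge condition in the mean/difference operators. Writing the edge values through the shift operators \(E_j,E_\xi\) exactly as in \cref{lemcc1eoca} (so \(u_j^R=E_\xi^{1/2}u_j\), \(u_{j+1}^L=E_jE_\xi^{-1/2}u_j\), etc., all evaluated at \(x=X_j\)), the field condition \cref{eqiecca} reduces to the operator relation
\begin{equation*}
P_-\,\delta_\xi u_j=\gamma Q_-\,\mu_\xi u_j,\qquad
P_-:=1+\tfrac14\gamma\delta_j^2-\tfrac12\gamma\theta\mu_j\delta_j,\quad
Q_-:=(\mu_j-\tfrac12\theta\delta_j)\delta_j,
\end{equation*}
which is precisely \cref{eq:ccshift}. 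By \cref{Xlemhods} the flux condition \cref{eqieccb} with \(f_j=-u_{jx}\) is the field condition applied to \(u_{jx}=\partial_x u_j\) with \(\theta\mapsto-\theta\), so the identical manipulation yields
\begin{equation*}
P_+\,\delta_\xi \partial_x u_j=\gamma Q_+\,\mu_\xi \partial_x u_j,\qquad
P_+:=1+\tfrac14\gamma\delta_j^2+\tfrac12\gamma\theta\mu_j\delta_j,\quad
Q_+:=(\mu_j+\tfrac12\theta\delta_j)\delta_j.
\end{equation*}

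The heart of the argument is to combine these two relations to produce the second difference \(\delta_x^2u_j=\delta_\xi^2u_j\). Using \(\delta_\xi=2\sinh(H\partial_x/2)\) and \(\mu_\xi=\cosh(H\partial_x/2)\) (\cref{tblopids}), the field relation reads \(\tanh(H\partial_x/2)u_j=\tfrac12\gamma P_-^{-1}Q_-u_j\), while the flux relation reads \(\tanh(H\partial_x/2)w=\tfrac12\gamma P_+^{-1}Q_+w\) for every odd-order derivative \(w=u_j^{(2m+1)}\). Because \(\tanh(H\partial_x/2)\) is an odd power series in \(\partial_x\), the quantity \(\tanh(H\partial_x/2)u_j\) is a combination of odd derivatives of \(u_j\); applying \(\tanh(H\partial_x/2)\) a second time therefore invokes the flux relation, not the field relation, giving \(\tanh^2(H\partial_x/2)u_j=\tfrac14\gamma^2 P_-^{-1}P_+^{-1}Q_-Q_+\,u_j\). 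It is exactly this parity alternation that yields the product \(Q_-Q_+\), rather than the \(Q_-^2\) that arose from the single coupling of the wave case. Converting via \(\delta_\xi^2=4\sinh^2=4\tanh^2/(1-\tanh^2)\) then produces
\begin{equation*}
\delta_x^2u_j=\frac{\gamma^2 Q_-Q_+}{\,P_-P_+-\tfrac14\gamma^2 Q_-Q_+\,}\,u_j .
\end{equation*}
A short calculation with \(\mu_j^2=1+\tfrac14\delta_j^2\) (\cref{tblopids}) evaluates the numerator \(Q_-Q_+=(\mu_j^2-\tfrac14\theta^2\delta_j^2)\delta_j^2\) and the denominator \(P_-P_+-\tfrac14\gamma^2 Q_-Q_+=1+\tfrac12\gamma\delta_j^2-\tfrac14\gamma^2(\theta^2+1)\delta_j^2\), which is the claimed \cref{Xeqieccatdif}.

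The main obstacle is justifying the second application of \(\tanh(H\partial_x/2)\). The relation \(\tanh(H\partial_x/2)u_j=\tfrac12\gamma P_-^{-1}Q_-u_j\) is an equality of values at \(x=X_j\), not of functions on a neighbourhood, so one may not simply apply \(\tanh(H\partial_x/2)\) to its right-hand side as though the field relation persisted there; doing so naively would re-use the field coupling and wrongly return \(Q_-^2\). The correct, rigorous move---the subtle point distinguishing diffusion from the wave \pde---is to expand \(\tanh(H\partial_x/2)u_j\) in the basis of odd-order derivatives of \(u_j\) and to apply the flux relation of \cref{Xlemhods} term by term, which is legitimate precisely because \(u_j\) is smoothly extended over \([X_{j-1},X_{j+1}]\) (the hypothesis of the statement, and the same device invoked in the squaring step of \cref{lemcc1eoca}). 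The extension to every \(\partial_x^n u_j\) is then immediate, since \(\partial_x^n\) commutes with all operators used and the couplings of \cref{Xlemhods} hold for derivatives of every order. Equivalently, one may avoid \(\tanh\) altogether by composing the two operator relations into \(P_-P_+\,\delta_\xi^2u_j=\gamma^2 Q_-Q_+\,\mu_\xi^2u_j\), substituting \(\mu_\xi^2=1+\tfrac14\delta_\xi^2\), and rearranging; in either route the delicate ingredient is the same bookkeeping of which derivative order obeys which coupling.
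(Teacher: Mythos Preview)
Your proposal is correct and, at its core, uses the same ingredients as the paper: the parity bookkeeping of \cref{Xlemhods} applied to the family of derivative relations, the smooth extension of~\(u_j\) to justify compositions of \(\xi\)-operators, and the reduction of the product \(P_-P_+\) and \(Q_-Q_+\) via \(\mu_j^2=1+\tfrac14\delta_j^2\). Your alternative route at the end---composing to \(P_-P_+\,\delta_\xi^2 u_j=\gamma^2 Q_-Q_+\,\mu_\xi^2 u_j\) and then substituting \(\mu_\xi^2=1+\tfrac14\delta_\xi^2\)---is exactly what the paper does.

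The one presentational difference is how each arrives at that composite relation. You work through \(\tanh(H\partial_x/2)\), using that \(\tanh\) is odd so a second application switches from the field coupling to the flux coupling, and then reconstruct \(\delta_\xi^2\) from \(\tanh^2\) via \(\delta_\xi^2=4\tanh^2/(1-\tanh^2)\). The paper instead sums the weighted derivative relations directly: summing the even-order family with weights \(1/[2^{2k}(2k)!]\) produces an extra factor \(\cosh(\tfrac12\partial_\xi)=\mu_\xi\), and summing the odd-order family with weights \(1/[2^{2k+1}(2k+1)!]\) produces \(\sinh(\tfrac12\partial_\xi)=\tfrac12\delta_\xi\), yielding the pair \(P_-\,\mu_\xi\delta_\xi u_j=\gamma Q_-\,\mu_\xi^2 u_j\) and \(P_+\,\delta_\xi^2 u_j=\gamma Q_+\,\mu_\xi\delta_\xi u_j\); eliminating \(\mu_\xi\delta_\xi u_j\) gives the composite relation in one step. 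The paper's summation avoids having to argue about iterated applications of \(\tanh\) and the associated geometric series in~\(T\), while your \(\tanh\) route makes the parity alternation very explicit. Either way the delicate point is exactly the one you flag, and your treatment of the extension to \(\partial_x^n u_j\) (replace \(\theta\mapsto(-1)^n\theta\) and note only \(\theta^2\) survives) matches the paper's.
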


\begin{proof} 
The field and flux edge conditions~\cref{eqsiecc}, and their derivatives (\cref{Xlemhods}), imply that,  in terms of mean and difference operators, the following holds at \(x=X_j\)\,, for every integer \(k=0,1,\ldots\)\,,
\begin{align}
[\delta_x+\gamma(\tfrac14\delta_j\delta_x-\mu_j\mu_x)\delta_j]\partial_x^{2k}u_j 
\quad&=\phantom{+}\tfrac12\gamma\theta(\mu_j\delta_x-\delta_j\mu_x) \delta_j\partial_x^{2k} u_j\,,
\nonumber\\
[\delta_x+\gamma(\tfrac14\delta_j\delta_x-\mu_j\mu_x)\delta_j]\partial_x^{2k+1}u_j 
&=-\tfrac12\gamma\theta(\mu_j\delta_x-\delta_j\mu_x) \delta_j\partial_x^{2k+1} u_j\,.
\label{Xeq:cck}
\end{align}
Upon multiplying by appropriate factors, and changing to subgrid variable \(\xi=(x-L_j)/H\)\,, the above set of edge conditions is equivalent to the following set: at \(\xi=\tfrac12\)\,, for every \(k=0,1,\ldots\)
\begin{subequations}\label{Xeqs:cckxi}%
\begin{align}
\frac{[\delta_\xi +\gamma(\tfrac14\delta_j\delta_\xi -\mu_j\mu_\xi )\delta_j]\ \partial_\xi ^{2k}u_j}{2^{2k}(2k)!} 
\quad&=\phantom{+}\frac{\tfrac12\gamma\theta(\mu_j\delta_\xi -\delta_j\mu_\xi ) \delta_j\ \partial_\xi ^{2k}u_j}{2^{2k}(2k)!} \,,
\label{Xeq:ccka}
\\
\frac{[\delta_\xi +\gamma(\tfrac14\delta_j\delta_\xi -\mu_j\mu_\xi )\delta_j]\ \partial_\xi ^{2k+1}u_j}{2^{2k+1}(2k+1)!}
&=-\frac{\tfrac12\gamma\theta(\mu_j\delta_\xi -\delta_j\mu_\xi ) \delta_j\ \partial_\xi ^{2k+1}u_j}{2^{2k+1}(2k+1)!} \,.
\label{Xeq:cckb}
\end{align}
\end{subequations}
Summing each of~\eqref{Xeqs:cckxi} over \(k=0,1,2,\ldots\)\,, and for simplicity here omitting ``\(u_j\) at \(\xi=\tfrac12\)'', gives
\begin{align*}
[\delta_\xi +\gamma(\tfrac14\delta_j\delta_\xi -\mu_j\mu_\xi )\delta_j]\cosh(\tfrac{1}{2}\partial_\xi )
&=\phantom{+}\tfrac12\gamma\theta(\mu_j\delta_\xi -\delta_j\mu_\xi ) \delta_j\cosh(\tfrac{1}{2}\partial_\xi ),\\
[\delta_\xi +\gamma(\tfrac14\delta_j\delta_\xi -\mu_j\mu_\xi )\delta_j]\sinh(\tfrac{1}{2}\partial_\xi )
&=-\tfrac12\gamma\theta(\mu_j\delta_\xi -\delta_j\mu_\xi ) \delta_j\sinh(\tfrac{1}{2}\partial_\xi ).
\end{align*}
Recall that \(\cosh(\tfrac{1}{2}\partial_\xi) =\mu_\xi\) and \(\sinh(\tfrac{1}{2}\partial_\xi) =\tfrac12\delta_\xi\)  (\cref{tblopids}).
Consequently the above pair of operator equations are
\begin{align*}
[\delta_\xi +\gamma(\tfrac14\delta_j\delta_\xi -\mu_j\mu_\xi )\delta_j]\mu_\xi 
&=\phantom{+}\tfrac12\gamma\theta(\mu_j\delta_\xi -\delta_j\mu_\xi ) \delta_j\mu_\xi \,,\nonumber\\
[\delta_\xi +\gamma(\tfrac14\delta_j\delta_\xi -\mu_j\mu_\xi )\delta_j]\delta_\xi 
&=-\tfrac12\gamma\theta(\mu_j\delta_\xi -\delta_j\mu_\xi ) \delta_j\delta_\xi \,.
\end{align*}
Each of these equations rearrange, respectively, to
\begin{align*}
(1+\tfrac14\gamma\delta_j^2-\tfrac12\gamma\theta\mu_j\delta_j)\mu_\xi \delta_\xi  
&=\gamma(\mu_j-\tfrac12\theta\delta_j)\delta_j\mu_\xi ^2  
\,,\\
(1+\tfrac14\gamma\delta_j^2+\tfrac12\gamma\theta\mu_j\delta_j)\delta_\xi ^2
\quad&= \gamma(\mu_j+\tfrac12\theta\delta_j)\delta_j\mu_\xi \delta_\xi \,.
\end{align*}
Multiply both sides of the first equation by \(\gamma(\mu_j+\tfrac12\theta\delta_j)\delta_j\)\,, then substitute the second to obtain\footnote{Here we smoothly extend the \(j\)th~element (\(\xi\in[0,1]\)) to also include \(\xi\in[-\frac12,0),(1,\frac32]\) and the following equation is an additional coupling condition for element~\(j\) at \(\xi=-\frac12, \frac32\)\,, or equivalently \(x=X_{j-1},X_{j+1}\) (as in the proof of \cref{lemcc1eoca}).}
\begin{align*}
&(1+\tfrac14\gamma\delta_j^2-\tfrac12\gamma\theta\mu_j\delta_j)(1+\tfrac14\gamma\delta_j^2+\tfrac12\gamma\theta\mu_j\delta_j)\delta_\xi ^2 \\
&=\gamma^2(\mu_j+\tfrac12\theta\delta_j)(\mu_j-\tfrac12\theta\delta_j)\delta_j^2 \,.
\end{align*}
Expand both sides, and on the left substitute \(\mu_j^2=1+\frac14\delta_j^2\) whereas on the right substitute \(\mu_\xi^2=1+\frac14\delta_\xi^2\) (\cref{tblopids}), to obtain
\begin{equation*}
\big[1+\tfrac12\gamma\delta_j^2-\tfrac14\gamma^2(\theta^2+1)\delta_j^2\big]\delta_\xi^2
={\gamma^2(\mu_j^2-\tfrac14\theta^2\delta_j^2)\delta_j^2}\,.
\end{equation*}
Since \(\delta_\xi=\delta_x\)\,, dividing the above by the~\([\cdot]\) factor, and now explicitly applying to~\(u_j\) at \(\xi=\tfrac12\) (that is, \(x=X_j\)) gives~\eqref{Xeqieccatdif}.

Furthermore, to prove that~\eqref{Xeqieccatdif} holds for all spatial derivatives~\(\partial_x^nu_j\), adapt~\eqref{Xeq:cck}
for \(n\)~even, so that \(n+2k\) is even:
\begin{align*}
[\delta_x+\gamma(\tfrac14\delta_j\delta_x-\mu_j\mu_x)\delta_j]\partial_x^{2k}\partial_x^nu_j 
\quad&=\phantom{+}\tfrac12\gamma\theta(\mu_j\delta_x-\delta_j\mu_x) \delta_j\partial_x^{2k}\partial_x^n u_j\,,\nonumber\\
[\delta_x+\gamma(\tfrac14\delta_j\delta_x-\mu_j\mu_x)\delta_j]\partial_x^{2k+1}\partial_x^nu_j 
&=-\tfrac12\gamma\theta(\mu_j\delta_x-\delta_j\mu_x) \delta_j\partial_x^{2k+1}\partial_x^nu_j\,,
\end{align*}
and for \(n\)~odd, so that \(n+2k\) is odd:
\begin{align*}
[\delta_x+\gamma(\tfrac14\delta_j\delta_x-\mu_j\mu_x)\delta_j]\partial_x^{2k}\partial_x^nu_j 
\quad&=-\tfrac12\gamma\theta(\mu_j\delta_x-\delta_j\mu_x) \delta_j\partial_x^{2k}\partial_x^n\,,\nonumber\\
[\delta_x+\gamma(\tfrac14\delta_j\delta_x-\mu_j\mu_x)\delta_j]\partial_x^{2k+1}\partial_x^nu_j 
&=\phantom{+}\tfrac12\gamma\theta(\mu_j\delta_x-\delta_j\mu_x) \delta_j\partial_x^{2k+1}\partial_x^nu_j\,.
\end{align*}
Then, in the proof of~\eqref{Xeqieccatdif} for~\(u_j\) we replace \(u_j\)~with~\(\partial_x^nu_j\) and \(\theta\)~with~\((-1)^n\theta\).
Since only~\(\theta^2\) terms arise in~\eqref{Xeqieccatdif}, it thus also holds for~\(\partial_x^nu_j\).
\end{proof}

\begin{lemma}\label{Xlemdiffmed}
The evolution equation~\eqref{Xeqdiffpde}, equivalently~\eqref{Xeqrpde}, on equi-sized elements for \(u_j\in\CC_j\) coupled by~\cref{eqsiecc}, has dynamics satisfying both the following two spatially discrete equations:
\begin{subequations}\label{Xeqeqpdediff}%
\begin{align}&
\cT U_j=\frac{4}{H^2}\asinh^2\left[\sqrt{\frac{\mu_j^2-\tfrac14\theta^2\delta_j^2}{1+\tfrac12\gamma\delta_j^2-\tfrac14\gamma^2(\theta^2+1)\delta_j^2}}\frac{\gamma\delta_j}{2}\right]U_j\,,
\label{Xeqeqpdediffa}\\&
\partial_tU_j=
\cK_0 U_j+\cK\left\{
\frac{4}{H^2}\asinh^2\left[\sqrt{\frac{\mu_j^2-\tfrac14\theta^2\delta_j^2}{1+\tfrac12\gamma\delta_j^2-\tfrac14\gamma^2(\theta^2+1)\delta_j^2}}\frac{\gamma\delta_j}{2}\right]
\right\}U_j\,,
\label{Xeqeqpdediffb}
\end{align}
\end{subequations}
where macroscale grid field~\(U_j(t)\in\UU\) is a local average of~\(u_j(t,x)\) about the centre of the \(j\)th element; that is, \(U_j(t):=\tfrac{1}{2\ell}\int_{X_j-\ell}^{X_j+\ell}w(x-X_j)u_j(t,x)\,dx\) for some \(\ell\leq H/2\) and some weight function~\(w\) (the macroscale field reduces to the mid-element value \(U_j(t):=u_j(t,X_j)\) as \(\ell\rightarrow 0\) and for weight \(w(0)=1\)).
\end{lemma}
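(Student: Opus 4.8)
The plan is to follow the template of \cref{lemwavemed}, replacing the first-order time-operator argument by its second-order analogue built on the transformed derivative~\(\cT\) of \cref{Xlemrpde}. First I would use \cref{Xlemrpde} to write the evolution as \(\cT u_j=\partial_x^2 u_j\), and then invoke the operator identity \(\delta_x=2\sinh(H\partial_x/2)\) (\cref{tblopids}). Since \(\delta_x^2=4\sinh^2(H\partial_x/2)=2\cosh(H\partial_x)-2\) and \(\cosh(H\partial_x)=\sum_{k\ge0}\tfrac{H^{2k}}{(2k)!}\cT^k\) is even in~\(\partial_x\), the spatial difference \(\delta_x^2\) may be rewritten purely as a \(\cT\)-operator, \(\delta_x^2=4\sinh^2(\tfrac H2\sqrt\cT)\), where the square root is a purely formal device: only even powers of \(\sqrt\cT\) ever appear, so this is a genuine power series in~\(\cT\). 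This is the diffusion counterpart of rewriting \(\delta_x\) as a \(\partial_t\)-operator in the wave proof.

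Next, for every \(p=0,1,2,\dots\), I would apply this rewriting to \(\partial_x^p u_j\), evaluate at the mid-element point \(x=X_j\), and substitute the coupling identity \cref{Xeqieccatdif} of \cref{Xlemcc1eocb}, which holds for every derivative \(\partial_x^p u_j\). This yields
\[
4\sinh^2\!\big(\tfrac H2\sqrt\cT\big)\partial_x^p u_j\big|_{X_j}
=\delta_x^2\partial_x^p u_j\big|_{X_j}
=\frac{\gamma^2(\mu_j^2-\tfrac14\theta^2\delta_j^2)\delta_j^2}{1+\tfrac12\gamma\delta_j^2-\tfrac14\gamma^2(\theta^2+1)\delta_j^2}\,\partial_x^p u_j\big|_{X_j}.
\]
As in \cref{lemwavemed}, I would premultiply by \((x-X_j)^p/p!\) and sum over all~\(p\); because \(u_j\in\CC_j=C^\omega(\XX_j)\) the sum reconstructs the Taylor series of \(u_j(x)\) about~\(X_j\) (with \((x-X_j)^p\) commuting with \(\partial_t\), \(\delta_j\) and \(\mu_j\)), giving the same operator identity for \(u_j(x)\) in an open interval about~\(X_j\). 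Finally, multiplying by the weight \(w(x-X_j)\) and integrating over \([X_j-\ell,X_j+\ell]\) (or simply setting \(x=X_j\) as \(\ell\to0\)) replaces \(u_j\) by the macroscale average~\(U_j\).

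It then remains to revert the \(\cT\)-operator. Writing \(\Phi\) for the right-hand operator, the averaged identity reads \(4\sinh^2(\tfrac H2\sqrt\cT)U_j=\Phi U_j\); taking the formal square root gives \(\sinh(\tfrac H2\sqrt\cT)U_j=\tfrac12\sqrt\Phi\,U_j\), and inverting the \(\sinh\) yields \(\cT U_j=\tfrac{4}{H^2}\asinh^2\!\big(\tfrac12\sqrt\Phi\big)U_j\). Because the numerator of \(\Phi\) is \(\gamma^2\delta_j^2\) times an even series in~\(\delta_j\) (using \(\mu_j^2=1+\tfrac14\delta_j^2\)) and its denominator is even, the factor \(\tfrac12\sqrt\Phi\) is again a well-defined series, odd in~\(\delta_j\), equal precisely to the bracketed argument of \cref{Xeqeqpdediffa}, which establishes \cref{Xeqeqpdediffa}. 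Equation \cref{Xeqeqpdediffb} then follows immediately by applying~\(\cK\) and adding \(\cK_0U_j\), since by definition \(\partial_t=\cK_0+\cK\cT\).

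The main obstacle, I expect, is not any single calculation but the careful bookkeeping of the formal operator square roots \(\sqrt\cT\) and \(\sqrt\Phi\): one must check that each reduces to an honest power series (even powers of \(\sqrt\cT\); \(\gamma\delta_j\) times an even series for \(\sqrt\Phi\)), that iterating the coupling \cref{Xeqieccatdif} through the even powers of \(\delta_x^2\) is legitimate (which is exactly why \cref{Xlemcc1eocb} is stated for all \(\partial_x^n u_j\), together with the smooth extension of \(u_j\) beyond element~\(j\)), and that the Taylor resummation is valid. All of these rest on the analyticity \(u_j\in C^\omega\) and on \(\delta_x\) commuting with the index-operators \(\delta_j,\mu_j\).
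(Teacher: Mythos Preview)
Your proposal is correct and follows essentially the same route as the paper's own proof: rewrite \(\delta_x^2\) as \(4\sinh^2(\tfrac H2\sqrt{\cT})\) via \cref{Xlemrpde}, apply this to every \(\partial_x^p u_j\) at \(x=X_j\) together with the coupling identity \cref{Xeqieccatdif}, resum the Taylor series using \(u_j\in C^\omega\), average against the weight~\(w\), and finally revert \(\sinh^2\) and then~\(\cT\) to obtain \cref{Xeqeqpdediffa} and \cref{Xeqeqpdediffb}. Your discussion of the formal square roots and the role of analyticity matches the paper's implicit bookkeeping.
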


\begin{proof} 
We adapt the proof of \cref{lemwavemed}.
Given~\eqref{Xeqieccatdif} of \cref{Xlemcc1eocb}, first consider the evolution equation~\eqref{Xeqrpde} in the elements (and smoothly extended from the elements): for every \(p=0,1,2,\ldots\)\,, \(\cT \partial_x^pu_j  =\partial_x^2\partial_x^pu_j =\tfrac4{H^2}\asinh^2(\delta_x/2)\partial_x^pu_j =\tfrac4{H^2}\asinh^2(\sqrt{\delta_x^2/4})u_j\)\,.
Upon inverting the operator \(\asinh^2(\sqrt{\cdot/4})\), this evolution equation is equivalent to \(4\sinh^2(H\sqrt{\cT}/2)\partial_x^p u_j=\delta_x^2\partial_x^pu_j\)\,.
Evaluating this form of the evolution equation at the element mid-point, and using~\cref{Xeqieccatdif}, gives
\begin{equation*}
4\sinh^2(H\sqrt{\cT}/2)\partial_x^p u_j\big|_{X_j}
=\delta_x^2 \partial_x^p u_j\big|_{X_j}
=\frac{\gamma^2(\mu_j^2-\tfrac14\theta^2\delta_j^2)\delta_j^2}{1+\tfrac12\gamma\delta_j^2-\tfrac14\gamma^2(\theta^2+1)\delta_j^2}\partial_x^p u_j\big|_{X_j}\,.
\end{equation*}
Premultiplying by \((x-X_j)^p/p!\) provides the \(p\)th term in a Taylor expansion of the smooth~\(u_j(t,x)\) about~\(x=X_j\) (the term~\((x-X_j)^p\) commutes with~\(\delta_t\), \(\delta_j\)~and~\(\mu_j\), but not with~\(\delta_x\) when \(p>0\)). 
Given \(u_j\in\CC_j\)\,, the Taylor series converges in some open interval containing~\(X_j\), and so summing over all~\(p\) \text{we obtain}
\begin{equation}
4\sinh^2(H\sqrt{\cT}/2)u_j(t,x)
=\frac{\gamma^2(\mu_j^2-\tfrac14\theta^2\delta_j^2)\delta_j^2}{1+\tfrac12\gamma\delta_j^2-\tfrac14\gamma^2(\theta^2+1)\delta_j^2}u_j(t,x)\label{XeqieccatdifAllx}
\end{equation}
in that interval.
Then multiply by the weight function~\(w(x-X_j)\) and integrate over the middle of the \(j\)th~element from \(x=X_j-\ell\) to~\(X_j+\ell\), and 
revert the \(\sinh(-H\cT/2c)\) operator, to deduce~\cref{Xeqeqpdediffa}; or for the averaging width \(\ell\rightarrow 0\)\,, set \(x=X_j\) and revert the \(\sinh\) operator to also deduce~\cref{Xeqeqpdediffa}.

Finally, from~\cref{Xeqeqpdediffa} revert the operator \(\cT=\cK^{-1}(\partial_t-\cK_0)\) to deduce~\eqref{Xeqeqpdediffb}.
\end{proof}

The above proof also shows that identity~\cref{Xeqieccatdif} holds for \(u_j\) evaluated at all \(x\in(X_j-\ell,X_j+\ell)\)\,, not just \(x=X_j\)\,. 
To prove this, in~\cref{XeqieccatdifAllx} substitute \(4\sinh^2(H\sqrt{\cT}/2) = \delta_x^2\) .
Similarly, the code of \cref{sechochd} shows that the identity holds for all~\(\xi\), not just \(\xi=1/2\)\,.

With these lemmas we now prove the consistency of the slow manifold discretisation.

\begin{theorem}\label{Xthmhdcudiff}
At full coupling \(\gamma=1\)\,, and for every~\(\theta\), the holistic discretisation~\eqref{Xeqeqpdediff} is consistent with the evolution equation~\eqref{Xeqdiffpde}.
\end{theorem}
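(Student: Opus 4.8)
The plan is to follow the template of the analogous wave result~\cref{thmhdcuwpde}: substitute full coupling \(\gamma=1\) into the spatially discrete dynamics~\cref{Xeqeqpdediffa}, show that the operator inside the \(\asinh^2\) collapses via the operator identities of \cref{tblopids} to exactly \(\tfrac12\delta_j\), and hence that the whole right-hand side reduces to \(\partial_x^2U_j\). By \cref{Xlemrpde} the resulting equation \(\cT U_j=\partial_x^2U_j\) is equivalent to the evolution equation~\cref{Xeqdiffpde}, which is precisely the asserted consistency.

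First I would set \(\gamma=1\) in the denominator beneath the square root of~\cref{Xeqeqpdediffa}, giving \(1+\tfrac12\delta_j^2-\tfrac14(\theta^2+1)\delta_j^2=1+\tfrac14\delta_j^2-\tfrac14\theta^2\delta_j^2\). The pivotal algebraic step is then to substitute the identity \(\mu_j^2=1+\tfrac14\delta_j^2\) from \cref{tblopids} and recognise that this denominator equals \(\mu_j^2-\tfrac14\theta^2\delta_j^2\), which is exactly the numerator. Thus at \(\gamma=1\) the ratio under the square root is the identity operator, its square root is the identity, and the argument of \(\asinh^2\) simplifies to \(\tfrac12\delta_j\) for every~\(\theta\).

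Next I would invoke \cref{remcom} to replace \(\delta_j\) by \(\delta_x\) when acting on the macroscale grid field \(U_j\), and use \(\delta_x=2\sinh(H\partial_x/2)\) (\cref{tblopids}) to evaluate \(\asinh(\delta_x/2)=\tfrac12H\partial_x\); squaring and multiplying by \(4/H^2\) then yields \(\tfrac4{H^2}\asinh^2[\tfrac12\delta_j]U_j=\partial_x^2U_j\), so that~\cref{Xeqeqpdediffa} becomes \(\cT U_j=\partial_x^2U_j\), the grid-field form of~\cref{Xeqrpde}. Applying the same collapse inside the braces of~\cref{Xeqeqpdediffb} and reverting the operator \(\cK\) gives directly \(\partial_tU_j=\cK_0U_j+\cK(\partial_x^2)U_j\), namely~\cref{Xeqdiffpde}, establishing consistency for every~\(\theta\).

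I do not expect a genuine obstacle: the argument is an exact algebraic collapse rather than an asymptotic estimate. The only point requiring care is verifying the clean numerator--denominator cancellation, and it is worth \emph{emphasising} that this cancellation is a structural consequence of the self-adjointness-preserving edge conditions~\cref{eqsiecc}---the \(\theta\)-dependent terms were arranged so that they coincide at \(\gamma=1\)---rather than an accident of the algebra. This is exactly what guarantees that the tuning parameter~\(\theta\) leaves the fully-coupled discrete model unchanged and consistent with~\cref{Xeqdiffpde}.
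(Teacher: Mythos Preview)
Your proposal is correct and follows essentially the same route as the paper's own proof: set \(\gamma=1\), use \(\mu_j^2=1+\tfrac14\delta_j^2\) to collapse the ratio under the square root to the identity so the argument of \(\asinh\) becomes \(\tfrac12\delta_j\), then invoke \(\asinh(\tfrac12\delta_j)=\tfrac12\partial_j\) (equivalently your \(\delta_j\to\delta_x\to H\partial_x\) route via \cref{remcom}) to obtain \(\cT U=\partial_x^2U\) and hence~\cref{Xeqdiffpde}. The only cosmetic difference is that the paper phrases the final step through a smooth interpolation \(U(t,x)\) and the identity \(\tfrac1H\partial_jU_j=\partial_xU\), which is equivalent to your use of \cref{remcom}.
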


For example, the holistic discretisation~\cref{eqgopdiff} is consistent with the simple diffusion \pde\ \(u_t=u_{xx}\)\,.

\begin{proof} 
Evaluating~\eqref{Xeqeqpdediffa} at full coupling \(\gamma=1\) gives (\cref{tblopids})
\begin{align*}
\cT U_j &=
\frac{4}{H^2}\asinh^2\left[\sqrt{\frac{\mu_j^2-\tfrac14\theta^2\delta_j^2}{1+\tfrac14\delta_j^2-\tfrac14\theta^2\delta_j^2}}\frac{\delta_j}{2}\right]U_j
\\&
=\frac{4}{H^2}\asinh^2\left[ {\tfrac12\delta_j} \right] U_j
=\frac1{H^2}\partial_j^2 U_j\,.
\end{align*}
Let \(U(t,x)\) be a smooth interpolation of~\(U_j(t)\), then \(\frac1H\partial_jU_j=\partial_xU\)\,, and so the above reduces to \(\cT U=\partial_x^2 U\)\,, that is, \(\cK^{-1}(\partial_t-\cK_0)U=\partial_x^2 U\)\,. 
By reverting the operator~\(\cK^{-1}\), it follows that \(\partial_tU=\cK_0 U+\cK(\partial_x^2) U\)\,.
Thus at \(\gamma=1\)\,, both equations in~\eqref{Xeqeqpdediff} are consistent to the evolution equation~\eqref{Xeqdiffpde} to all orders.  
\end{proof}

Of more practical interest is what happens when the inter-element coupling is analysed to some finite order in~\(\gamma\) in order to construct an holistic discretisation of some finite stencil width in space.

\begin{theorem}\label{Xthmfocdiff}
Consider constructing, to errors~\ord{\gamma^p}, the spatial discretisation of the evolution equation~\eqref{Xeqdiffpde} on elements coupled by~\cref{eqsiecc}. 
The resulting holistic discretisation (e.g., \cref{eqgopdiff} truncated to errors~\ord{\gamma^p}) is \begin{itemize}
\item a scheme of stencil width \((2p-1)\) on the macroscale grid, and
\item consistent with the general \pde~\eqref{Xeqdiffpde} to errors~\ord{\partial_x^p}.
\end{itemize}
\end{theorem}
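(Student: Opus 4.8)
The plan is to follow the proof of \cref{thmfocwave} almost verbatim, trading its single $\asinh$ for the $\asinh^2$ of \cref{Xeqeqpdediffa} and its coupling identity for \cref{Xeqieccatdif}. I would begin from the exact macroscale operator of \cref{Xlemdiffmed}, writing $R$ for the rational operator on the right-hand side of \cref{Xeqieccatdif}, and exploiting that $\asinh^2$ is an \emph{even} function to record
\begin{equation*}
\cT=\frac{4}{H^2}\asinh^2\!\Big(\tfrac12\sqrt{R}\,\Big)=\frac{1}{H^2}\Big[R-\tfrac1{12}R^2+\cdots\Big],\qquad R=\frac{\gamma^2(\mu_j^2-\tfrac14\theta^2\delta_j^2)\delta_j^2}{1+\tfrac12\gamma\delta_j^2-\tfrac14\gamma^2(\theta^2+1)\delta_j^2}.
\end{equation*}
The even-function identity is what removes the square root and leaves a genuine power series in $R$; this is the structural simplification that makes the subsequent bookkeeping tractable.

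The heart of the argument is a pairing of each $\gamma$ with a $\delta_j$. Writing the numerator of $R$ as $(\mu_j^2-\tfrac14\theta^2\delta_j^2)(\gamma\delta_j)^2$ and the denominator as $1+\tfrac12\delta_j(\gamma\delta_j)-\tfrac14(\theta^2+1)(\gamma\delta_j)^2$, every $\gamma$ sits inside a factor $(\gamma\delta_j)$; expanding the denominator as a geometric series and forming powers $R^k$ preserves this. For each monomial produced, let $g$ count its $\gamma$ factors, $d$ its $\delta_j$ factors, and $s$ its total number of half-shift factors ($\mu_j$ or $\delta_j$, each an $E_j^{\pm1/2}$). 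I would establish the two elementary, additive facts (i) $d\ge g$ (each $\gamma$ carries its own $\delta_j$) and (ii) $s\le2g$, by checking them on the numerator and the two denominator terms and noting both are preserved under products.

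Consistency then follows exactly as for the wave \pde. Constructing to errors~\ord{\gamma^p} discards precisely the monomials with $g\ge p$; by (i) these have $d\ge p$, so every retained $\delta_j$-power below~$p$ is complete and correct. Since \cref{Xthmhdcudiff} gives exact agreement $\cT=\partial_x^2$ at full coupling before truncation, setting $\gamma=1$ in the truncated operator reproduces $\partial_x^2$ up to the discarded terms, which are~\ord{\delta_j^p}. As $\partial_x\sim\delta_j$ on macroscale grid fields (\cref{remcom}), this is consistency to~\ord{\partial_x^p} for the discretisation of $\partial_x^2$; composing through the fixed operators $\cK_0$ and $\cK$ in \cref{Xeqeqpdediffb} cannot lower this order (the leading perturbation is $\cK_1\,\ord{\partial_x^p}=\ord{\partial_x^p}$), so the holistic discretisation of \cref{Xeqdiffpde} is consistent to~\ord{\partial_x^p}.

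For the stencil width I would retain terms up to $\gamma^{p-1}$, so that $g\le p-1$ and hence, by (ii), $s\le2(p-1)$; the largest attainable element shift is therefore $E_j^{\pm(p-1)}$, giving a stencil of width $2p-1$. To see this width is attained rather than merely bounded, I would exhibit one extreme monomial---the numerator factor $\mu_j^2(\gamma\delta_j)^2$ together with enough copies of the denominator term $\tfrac12\delta_j(\gamma\delta_j)$ to reach $g=p-1$---which realizes the maximal shift $E_j^{\pm(p-1)}$, and then check that its contribution does not cancel against the other extremal monomials; for simple diffusion ($\cK_0=0$ and $\cK$ the identity) this is the full stencil of \cref{eqgopdiff}. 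I expect the main obstacle to be precisely the clean propagation of (i) and (ii) through the power series in $R$: one must first dispatch the square root via the even-function identity, then verify that no accidental cancellation either raises the surviving $\delta_j$-order (which would only help consistency) or, more delicately, kills the extreme shift governing the stencil. The computer-algebra checks already underlying \cref{eqgopdiff,Xeqieccatdif} offer an independent verification of these counts.
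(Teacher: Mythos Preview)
Your proposal is correct and follows essentially the same route as the paper: both start from \cref{Xlemdiffmed}, pair each~$\gamma$ with a~$\delta_j$ in the rational operator~$R$ of \cref{Xeqieccatdif}, and count powers to deduce both the consistency order and the stencil width, then propagate through~$\cK$ via powers~$R^k$. Your bookkeeping via explicit counts $(g,d,s)$ and the even-function observation for $\asinh^2$ is slightly tidier than the paper's pattern-matching with~``$\sim$'', and you additionally check that the stencil width is \emph{attained} via an explicit extremal monomial---a point the paper leaves implicit.
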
 

\begin{proof} 
By \cref{Xlemdiffmed} the spatial discretisation of the evolution~\eqref{Xeqdiffpde} satisfies~\cref{Xeqeqpdediff}.
Consider macroscale solutions of~\eqref{Xeqeqpdediff} for which the differences~\(\delta_j\) are `small':
for such solutions, the Taylor series in~\(\delta_j\) is appropriate.
We expand \(\asinh\) in~\cref{Xeqeqpdediff} but, similarly to the proof of \cref{thmfocwave}, only retain the powers of \(\gamma\), \(\mu_j\) and \(\delta_j\), omitting all other details:
\begin{align*}
\asinh[\cdot]&\sim\asinh\left[\frac{\sqrt{(\mu_j^2-\delta_j^2)}(\gamma\delta_j)}{\sqrt{1+\delta_j(\gamma\delta_j)+(\gamma\delta_j)^2}}\right]
\\&\sim \sum_{m=0}^{\infty}(\mu_j^2-\delta_j^2)^{m+1/2}(\gamma\delta_j)^{2m+1}[1+\delta_j(\gamma\delta_j)+(\gamma\delta_j)^2]^{-m-1/2}
\\&\sim \sum_{m=0}^{\infty}(\mu_j^2-\delta_j^2)^{m+1/2}(\gamma\delta_j)^{2m+1}\sum_{n=0}^{\infty}\delta_j^{0:n}(\gamma\delta_j)^{n:2n} 
\\&\sim \sum_{m,n=0}^{\infty}(\mu_j^2-\delta_j^2)^{m+1/2}\delta_j^{0:n}(\gamma\delta_j)^{2m+n+1:2m+2n+1}\,.
\end{align*}
Thus, since \(\mu_j\sim\delta_j\) the \(\asinh^2\) in~\cref{Xeqeqpdediff} expands as
\begin{equation*}
\asinh^2[\cdot]
\sim \delta_j^2(\gamma\delta_j)^2 \left[ \sum_{m,n=0}^{\infty}\delta_j^{2m:2m+n}(\gamma\delta_j)^{2m+n:2m+2n}\right]^2.
\end{equation*}
First establish the theorem's properties for~\eqref{Xeqeqpdediffa}. 
For a given power~\(\delta_j^p\), every~\(\gamma\) exponent is~\(\leq p\).
Hence, truncating the analysis to errors~\ord{\gamma^p} ensures that the terms are all complete and correct for every power of~\(\delta_j\) less than~\(p\); that is, the error is~\ord{\delta_j^p}.
Since \(\partial_x\sim\delta_j\) for smooth fields, the consistency error in such analysis is~\ord{\partial_x^p}.

The highest power of~\(\gamma\) retained is \(p-1\) which could appear in the above sum as \(\gamma^{2+2(2m+n)}\)\,.
For such a power of \(\gamma\), the highest power of the difference is \(\delta_j^{2+2+2(2m+n)+2(2m+n)}=\delta_j^{2(p-1)}\)\,, which involves shifts of \((E_j^{\pm 1/2})^{2(p-1)}=E_j^{\pm(p-1)}\)\,.
That is, a construction to errors~\ord{\gamma^p} results in a macroscale discretisation with stencil width~\((2p-1)\).

Second, establish the theorem's properties for~\eqref{Xeqeqpdediffb}.
Denote the the right-hand side operator of~\eqref{Xeqeqpdediffa} as \(\cR=\frac{4}{H^2}\asinh^2(\cdots)\) so that~\eqref{Xeqeqpdediffb} is \(\partial_t U_j=\cK_0U_j+\cK\{\cR\}U_j\)\,, and recall that \(\cK(\cR)=\sum_{k=1}^\infty\cK_k \cR^k\)\,.
From the above analysis we have 
\begin{equation*}
\cR^k\sim\asinh^{2k}[\cdot]
\sim \delta_j^{2k}(\gamma\delta_j)^{2k} \left[ \sum_{m,n=0}^{\infty}\delta_j^{2m:2m+n}(\gamma\delta_j)^{2m+n:2m+2n}\right]^{2k}.
\end{equation*}
As before, for a given power~\(\delta_j^p\), every~\(\gamma\) exponent is~\(\leq p\) so truncating the  to errors~\ord{\gamma^p} ensures the error is~\ord{\delta_j^p} and the consistency error is~\(\ord{\partial_x^p}\).

When the highest power of~\(\gamma\) retained is~\(p-1\), then this power could appear in~\cK\ as \(\gamma^{2k+2k(2m+n)}\)\,, corresponding to the highest power of the difference operator \(\delta_j^{2k+2k+2k(2m+n)+2k(2m+n)}=\delta_j^{2(p-1)}\)\,.
As before, this involves shifts of \((E_j^{\pm 1/2})^{2(p-1)}=E_j^{\pm(p-1)}\)\,, and so we conclude that 
 a construction to errors~\ord{\gamma^p} results in a macroscale discretisation with stencil width~\((2p-1)\).
\end{proof}

\section{Application to accurate numerical homogenisation}
\label{sechhd}

Consider a general heterogeneous diffusion \pde, for field~\(u(t,x)\) in some spatial domain~\XX,
\begin{equation}
\D tu=\D x{}\left[\kappa(x)\D xu\right],
\label{eqhetdiff}
\end{equation}
where the diffusivity~\(\kappa(x)\) is \(d\)-periodic in~\(x\).
The microscale length~\(d\) is fixed. 
We do \emph{not} invoke the limit \(d\to0\)\,: \(d\)~is some constant that happens to be relatively small compared to the size~\fL\ of the the domain of interest.
Our approach and results here apply to the physically relevant case of a finite scale separation ratio~\(d/\fL\).
The results are \emph{not} restricted to the \text{mathematical limit \(d/\fL\to0\)\,.}

After some general considerations, we focus on the specific example case when the diffusivity is \(\kappa=1/(1+a\cos kx)\) for some amplitude~\(a\) of the heterogeneity.
For \(d\)-periodic heterogeneity, here the wavenumber of the microscale \text{is \(k:=2\pi/d\).}

\begin{figure}
\centering
\caption{\label{figensemble}`cylindrical' domain of the embedding \pde~\cref{eqembeddiff} for field \(\fu(t,\fx,z)\).  
Obtain solutions of the heterogeneous diffusion \pde~\cref{eqhetdiff} on the blue line as \(u_\phi(t,x)=\fu(t,x,x+\phi)\) for every constant phase~\(\phi\).}
\setlength{\unitlength}{0.01\textwidth}
\begin{picture}(92,19)
\put(0,5){\vector(1,0){92}}
\put(40,1){\(x,\fx\)}
\put(5,0){\vector(0,1){18}}
\put(2,16){\(z\)}
\put(2,12){\(d\)}\put(4,13){\line(1,0){2}}
\put(2,5){\(0\)}
\put(30,15){domain \(\XX\times[0,d)\)}
\put(5,5){\framebox(84,8){}}
{\color{blue}
\multiput(9,5)(8,0){10}{
  \put(0,0){\line(1,1){8}}
  \multiput(0,0)(0,2)4{\line(0,1){1}}
  }
  \put(9,13){\line(-1,-1){4}}
  \put(2,9){\(\phi\)}
}
\put(30,8){\(\fu (t,\fx,z)\)}
\end{picture}
\end{figure}
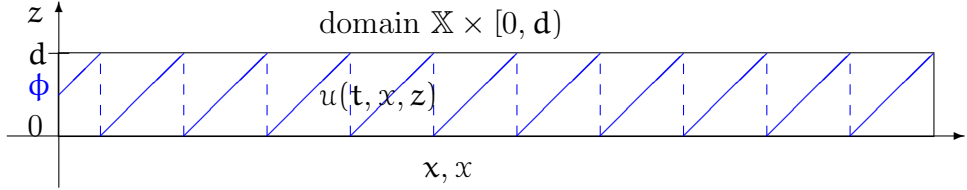

To make rigorous progress we embed the general heterogeneous diffusion \pde~\cref{eqhetdiff} in the family of all phase shifts of the \(d\)-periodic diffusion \cite[e.g.,][]{Roberts2013a, Roberts2016a}.
Consider the following \pde\ for a field \(\fu(t,\fx,z)\) in the `cylindrical' domain~\(\XX\times [0,d)\) (\cref{figensemble}):
\begin{align}
\partial_t\fu
&=(\partial_\fx+\partial_z)\big[\kappa(z)(\fu_\fx+\fu_z)\big] 
\nonumber\\&
=\partial_z[\kappa\partial_z]\fu 
+[2\kappa\partial_z+\kappa']\fu_\fx
+\kappa\fu_{\fx\fx}\,,
\label{eqembeddiff}
\end{align}
with a boundary condition of \(d\)-periodicity in~\(z\).
For every field~\(\fu(t,\fx,z)\) that satisfies \pde~\eqref{eqembeddiff}, define the field~\(u_\phi(t,x):=\fu(t,x,x+\phi)\) where the \(z\)-argument, \(x+\phi\), is taken modulo~\(d\) as indicated in \cref{figensemble}.
Then from the \pde~\cref{eqembeddiff}, the field~\(u_\phi\) satisfies the heterogeneous diffusion \pde\ \(\partial_tu_\phi=\partial_x[\kappa(x+\phi)\partial_xu_\phi]\)\,.
Hence solutions~\(\fu(t,\fx,z)\) of the embedding \pde~\cref{eqembeddiff} provide solutions of the original heterogeneous \pde~\cref{eqhetdiff} for every phase shift~\(\phi\) of the heterogeneity.
For the specific phase shift \(\phi=0\), the field~\(u_0(t,x)\) solves the \text{specific original \pde~\cref{eqhetdiff}.}

The crucial property of the embedding \pde~\cref{eqembeddiff} is that the \pde\ is homogeneous in the longitudinal coordinate~\(\fx\).
The heterogeneity in~\cref{eqembeddiff} appears only in the cross-section coordinate~\(z\).
Consequently, theory and techniques developed for the case of  systems homogeneous in~\(x\) apply to the \(\fx\)-dependence of the embedding \pde~\cref{eqembeddiff}, and thence to the heterogeneous diffusion~\cref{eqhetdiff}.
This phase-shift embedding also extends to nonlinear systems \cite[e.g., \S3.3][]{Roberts2013a}, but here we confine attention to linear homogenisation.

Specifically, \cref{Xthmfocdiff} indicates that a holistic discretisation constructed via inter-element coupling controlled by edge conditions~\eqref{eqsiecc}, and constructed to errors~\ord{\gamma^p}, is a spatially discrete scheme with a stencil width of~\((2p-1)\) that is consistent to the \emph{macroscale} dynamics of the heterogeneous diffusion~\cref{eqhetdiff} to errors~\ord{\partial_x^p}.
The theorem only ``indicates" because in order to apply, the theorem needs to address fields~\(u(t,x)\) in some Hilbert space of \text{the \(z\)-dependence.}

Now consider the specific case of homogenising the specific heterogeneous diffusion \(\kappa(x):=1/(1+a\cos kx)\)\,.
With inter-element edge conditions~\eqref{eqsiecc} and tuning \(\theta=0\)\,, \cref{sechdhdc} lists code that constructs the holistic discretisation, the numerical homogenisation, of the heterogeneous diffusion~\cref{eqhetdiff} in this case.
The code implicitly assumes~\(d\) divides evenly into the element length~\(H\) as otherwise various coded means are not correct.
For simplicity, we seek the results as a power series in both the coupling~\(\gamma\) and the amplitude~\(a\) \text{of the heterogeneity.}

In the ensemble of \cref{figensemble}, the flux in the \(\fx\)-direction (as distinct from the flux in the \(x\)-direction) from the ensemble \pde~\cref{eqembeddiff} is \(f=-2\kappa\fu_z-\kappa'\fu-\kappa\fu_{\fx}\)\,.
Since \(\fu\)~satisfies the edge condition~\cref{eqiecca}, then so does its derivative in the transverse direction~\(\fu_z\).
Hence, the flux edge condition~\cref{eqieccb}, upon dividing by~\(-\kappa(z)\), is satisfied by the derivative~\(\fu_\fx\).
This is the second of the two edge conditions \text{coded in \cref{sechdhdc}.}

\begin{subequations}\label{eqshetdiff}%
Executing the code of \cref{sechdhdc} finds that, to low-orders, the subgrid field 
\begin{align}
\fu_j&=U_j
+\gamma\left\{\xi-\tfrac12 +\frac a{kH}\sin kz \right\}\mu_j\delta_j U_j
\quad{}
+\gamma^2\left\{ \vphantom{\frac12}
(\tfrac1{12}-\tfrac12\xi+\tfrac12\xi^2)\delta_j^2
\right.\nonumber\\&\quad\left.{}
+(\tfrac18-\tfrac14\xi)\mu_j\delta_j^3
+(\tfrac1{48}-\tfrac18\xi+\tfrac18\xi^2)\delta_j^4
+\frac a{k^2H^2}\cos kz\,(\delta_j^2+\tfrac14\delta_j^4)
\right.\nonumber\\&\quad\left.{}
-\frac{a}{kH}\sin kz\,\left[
(\tfrac12-\xi)\delta_j^2
+\tfrac14\mu_j\delta_j^3
+(\tfrac18-\tfrac14\xi)\delta_j^4\right]
\right\}U_j
+\Ord{\gamma^3,a^3}
\label{equjhd}
\end{align}
This shows that the sub-element field has smooth macroscale structures in space through its \(\xi\)-dependence, structures that are modified by the heterogeneity via the microscale diffusivity-variations represented by terms in \(\sin kz\) and \(\cos kz\).
Since their coefficients involve divisions by wavenumber~\(k\), which are proportional to multiplication by the small microscale periodicity~\(d\), these trigonometric modifications to the field are relatively small.
Higher-order terms involve harmonics of these \text{trigonometric functions.}

Executing the code of \cref{sechdhdc} to higher-order errors we find that the corresponding evolution modifies the homogeneous case~\cref{eqgopdiff} to
\begin{align}
H^2\partial_tU_j&= \cdots\cref{eqgopdiff}\cdots
+\frac{a^2}{2k^2H^2}(\gamma^4-\gamma^5\delta_j^2)
\mu_j^4\delta_j^4U_j
+\Ord{\gamma^6,a^7},
\label{eqgophetdiff}
\end{align}
where the microscale heterogeneity only affects the macroscale evolution through  the magnitude~\(a\) and the wavenumber~\(k\) of the heterogeneity.
Alternatively, executing the code to errors~\Ord{\gamma^7,a^3}, and evaluated at full coupling \(\gamma=1\), we compute that this evolution has equivalent \pde
\begin{align}
\D tU&=\DD xU
+\frac{a^2}{2k^2}\Dn x4U
-\frac{2a^2}{k^4}\Dn x6U
+\Ord{a^2H^2d^4\partial_x^8U}.
\label{eqedehetdiff}
\end{align}
\end{subequations}
The leading-order homogenisation, the diffusion term~\(\DD xU\), is exactly the well-known correct harmonic mean of the diffusivity~\(\kappa(x)\). 
The higher-order terms,~\(\Dn x4U\), being divided by powers of~\(k=2\pi/d\), vanish in the usual theoretical limit of \(d\to0\)\,.
However, here our analysis is rigorous for finite~\(d\) and so these fourth and higher order terms quantify effects due to the physical finite scale separation of the macroscale from a finite sized microscale.   
These higher-order derivatives depend upon~\(a^2\) (recall that \(a\)~is the magnitude of the heterogeneity), as appropriate by symmetry in~\(a\).

Consequently, the holistic discretisation~\eqref{eqgophetdiff} is an accurate, analytically learnt, numerical homogenisation for the heterogeneous diffusion~\eqref{eqhetdiff}.

\paragraph{Numerical homogenisation for waves}
Wave propagation (\cref{secsmwpde}) through heterogeneous media can be analysed almost identically to the heterogeneous diffusion of this section.
The resultant numerical homogenisation would be~\eqref{eqgophetdiff} but with \(\partial_t^2U_j\) on the left-hand side.
Its equivalent \pde\ would be~\eqref{eqedehetdiff} but with \(\DD tU\) on the left.
Consequently, the numerical homogenisation would predict, through the fourth- and sixth-order differences\slash derivatives in~\eqref{eqshetdiff}, a wave speed dependence upon wavelength (wave dispersion) caused by the microscale heterogeneity at \text{finite scale separation.}

\paragraph{Spatial boundaries}
The homogenisation~\eqref{eqgophetdiff} is constructed with periodic diffusivity and is independent of the phase~\(\phi\) of the microscale heterogeneity.
Macroscale effects of the phase~\(\phi\) only arise via correctly determined influences of the boundary conditions on the macroscale domain.
The development of homogenisation for general boundary conditions is the subject of ongoing research. 
For spatial discretisations, such as~\eqref{eqgophetdiff}, one could develop correct discretisations near a boundary by adapting the arguments of \cite{Roberts01b, MacKenzie03}.

\section{Conclusion}

Fine-scale heterogeneity or nonlinearities both complicate the derivation of macroscale discretisation of \pde{}s, with many common methods failing to accurately account for the effects at the macroscale of subgrid physics. 
Holistic discretisations systematically construct macroscale closures which can accurately account for subgrid structures. 
In particular, for an accurate homogenisation of a \pde{}, the discretisation must maintain symmetries of \pde{}.
Here we developed a new holistic discretisation which is guaranteed to maintain the self-adjointness of a \pde{} through carefully crafted inter-element coupling conditions, thus ensuring that the general spectral structure of the \pde\  is captured by the discretisation.
For both generic 1D waves and diffusion, we show that the homogenisation is consistent with the original \pde{}.

In this article we focus on 1D systems, but simulations of 2D systems indicate that analogous discretisations are possible in 2D. 
Such 2D systems will be explored and theory developed in future research.

\paragraph{Acknowledgement}
This research was funded by the Australian Research Council under grants DP150102385 and DP200103097.  
We thank Peter Hochs for his comments on earlier versions of this article.

\appendix

\section{High-order consistency for holistic discretisation of advection-diffusion}
\label{sechochd}

This is script \verb|diffAdvecHolistic.tex|\quad
Learns the holistic discretisation of advection-diffusion on
elements with self-adjoint preserving coupling.  Find on
slow manifold both \(u\) and \(dU/dt\) linear in advection
\(c\), although the code must truncate in \(c\) in order to
converge.  All code is written in the computer algebra
package {Reduce}.\footnote{Reduce
[\url{http://reduce-algebra.com/}] is  a free, fast, general
purpose, computer algebra system.}
\begin{reduce}
on div; off allfac; on revpri; 
factor gamma,hh,c;
\end{reduce}
Subgrid structures are functions of sub-element variable
\(\xi=(x-L_j)/H\)
\begin{reduce}
depend xi,x;  let df(xi,x)=>1/hh;
\end{reduce}
Operator \verb|linv| solves \(u''=\rhs\) such that
\(u(1)=u(0)\) and mean zero.
\begin{reduce}
operator linv; linear linv;
let { linv(xi^~~p,xi)=>(xi^(p+2)-xi+1/2-1/(p+3))/(p+1)/(p+2)
    , linv(1,xi)=>(xi^2-xi+1/2-1/3)/2 };
\end{reduce}
Operator to compute mean over an element
\begin{reduce}
operator mean; linear mean;
let { mean(xi^~~p,xi)=>1/(p+1)
    , mean(1,xi)=>1 };
\end{reduce}
    
Parametrise discretisation slow manifold with evolving order
parameters \verb|uu(j)|
\begin{reduce}
operator uu;  depend uu,t; 
let df(uu(~k),t)=>sub(j=k,gj);
\end{reduce}
Initial approximation in \(j\)th element
\begin{reduce}
uj:=uu(j);  gj:=0;
\end{reduce}

Here specify required orders of errors in the result.  Intermediate working needs \(c\) truncated to some order
\begin{reduce}
let { gamma^3=>0, c^3=>0}; 
\end{reduce}
Deep iterative refinement learns emergent slow manifold
\begin{reduce}
R:= xi=1;  L:= xi=0;
for iter:=1:99 do begin
\end{reduce}
   Compute residuals for selected one of possible PDEs, the
   two coupling conditions, and the definition of the
   order-parameter macroscale variable.
\begin{reduce}
   pde:= -df(uj,t) + (if 1
     then -c*df(uj,x)+df(uj,x,2)
     else (sub(xi=xi+d,uj)-2*uj+sub(xi=xi-d,uj))/(d*hh)^2 );
   ucc:= -(1-gamma/2)*(sub(R,uj)-sub(L,uj))
     +gamma/2*(sub({L,j=j+1},uj)-sub({R,j=j-1},uj))
     +gamma*theta/2*(sub(R,uj)+sub(L,uj))
     -gamma*theta/2*(sub({L,j=j+1},uj)+sub({R,j=j-1},uj));
   ux:=df(uj,x);
   udc:= -(1-gamma/2)*(sub(R,ux)-sub(L,ux))
     +gamma/2*(sub({L,j=j+1},ux)-sub({R,j=j-1},ux))
     -gamma*theta/2*(sub(R,ux)+sub(L,ux))
     +gamma*theta/2*(sub({L,j=j+1},ux)+sub({R,j=j-1},ux));
   amp:=mean(uj,xi)-uu(j);
\end{reduce}
   Trace write lengths of residuals.
\begin{reduce}
   write lengthress:=map(length(~a),{pde,ucc,udc,amp});
\end{reduce}
   Update approximations from the current residuals
\begin{reduce}
   gj:=gj+(gd:=udc/hh-mean(pde,xi));
   uj:=uj-hh^2*linv(pde-gd,xi)+(xi-1/2)*ucc;
\end{reduce}
   Exit loop when residuals are zero to specified order of
   error
\begin{reduce}
   if {pde,ucc,udc,amp}={0,0,0,0} 
   then write "Success: ",iter:=100000+iter;
end;
if {pde,ucc,udc,amp}neq{0,0,0,0} then rederr("iteration fail");
\end{reduce}

Compute and report equivalent \pde\ of discrete
\begin{reduce}
in_tex "../convert2EquivalentDE.tex"$ 
\end{reduce}
Optionally check some identities, and exit script.
\begin{reduce}
in "diffTestProof.txt"$ 
end;
\end{reduce}

\subsection{Convert evolution to equivalent PDE}
\label{secceePDE}
All algorithms invoke this conversion script \verb|convert2EquivalentDE.tex|
\begin{reduce}
write "Convert evolution dUjdt=gj to equivalent PDE for
U(x,t) via operator form.  AJR, from a long time ago.";
\end{reduce}

Convert to central difference operator form
\begin{reduce}
rules:={ mu^2=>1+delta^2/4, uu(j)=>1
    , uu(j+~p)=>(1+sign(p)*mu*delta+delta^2/2)^abs(p)}$
gop:=(gj where rules);
uop:=(uj where rules)$
\end{reduce}

Convert to equivalent \pde\ using Taylor expansion
\begin{reduce}
remfac gamma; factor df;
let hh^10=>0;
depend uu,x;
rules:={uu(j)=>uu, uu(j+~p)=>uu+(for n:=1:10 sum 
               df(uu,x,n)*(hh*p)^n/factorial(n)) }$
duujdt:=(gj where rules);
duujdt1:=sub(gamma=1,duujdt);
\end{reduce}

\begin{reduce}
end;
\end{reduce}

\section{High-order consistency for holistic discretisation of simple wave PDE}
\label{sechochwave1}
This is script \verb|wave1sm.tex|\quad
Holistic discretisation of modified uni-directional
wave PDE, with speed c, on elements with `self-adjoint'
coupling.  The sub-element field is independent of speed c. For
error~\Ord{\gamma^p} gives consistency~\Ord{H^{p-1}}.  For linear
modifications measured by alpha, the slow manifold is linear
in alpha, but must truncate in alpha for iteration to
terminate!   

\begin{reduce}
on div; off allfac; on revpri; 
factor gamma,hh,c,alpha,c0,c2,c3,c4;
\end{reduce}
Subgrid structures are functions of sub-element variable
\(\xi=(x-L_j)/H\)
\begin{reduce}
depend xi,x;  let df(xi,x)=>1/hh;
\end{reduce}
Operator \verb|linv| solves \(u'=\rhs\) such that mean is zero.
\begin{reduce}
operator linv; linear linv;
let { linv(xi^~~p,xi)=>(xi^(p+1)-1/(p+2))/(p+1)
    , linv(1,xi)=>(xi-1/2) };
\end{reduce}
Operator to compute mean over an element
\begin{reduce}
operator mean; linear mean;
let { mean(xi^~~p,xi)=>1/(p+1)
    , mean(1,xi)=>1 };
\end{reduce}
    
Parametrise discretisation slow manifold with evolving order
parameters \verb|uu(j)|
\begin{reduce}
operator uu;  depend uu,t; 
let df(uu(~k),t)=>sub(j=k,gj);
\end{reduce}
Initial approximation in \(j\)th element
\begin{reduce}
uj:=uu(j);  gj:=0;
\end{reduce}

Here specify required orders of errors in the result. 
\begin{reduce}
let { gamma^6=>0, alpha=>0 };
\end{reduce}
Deep iterative refinement learns emergent slow manifold
\begin{reduce}
R:= xi=1;  L:= xi=0;
for iter:=1:99 do begin
\end{reduce}
   Compute residuals for selected one of possible PDEs, the
   coupling condition, and the definition of the
   order-parameter macroscale variable.
\begin{reduce}
   pde:= -df(uj,t) + (if 1
     then -c*df(uj,x)
       +alpha*(c0*uj+c2*df(uj,x,2)+c3*df(uj,x,3)+c4*df(uj,x,4))
     else -c*(sub(xi=xi+d,uj)-sub(xi=xi-d,uj))/(2*d*hh) );
   fx:=c*uj; the 'flux'
   ucc:=(1+theta)/2*( -sub(L,fx)
           +(1-gamma)*sub(R,fx) +gamma*sub({R,j=j-1},fx) )
       -(1-theta)/2*(-sub(R,fx)
           +(1-gamma)*sub(L,fx) +gamma*sub({L,j=j+1},fx) );           
   amp:=mean(uj,xi)-uu(j);
\end{reduce}
   Trace write lengths of residuals.
\begin{reduce}
   write lengthress:=map(length(~a),{pde,ucc,amp});
\end{reduce}
   Update approximations from the current residuals
\begin{reduce}
   gj:=gj+(gd:=ucc/hh+0*mean(pde,xi));
   uj:=uj+hh/c*linv(pde-gd,xi);
\end{reduce}
   Exit loop when residuals are zero to specified order of
   error
\begin{reduce}
   if {pde,ucc,amp}={0,0,0} 
   then write "Success: ",iter:=100000+iter;
end;
if {pde,ucc,amp}neq{0,0,0} then rederr("iteration fail");
\end{reduce}

Compute and report equivalent \pde, then exit script.
\begin{reduce}
in_tex "../convert2EquivalentDE.tex"$
end;
\end{reduce}

\section{Holistic discretisation for heterogeneous diffusion and its consistency}
\label{sechdhdc}
This is script \verb|hetDiffHolistic.tex|\quad
Holistic discretisation of heterogeneous diffusion on
elements with self-adjoint preserving coupling.  Here seek
discretisation as series in \(a\), the amplitude of the
heterogeneity.  
\begin{reduce}
on div; off allfac; on revpri; 
factor gamma,hh,a,k;
\end{reduce}
Subgrid structures are functions of sub-element variable
\(\xi=(x-L_j)/H\)
\begin{reduce}
depend xi,x;  let df(xi,x)=>1/hh;
depend xi,xz; depend z,xz;
\end{reduce}
Operator \verb|linv| solves \(u''=\rhs\) such that
\(u(1)=u(0)\), periodic in~\(z\), and mean zero.
\begin{reduce}
operator linv; linear linv;
let { linv(xi^~~p,xz)=>hh^2*(xi^(p+2)-xi+1/2-1/(p+3))/(p+1)/(p+2)
    , linv(1,xz)=>hh^2*(xi^2-xi+1/2-1/3)/2 
    , linv(sin(~~q*z),xz)=>-1/(q)^2*sin(q*z)
    , linv(cos(~~q*z),xz)=>-1/(q)^2*cos(q*z)
    , linv(sin(~~q*z)*xi^~~p,xz)=>-(xi^p-xi)/(q)^2*sin(q*z)
    , linv(cos(~~q*z)*xi^~~p,xz)=>-(xi^p-xi)/(q)^2*cos(q*z)
    };
\end{reduce}
Operator to compute mean over an element, these assume trig
functions fit in period
\begin{reduce}
operator mean; linear mean;
let { mean(1,xz)=>1 , mean(xi^~~p,xz)=>1/(p+1)
    , mean(cos(~~q*z),xz)=>0 , mean(xi^~~p*cos(~~q*z),xz)=>0 
    , mean(sin(~~q*z),xz)=>0 , mean(xi^~~p*sin(~~q*z),xz)=>0 
    };
\end{reduce}
    
Parametrise discretisation slow manifold with evolving order
parameters \verb|uu(j)|
\begin{reduce}
operator uu;  depend uu,t; 
let df(uu(~k),t)=>sub(j=k,gj);
\end{reduce}
Initial approximation in jth element
\begin{reduce}
uj:=uu(j);  gj:=0;
\end{reduce}

Iterative refinement to specified order of error, and using
Taylor series of~\(\kappa(x)\) in coefficient~\(a\)
\begin{reduce}
let { gamma^7=>0, a^3=>0 };
kappa:=for n:=0:deg((1+a)^9,a) sum (-a*cos(k*z))^n;
for iter:=1:99 do begin 
\end{reduce}
   Compute residuals for the PDE, the two coupling
   conditions, and the definition of the order-parameter
   macroscale variable.
\begin{reduce}
   flux:=kappa*(df(uj,x)+df(uj,z));
   pde:=trigsimp( -df(uj,t)+df(flux,x)+df(flux,z) 
       ,combine);
   ucc:=(1-gamma/2)*(-sub(xi=+1,uj)+sub(xi=0,uj))
   +gamma/2*(sub({xi=0,j=j+1},uj)-sub({xi=1,j=j-1},uj));
   ux:=df(uj,x);
   udc:=(1-gamma/2)*(-sub(xi=+1,ux)+sub(xi=0,ux))
   +gamma/2*(sub({xi=0,j=j+1},ux)-sub({xi=1,j=j-1},ux));
   amp:=mean(uj,xz)-uu(j);
\end{reduce}
   Trace write lengths of residuals.
\begin{reduce}
   write lengthress:=map(length(~a),{pde,ucc,udc,amp});
\end{reduce}
   Update approximations from the current residuals
\begin{reduce}
   gj:=gj+(gd:=mean(udc,xz)/hh+mean(pde,xz));
   uj:=uj-linv(pde-gd,xz)+(xi-1/2)*ucc;
\end{reduce}
   Exit loop when residuals are zero to specified order of
   error
\begin{reduce}
   if {pde,ucc,udc,amp}={0,0,0,0} 
   then write "Success: ",iter:=100000+iter;
end;
if {pde,ucc,udc,amp}neq{0,0,0,0} then rederr("iteration fail");
\end{reduce}

Compute and report equivalent \pde, then exit script.
\begin{reduce}
in_tex "../convert2EquivalentDE.tex"$
end;
\end{reduce}

%

\end{document}